\newcommand\version{May 1, 2023}
\newtheorem{theorem}{Theorem}[section]
\newtheorem{proposition}[theorem]{Proposition}
\newtheorem{lemma}[theorem]{Lemma}
\newtheorem{corollary}[theorem]{Corollary}
\theoremstyle{definition}
\newtheorem{definition}[theorem]{Definition}
\newtheorem{assumption}[theorem]{Assumption}
\theoremstyle{remark}
\newtheorem{remark}[theorem]{Remark}
\newtheorem{remarks}[theorem]{Remarks}
\numberwithin{equation}{section}
\newcommand{\C}{\mathbb{C}}
\renewcommand{\epsilon}{\varepsilon}
\newcommand{\F}{\mathcal{F}}
\newcommand{\loc}{{\rm loc}}
\newcommand{\N}{\mathbb{N}}
\renewcommand{\phi}{\varphi}
\newcommand{\R}{\mathbb{R}}
\newcommand{\Z}{\mathbb{Z}}
\newcommand{\floor}[1]{\left \lfloor{#1}\right \rfloor }
\DeclareMathOperator{\re}{Re}
\DeclareMathOperator{\spann}{span}
\DeclareMathOperator{\supp}{supp}
\DeclareMathOperator{\sgn}{sgn}
\def\bs{\mathbb{S}}
\def\ca{\mathcal{A}}
\def\cb{\mathcal{B}}
\def\ce{\mathcal{E}}
\def\ci{\mathcal{I}}
\def\ck{\mathcal{K}}
\def\cm{\mathcal{M}}
\def\Rd{{\mathbb{R}^d}}
\def\ga{\mathfrak{A}}
\def\gb{\mathfrak{B}}
\def\gE{\mathfrak{E}}
\newcommand{\me}[1]{\mathrm{e}^{#1}}
\newcommand{\one}{\mathbf{1}}
\newcommand\klm[1]{\textcolor{blue}{#1}}
\newcommand*{\rom}[1]{\expandafter\@slowromancap\romannumeral #1@}
\begin{document}

\title[Ground state representation --- \version]{Ground state representation for the fractional Laplacian with Hardy potential in angular momentum channels}

\author[K. Bogdan]{Krzysztof Bogdan}
\address[Krzysztof Bogdan]{Department of Pure and Applied Mathematics, Wroc\l aw University of Science and Technology, Hoene-Wro\'nskiego 13C, 50-376 Wroc\l aw, Poland}
\email{krzysztof.bogdan@pwr.edu.pl}

\author[K. Merz]{Konstantin Merz}
\address[Konstantin Merz]{Institut f\"ur Analysis und Algebra, Technische Universit\"at Braunschweig, Universit\"atsplatz 2, 38106 Braun\-schweig, Germany, and Department of Mathematics, Graduate School of Science, Osaka University, Toyonaka, Osaka 560-0043, Japan}
\email{k.merz@tu-bs.de}


\subjclass[2020]{Primary 47D08, 60J35}
\keywords{Hardy inequality, ground state representation, fractional Laplacian, angular momentum channel}

\date{\version}

\begin{abstract}
  Motivated by the study of relativistic atoms, we consider the Hardy operator $(-\Delta)^{\alpha/2}-\kappa|x|^{-\alpha}$ acting on functions of the form $u(|x|) |x|^{\ell} Y_{\ell,m}(x/|x|)$ in $L^2(\R^d)$,
  when $\kappa\geq0$ and $\alpha\in(0,2]\cap(0,d+2\ell)$. We give a ground state representation of the corresponding form on the half-line (Theorem~\ref{gstransformreformulated}). For the proof we use subordinated Bessel heat kernels.
\end{abstract}

\thanks{The first-named author was supported by grant 2017/27/B/ST1/01339 of National Science Centre, Poland.
  The second-named author was supported through the PRIME programme of the German Academic Exchange Service (DAAD) with funds from the German Federal Ministry of Education and Research (BMBF).
  We thank Volker Bach and Haruya Mizutani for helpful discussions.}

\maketitle
\setcounter{tocdepth}{1}
\tableofcontents

\section{Introduction and main result}

\subsection{Classic Hardy--Kato--Herbst inequalities and ground state representations}

We let $d\in\N:=\{1,2,\ldots\}$. For $\alpha\in(0,2)$, we consider the quadratic form
\begin{align}
  \label{eq:fractlaplacesingularintegral}
  \ce^{(\alpha)}[f]
  :=\frac12\int_{\R^d}\!\int_{\R^d} 
  |f(x)-f(y)|^2\nu(x,y)
  \,dy\,dx, \quad f\in L^2(\R^d),
\end{align}\index{$\ce^{(\alpha)}$}where
\begin{equation*}
  \nu(x,y) := \nu(x-y),\quad x,y\in \R^d,
\end{equation*}\index{$\nu(x,y)$}with
\begin{align}
  \label{eq:defaalphad}
  \nu(z):=\mathcal{A}_{d,-\alpha}|z|^{-d-\alpha}
  \quad 
  \text{and}
  \quad
  \mathcal{A}_{d,-\alpha}
  := \frac{2^{\alpha}\Gamma\big((d+\alpha)/2\big)}{\pi^{d/2}|\Gamma(-\alpha/2)|}.
\end{align}\index{$\ca_{d,-\alpha}$}There is an equivalent representation of $\ce^{(\alpha)}$ using operator semigroups. With
\begin{align}
  \label{eq:defusualheatkernel}
  P^{(\alpha)}(t,x,y) := \frac{1}{(2\pi)^d}\int_{\R^d}d\xi\, \me{-i\xi\cdot(x-y)-t|\xi|^\alpha} , \quad x,y\in\R^d,\,t>0,
\end{align}\index{$P^{(\alpha)}(t,x,y)$}we have
\begin{align}
  \label{eq:fractlaplaceheatkernel}
  \ce^{(\alpha)}[f]
  = \lim_{t\to0}\frac1t\langle f,(1-P^{(\alpha)}(t,\cdot,\cdot))f\rangle_{L^2(\R^d)}, \quad f\in L^2(\R^d).
\end{align}
Here $\langle f,g\rangle_{L^2(\R^d)}=\int_{\R^d}\overline{f(x)}g(x)\,dx$ and, as usual,
\begin{align*}
  (P^{(\alpha)}(t,\cdot,\cdot)f)(x) := \int_{\R^d} P^{(\alpha)}(t,x,y)f(y)\,dy, \quad t>0,\,x\in\R^d.
\end{align*}
Moreover, using the Fourier transform
\begin{align*}
  \hat f(\xi):=(\F f)(\xi):=(2\pi)^{-d/2}\int_{\R^d}\me{-ix\cdot\xi}f(x)\,dx,
\end{align*}\index{$\hat f$}\index{$\F$}initially defined on $L^1(\R^d)$, and extended to a unitary operator on $L^2(\R^d)$, we get
\begin{align}
  \label{eq:dirichletformalpha}
  \begin{split}
    \ce^{(\alpha)}[f]
    & = \int_{\R^d} |\hat f(\xi)|^2|\xi|^\alpha\,d\xi
      \quad \text{for all}\ f\in H^{\alpha/2}(\R^d),
  \end{split}
\end{align}
with the usual definition of Sobolev spaces
\begin{align}
  H^s(\R^d) := \left\{f\in L^2(\R^d):\,
  \int_{\R^d} |\xi|^{2s} |\hat f(\xi)|^2\,d\xi < \infty\right\}, \quad s\geq0.
\end{align}\index{$H^s(\R^d)$}We also consider the corresponding homogeneous Sobolev spaces $\dot H^s(\R^d)$\index{$\dot H^{s}(\R^d)$} for $s\geq0$, defined as the completion of $C_c^\infty(\R^d)$ under
$\||\xi|^s \hat f\|_{L^2(\R^d)}$.
When $\alpha=2$, we write
\begin{align}
  \label{eq:deflaplacequadform}
  \ce^{(2)}[f]:=\langle\nabla f,\nabla f\rangle_{L^2(\R^d)}, \quad f\in \dot H^1(\R^d).
\end{align}\index{$\ce^{(2)}$}Since each component of $D:=-i\nabla$ in $L^2(\R^d)$ is self-adjoint with domain $H^1(\R^d)$ and they commute, the spectral theorem ensures that $|D|^\alpha=(-\Delta)^{\alpha/2}$ with domain $H^\alpha(\R^d)$ is also self-adjoint. Moreover, $|D|^\alpha$ coincides with the Friedrichs extension corresponding to the quadratic form $\ce^{(\alpha)}$, initially defined on $C_c^\infty(\R^d)$. In particular, the kernel in \eqref{eq:defusualheatkernel} is just the integral kernel of $\me{-t|D|^\alpha}$.

\smallskip
Here and below we always assume that $\alpha\in(0,2]$. If $\alpha<d$, which is an additional restriction only for $d=1$ and $2$, then the classical Hardy--Kato--Herbst inequality in $\R^d$---in short, Hardy ine\-qua\-lity---asserts that the lower bound
\begin{align}
  \label{eq:hardy}
  \ce^{(\alpha)}[f]
  \geq \kappa \int_{\R^d} \frac{|f(x)|^2}{|x|^\alpha}\,dx
\end{align}
holds for all $f\in C_c^\infty(\R^d)$ if and only if
the coupling constant $\kappa$ satisfies
$\kappa\leq\kappa_{\rm c}^{(\alpha)}$ with
the critical coupling constant
\begin{align}
  \label{eq:defgammacellequalzero}
  \kappa_{\rm c}^{(\alpha)}
  := \frac{2^{\alpha} \Gamma \left((d+\alpha)/4\right)^2}{\Gamma \left((d-\alpha)/4\right)^2}.
\end{align}\index{$\kappa_{\rm c}^{(\alpha)}$}In particular, $\kappa_{\rm c}^{(2)}=(d-2)^2/4$.
Since $C_c^\infty(\R^d)$ is dense in $\dot H^{s}(\R^d)$ when $s\in(0,d/2)$ (see, e.g., \cite[Theorem~1.70]{Bahourietal2011}),
Hardy's inequality \eqref{eq:hardy} holds for all $\kappa\leq\kappa_{\rm c}^{(\alpha)}$ and $f\in \dot H^{\alpha/2}(\R^d)$. In fact, \eqref{eq:hardy} holds for $f\in L^2(\R^d)$ if $0<\alpha<\min\{2,d\}$. For references, see, e.g., \cite{Hardy1919,Hardy1920,Kato1966,Herbst1977,Kovalenkoetal1981,Yafaev1999,Franketal2008H,FrankSeiringer2008,Bogdanetal2016}.

One of the many proofs of \eqref{eq:hardy} uses what is customarily called the \emph{ground state representation}, that is, the decomposition
\begin{subequations}
  \label{eq:orggstransformtogether}
  \begin{align}
    \label{eq:orggstransform}
    \begin{split}
      \ce^{(\alpha)}[|x|^{-\sigma} \cdot f]
      & = \frac{2^{\alpha} \Gamma \left(\frac{d-\sigma}{2}\right) \Gamma\left(\frac{\alpha+\sigma}{2}\right)}{\Gamma \left(\frac{\sigma}{2}\right) \Gamma \left(\frac{d-\alpha-\sigma}{2}\right)} \int_{\R^d}\frac{||x|^{-\sigma} \cdot f(x)|^2}{|x|^\alpha}\,dx \\
      & \quad + \frac{1}{2}\iint\limits_{\R^d\times \R^d}\left|f(x)-f(y)\right|^2 \nu(x-y) (|x||y|)^{-\sigma}\,dx\,dy
    \end{split}
  \end{align}
  if $\alpha<\min\{2,d\}$ and
  \begin{align}
    \label{e.gs2}
    \ce^{(2)}[|x|^{-\sigma} \cdot f] = \sigma(d-\sigma-2)\int_{\R^d}\frac{||x|^{-\sigma} \cdot f(x)|^2}{|x|^2}\,dx + \int_{\R^d}\left|\nabla f(x)\right|^2\cdot |x|^{-2\sigma}\,dx
  \end{align}
\end{subequations}
if $\alpha=2<d$.
Here $\sigma\in[0,d-\alpha]$ and $f\in C_c^\infty(\R^d\setminus\{0\})$. In fact, for $\alpha<d$, by the density of $C_c^\infty(\R^d\setminus\{0\})$ in $\dot H^{\alpha/2}(\R^d)$ (see, e.g., \cite[Theorem~1.70]{Bahourietal2011}), Formulae~\eqref{eq:orggstransform}--\eqref{e.gs2} hold for all $f\in\dot H^{\alpha/2}(\R^d)$; see, e.g., \cite[Theorem~1.2]{FrankSeiringer2008} or \cite[Lemma~4.4]{Dipierroetal2016}.
For $\alpha=2$, the representation \eqref{e.gs2} has been known long since, see, e.g., \cite[p.~169]{ReedSimon1975} for a textbook treatment of $d=3$. For $d=3$ and $\alpha=1$, it was derived, e.g., by Michael Loss (unpublished notes, cf.~\cite[p.~935]{Franketal2008H}), while the general case was proved by Frank, Lieb, and Seiringer \cite [Proposition 4.1]{Franketal2008H} and, by different methods, by Bogdan, Dyda, and Kim \cite[Proposition~5]{Bogdanetal2016}, who also showed that \eqref{eq:orggstransform} holds for all $f\in L^2(\R^d)$. 

Note that the prefactors of the first term on the right-hand sides of \eqref{eq:orggstransform}--\eqref{e.gs2}, regarded as functions of $\sigma\in[0,d-\alpha]$, are symmetric about $\sigma=(d-\alpha)/2$, strictly increasing for $\sigma\in[0,(d-\alpha)/2]$ with maximal value $\kappa_c^{(\alpha)}$ at $\sigma=(d-\alpha)/2$, and vanish at $\sigma=0$.
Since the second term on the right sides of \eqref{eq:orggstransform}--\eqref{e.gs2} vanishes if $f=|x|^{-\sigma}$, we call the radial function $|x|^{-\sigma}$ a \emph{generalized ground state} for $|D|^\alpha-\Phi(\sigma)|x|^{-\alpha}$, although this abuses standard terminology because $|x|^{-\sigma}\notin L^2(\R^d)$.

As we mentioned, \eqref{eq:orggstransform}--\eqref{e.gs2} provide proofs of the Hardy inequality \eqref{eq:hardy} for $f\in C_c^\infty(\R^d\setminus\{0\})$. They also imply that the constant $\kappa=\kappa_{\rm c}$ in \eqref{eq:hardy} is optimal; see \cite[Remark~4.2]{Franketal2008H} or \cite{Bogdanetal2022}. Furthermore, \eqref{eq:orggstransform}--\eqref{e.gs2} reveal that equality in \eqref{eq:hardy} cannot hold  for nonzero functions for which both sides of \eqref{eq:hardy} are finite, see, e.g., \cite[Section~3]{Yafaev1999}.

\begin{remark}
  The ground state representations \eqref{eq:orggstransform}--\eqref{e.gs2} express the quadratic form of $|D|^\alpha-\Phi(\sigma)/|x|^\alpha$ in $L^2(\R^d)$ as the quadratic form of $|D|^\alpha$ in a weighted $L^2$ space with weight $|x|^{-\sigma}$. The astute reader may therefore discern in \eqref{eq:orggstransform}--\eqref{e.gs2} Doob's conditioning and Schr\"{o}dinger perturbations of $|D|^\alpha$; see \cite{Fitzsimmons2000,Bogdanetal2016}.
\end{remark}

\subsection{Angular momentum decomposition}
\label{ss:mainresult}
Our goal is to analyze forms
\begin{align}
  \label{eq:defhardyop}
  \ce^{(\alpha)}[f] - \kappa \int_{\R^d}\frac{|f(x)|^2}{|x|^\alpha}\,dx,
\end{align}
by taking into account the spherical symmetry of the operators $|D|^\alpha$ and $|x|^{-\alpha}$, i.e., the fact that they commute with the generator of rotations---sometimes called angular momentum operator---and therefore with its square, the Laplace--Beltrami operator on $L^2(\bs^{d-1})$. The latter is a non-negative operator with purely discrete spectrum and orthonormal eigenbasis consisting of spherical harmonics. The spherical symmetry allows to decompose $|D|^\alpha$ and $|x|^{-\alpha}$ into a direct sum of operators, each acting on radial functions multiplied by a spherical harmonic.

The main result of this paper is a ground state representation, in Theorem~\ref{gstransformreformulated} below, of the quadratic form \eqref{eq:defhardyop} for functions $f$ in a fixed angular momentum channel. To state the theorem, let us make our notions precise.
For $d\in\N$, we consider the orthonormal basis of (hyper)spherical harmonics in $L^2(\bs^{d-1})$ on the $(d-1)$-dimensional unit sphere $\bs^{d-1}$. For $d=1$, $\bs^0=\{-1,+1\}$, $L^2(\bs^0)$ is equipped with the counting measure, and the spherical harmonics are simply $Y_{0}(\omega)=2^{-1/2}$ and $Y_{1}(\omega)=2^{-1/2}\omega$, $\omega\in\{-1,+1\}$.
When $d=2$, they consist of $\me{\pm i \ell \omega}/\sqrt{2\pi}$ with $\ell\in\N_0$ and $\omega$ in the interval $[0,2\pi)$, which we identify with $\bs^1$. If $d\geq 3$ then the spherical harmonics are $\{Y_{\ell,m}\}_{\ell\in L_d,m\in M_{d,\ell}}$, with $L_d:=\N_0$ and $M_{3,\ell}:=\{-\ell,-\ell+1,...,\ell-1,\ell\}$, $M_{d,\ell} := \{(m_1,...,m_{d-2})\in\Z^{d-2}:\,|m_{d-2}|\leq \cdots\leq m_2\leq m_1\leq\ell\}$ for $d\geq4$; see, e.g., \cite[p.~34]{AveryAvery2018}.\index{$L_d$}\index{$M_\ell$} We extend the notation to $d=2$ by letting $L_2=\N_0$, $M_{2,0}=\{0\}$, $M_{2,\ell}=\{-1,+1\}$ for $\ell\in \N$, and $Y_{\ell,m}(\omega):=\me{i m \ell \omega}/\sqrt{2\pi}$, where $\omega\in[0,2\pi)$, $\ell\in L_2$, and $m\in M_{2,\ell}$. Finally, we let $L_1=\{0,1\}$ and $M_{1,0}=M_{1,1}=\{0\}$, just to enumerate $Y_0=:Y_{0,0}$ and $Y_1=:Y_{1,0}$ for $d=1$.
In the following, we write $M_\ell$ instead of $M_{d,\ell}$, if there is no danger of confusion.

For $\R_+=(0,\infty)$, $\ell\in L_d$, $m\in M_\ell$, and almost everywhere ($a.e.$) defined Borel function $u:\R_+\to \C$, we let
\begin{align}\index{$[u]_{\ell,m}$}
  \label{eq:defeqclassellm}
  [u]_{\ell,m}(x) := u(|x|) |x|^{\ell} Y_{\ell,m}(\omega_x),
  \quad \mbox{ for } a.e.\;\;  x\in\R^d\setminus\{0\}, 
\end{align}
\noindent
where $\omega_x=x/|x|$ for $x\in\R^d\setminus\{0\}$.\index{$\omega_x$} In what follows we usually skip the adverbs $a.e.$, according to the fact that we are mainly interested in equivalence classes of functions equal $a.e.$, in particular in the square-integrable ``functions'' on $\Rd$, $u\in L^2(\R^d)$. Here, as usual, if the reference measure is not detailed, we mean the Lebesgue measure.
Note that $[1]_{\ell,m}(x)$ are the (regular) solid harmonics $|x|^\ell Y_{\ell,m}(\omega_x)$---they are indeed harmonic, $\Delta[1]_{\ell,m}=0$.
Using polar coordinates, one observes that
\begin{align}
  \label{eq:isometry}
  \|[u]_{\ell,m}\|_{L^2(\R^d)} = \|u\|_{L^2(\R_+,r^{d-1+2\ell}dr)},
\end{align}
where $r^{d-1+2\ell}dr$ indicates the reference measure for the $L^2$ space on $\R_+$.
Let
\begin{equation}
  \label{eq:defvelll2}
  V_{\ell,m}:= \{ [u]_{\ell,m}:\, u\in L^2(\R_+,r^{d-1+2\ell}dr)\},
  \quad \ell\in L_d, \; m\in M_\ell,
\end{equation}\index{$V_{\ell,m}$}and 
\begin{equation}
  \label{eq:defvelll2t}
  V_{\ell}:= \spann\{ [u]_{\ell,m}:\, u\in L^2(\R_+,r^{d-1+2\ell}dr), \; m\in M_\ell\},
  \quad \ell\in L_d.
\end{equation}\index{$V_\ell$}Clearly, $V_{\ell,m}\subset V_{\ell}\subset L^2(\R^d)$.
We call $V_\ell$ \emph{angular momentum channel}, and say that functions in $V_{\ell}$ have \emph{angular momentum $\ell$}. For instance, for $d=1$, $V_0$ are the even and $V_1$ are the odd functions in $L^2(\R^1)$.
We will not introduce terminology for $V_{\ell,m}$, since $m\in M_\ell$ will play a rather minor role:
The values of $\ce^{(\alpha)}[[u]_{\ell,m}]$ and $\int_{\R^d}|[u]_{\ell,m}(x)|^2|x|^{-\alpha}\,dx$ are independent of $m\in M_{\ell}$, see Remarks~\ref{remarksmainresult} below.

Since the spherical harmonics form an orthonormal basis of $L^2(\bs^{d-1})$, we get
\begin{align}
  \label{eq:directsumvell}
  L^2(\R^d) = \bigoplus_{\ell\in L_d}V_{\ell}
  \qquad \text{and} \qquad
  V_\ell= \bigoplus_{m\in M_\ell}  V_{\ell,m}.
\end{align}
Accordingly, for every $f\in L^2(\R^d)$, there are unique \emph{coordinate functions} or \emph{coefficients} $f_{\ell,m}\in V_{\ell,m}$, $\ell\in L_d$, $m\in M_\ell$, such that
\begin{align}
  \label{eq:l2expansionsphericalharmonics}
  f = \sum_{\ell\in L_d}\sum_{m\in M_\ell}f_{\ell,m}, \qquad
  \|f\|_{L^2(\R^d)}^2 = \sum_{\ell\in L_d}\sum_{m\in M_\ell}\|f_{\ell,m}\|_{L^2(\R^d)}^2.
\end{align}
By \eqref{eq:defeqclassellm} and \eqref{eq:isometry}, we have
\begin{align}
  \label{eq:expressioncoordinatefunction}
  f_{\ell,m} = [u_{\ell,m}]_{\ell,m},
\end{align}
where
\begin{align}
  u_{\ell,m}(r) = r^{-\ell} \int_{\bs^{d-1}}d\omega\, \overline{Y_{\ell,m}(\omega)}\, f_{\ell,m}(r\omega).
\end{align}
Moreover, $\|f_{\ell,m}\|_{L^2(\R^d)}=\|u_{\ell,m}\|_{L^2(\R_+,r^{d-1+2\ell}dr)}<\infty$. The reader may use \eqref{eq:expressioncoordinatefunction} to reformulate \eqref{eq:l2expansionsphericalharmonics}.
The following lemma provides an \emph{angular momentum decomposition} of $\ce^{(\alpha)}[f]$ and quadratic forms corresponding to spherically symmetric multiplication operators, such as $|x|^{-\alpha}$. It says that these forms break down into forms corresponding to different spaces $V_{\ell,m}$.

\begin{lemma}
  \label{fourierbesselcor}
  Let $d\!\in\!\N$, $\alpha\!\in\!(0,2]$, and $f\!\in\! H^{\frac\alpha2}(\R^d)$ with the expansion \eqref{eq:l2expansionsphericalharmonics}. Then,
  \begin{subequations}
    \begin{align}
      & \ce^{(\alpha)}[f] = \sum_{\ell\in L_d}\sum_{m\in M_\ell}\ce^{(\alpha)}[f_{\ell,m}]
        \quad \text{and} \quad \\
      & \int_{\R^d}\frac{|f(x)|^2}{|x|^\alpha}\,dx
        = \sum_{\ell\in L_d}\sum_{m\in M_\ell}\int_{\R^d}\frac{|f_{\ell,m}(x)|^2}{|x|^\alpha}\,dx.
    \end{align}
  \end{subequations}
\end{lemma}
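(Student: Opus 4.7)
The plan is to show that both quadratic forms in the statement are block-diagonal with respect to the decomposition \eqref{eq:directsumvell}. The underlying reason is that both $(-\Delta)^{\alpha/2}$, viewed via \eqref{eq:dirichletformalpha} as a Fourier multiplier by $|\xi|^\alpha$, and multiplication by $|x|^{-\alpha}$ are rotation-invariant, so their matrix elements between distinct $V_{\ell,m}$ must vanish.

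For the Hardy potential term I would argue directly: writing $f_{\ell,m}=[u_{\ell,m}]_{\ell,m}$ and passing to polar coordinates,
\[
\int_{\R^d}\frac{f_{\ell,m}(x)\overline{f_{\ell',m'}(x)}}{|x|^\alpha}\,dx
=\Big(\int_0^\infty u_{\ell,m}(r)\overline{u_{\ell',m'}(r)}\, r^{d-1-\alpha+\ell+\ell'}\,dr\Big)\int_{\bs^{d-1}} Y_{\ell,m}\overline{Y_{\ell',m'}}\,d\omega,
\]
which vanishes for $(\ell,m)\ne(\ell',m')$ by orthonormality of spherical harmonics. Expanding $|f|^2$ over the decomposition and integrating against $|x|^{-\alpha}\,dx$ kills the cross terms; monotone convergence on the partial sums $\big|\sum_{\ell+|m|\leq N} f_{\ell,m}\big|^2$ then yields the identity in $[0,\infty]$.

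For the $\ce^{(\alpha)}$ identity I would invoke the Fourier representation \eqref{eq:dirichletformalpha}. The essential input is the classical Hecke--Bochner formula, which states that $\F$ maps each $V_{\ell,m}$ into itself, sending $[u]_{\ell,m}$ to $[\tilde u]_{\ell,m}$ for an explicit Hankel-type transform $u\mapsto\tilde u$. In particular $(\hat f)_{\ell,m}=\widehat{f_{\ell,m}}$ for every $\ell,m$, and the polar-coordinate computation of the previous paragraph, applied now on the Fourier side with radial weight $|\xi|^\alpha\,d\xi$, kills the cross terms in $\ce^{(\alpha)}$.

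The passage from finite truncations to the full series is the same argument applied with the radial weight $(1+|\xi|^\alpha)\,d\xi$: it gives $\|f\|_{H^{\alpha/2}}^2=\sum_{\ell,m}\|f_{\ell,m}\|_{H^{\alpha/2}}^2$, so each $f_{\ell,m}\in H^{\alpha/2}(\R^d)$ and $\sum_{\ell+|m|\leq N} f_{\ell,m}\to f$ in $H^{\alpha/2}(\R^d)$; continuity of $\ce^{(\alpha)}$ on $H^{\alpha/2}$ then passes the finite identity to the limit. The only nontrivial ingredient is the Hecke--Bochner step: for $d=1$ it reduces to the fact that $\F$ preserves parity, and for $d\ge 2$ it is a standard consequence of the Bessel-function expansion of $\me{-ix\cdot\xi}$ together with the addition theorem for spherical harmonics. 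Everything else is polar coordinates and orthonormality of $\{Y_{\ell,m}\}$.
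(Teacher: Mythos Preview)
Your proposal is correct and follows essentially the same route as the paper: the paper's one-line justification (``easily follows from the orthonormality of $\{Y_{\ell,m}\}$, see also Lemma~\ref{fourierbessel}'') points precisely to the polar-coordinates/Parseval argument you give for the Hardy term and to the Fourier--Bessel intertwining relation \eqref{eq:fourierbesselintertwine2} (your ``Hecke--Bochner formula'') for the kinetic term. Your treatment is in fact more careful than the paper's sketch, particularly in handling the passage from finite truncations to the full series via convergence in $H^{\alpha/2}$.
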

The statement easily follows from the orthonormality of $\{Y_{\ell,m}\}_{\ell\in L_d,m\in M_\ell}$ in $L^2(\bs^{d-1})$, see also Lemma~\ref{fourierbessel} below.

\smallskip
It is well known that there is no (non-trivial) Hardy inequality for the Laplacian when $d=1$ or $d=2$. However, the presence of angular momentum allows to prove non-trivial Hardy inequalities for $\alpha<d+2\ell$.
Accordingly, we will treat \\
(i) $\alpha\in(0,1)$ when $d=1$ and $\ell=0$, \\
(ii) $\alpha\in(0,2)$ when $d=2$ and $\ell=0$, \\
(iii) $\alpha\in(0,2]$ when $d=\ell=1$, or $d=2$ and $\ell\geq1$, or $d\geq3$ and $\ell\geq0$.

The following definition proposes a convenient parameterization of the coupling constant $\kappa$ in \eqref{eq:defhardyop}, suitable for dealing with \emph{generalized ground states in angular momentum channels $V_\ell$}.
\begin{definition}
  \label{defsigmagammaell}
  Let $d\in\N$ and $\ell\in L_d$. For $\alpha\in(0,\min\{2,d+2\ell\})$ we define
  \begin{align}
    \label{eq:defsigmagammal}
    \Phi_{d,\ell}^{(\alpha)}(\sigma) 
    := \frac{2^{\alpha} \Gamma\left(\frac{\ell+\sigma+\alpha}{2}\right) \Gamma \left(\frac{d+\ell-\sigma}{2}\right)}{\Gamma \left(\frac{\ell+\sigma}{2}\right) \Gamma \left(\frac{d+\ell-\sigma-\alpha}{2}\right)},
    \quad \sigma \in (-\ell-\alpha,d+\ell),
  \end{align}\index{$\Phi_{d,\ell}^{(\alpha)}(\sigma)$}and, considering the midpoint $\sigma=(d-\alpha)/2$, we let
  \begin{align}\index{$\kappa_{\rm c}^{(\alpha)}(\ell)$}
    \label{eq:defgammacell}
    \kappa_{\rm c}^{(\alpha)}(\ell) := \Phi_{d,\ell}^{(\alpha)}\left(\frac{d-\alpha}{2}\right)=\frac{2^{\alpha} \Gamma \left(\frac{1}{4} (d+2\ell+\alpha)\right)^2}{\Gamma \left(\frac{1}{4} (d+2\ell-\alpha)\right)^2}.
  \end{align}
  We also define, for $\alpha=2<d+2\ell$, 
  \begin{align}
    \Phi_{d,\ell}^{(2)}(\sigma):=(\ell+\sigma)(d+\ell-\sigma-2),\quad  \sigma\in \R,
  \end{align}
  and we let $\kappa_{\rm c}^{(2)}(\ell):=(d-2+2\ell)^2/4$.
\end{definition}

Here are some properties of the map $\sigma\mapsto\Phi_{d,\ell}^{(\alpha)}(\sigma)$, taking into account qualitative differences between the cases $\alpha=2$ and $\alpha<2$.

\begin{proposition}
  \label{propertiesphi}
  Let $d\in\N$, $\ell\in L_d$, $\alpha\in(0,2]\cap(0,d+2\ell)$, $S=\alpha$ if $\alpha<2$, and $S=\infty$ if $\alpha=2$. Let $\sigma\in(-\ell-S,d-\alpha+\ell+S)$.
  Then $\Phi_{d,\ell}^{(\alpha)}(\sigma)=\Phi_{d+2\ell,0}^{(\alpha)}(\sigma+\ell)$.
  The function $\sigma\mapsto\Phi_{d,\ell}^{(\alpha)}(\sigma)$ is symmetric about $\sigma=(d-\alpha)/2$,
  strictly increasing for $\sigma\leq(d-\alpha)/2$, and satisfies $\lim_{\sigma\searrow-\ell-S}\Phi_{d,\ell}^{(\alpha)}(\sigma)=-\infty$, $\Phi_{d,\ell}^{(\alpha)}(-\ell)=0$, $\Phi_{d,\ell}^{(\alpha)}((d-\alpha)/2)=\kappa_{\rm c}^{(\alpha)}(\ell)$.
  Moreover, for $\ell,\ell'\in L_d$ with $\ell>\ell'$ and $\sigma\in(-\ell'-S,(d-\alpha)/2]$,
  \begin{align}
    \label{eq:sigmagammaellneg}
    \Phi_{d,\ell}^{(\alpha)}(\sigma) > \Phi_{d,\ell'}^{(\alpha)}(\sigma).
  \end{align}
\end{proposition}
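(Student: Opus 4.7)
My plan is to separate the elementary statements from the substantive monotonicity claims, and to reduce the latter to the case $\ell=0$. The identity $\Phi_{d,\ell}^{(\alpha)}(\sigma)=\Phi_{d+2\ell,0}^{(\alpha)}(\sigma+\ell)$ follows by inspection of \eqref{eq:defsigmagammal}; the symmetry about $\sigma=(d-\alpha)/2$ is equally transparent, since $\sigma\mapsto d-\alpha-\sigma$ exchanges the two numerator $\Gamma$-factors with each other and the two denominator $\Gamma$-factors with each other. The evaluations $\Phi_{d,\ell}^{(\alpha)}(-\ell)=0$ and $\Phi_{d,\ell}^{(\alpha)}((d-\alpha)/2)=\kappa_{\rm c}^{(\alpha)}(\ell)$ follow from the pole of $\Gamma$ at $0$ and from Definition~\ref{defsigmagammaell}, respectively. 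For $\alpha<2$ and $\sigma\searrow-\ell-\alpha$, $\Gamma((\sigma+\ell+\alpha)/2)\to+\infty$, while among the remaining (finite) factors only $\Gamma((\sigma+\ell)/2)\to\Gamma(-\alpha/2)<0$ carries a sign, so the limit is $-\infty$. For $\alpha=2$ the function is the polynomial $(\ell+\sigma)(d+\ell-\sigma-2)$ and every claim is elementary.

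To handle the remaining assertions for $\alpha<2$ I set $D=d+2\ell$, $\tau=\sigma+\ell$ and factorise
\[
  \Phi_{D,0}^{(\alpha)}(\tau)=2^{\alpha}\,P(\tau/2)\,P((D-\tau-\alpha)/2),\qquad P(x):=\Gamma(x+\alpha/2)/\Gamma(x),
\]
which again makes the symmetry manifest. The auxiliary function $P$ vanishes at $0$, is positive on $(0,\infty)$ and negative on $(-\alpha/2,0)$, and has logarithmic derivative $h(x):=\psi(x+\alpha/2)-\psi(x)$, where $\psi=\Gamma'/\Gamma$. I shall show that $P$ is strictly increasing on $(-\alpha/2,\infty)$: on $(0,\infty)$, $h>0$ because $\psi$ is strictly increasing; on $(-\alpha/2,0)$, the recurrence $\psi(x)=\psi(x+1)-1/x$ together with $\alpha/2\le 1$ gives $\psi(x)>\psi(x+1)\ge\psi(x+\alpha/2)$, so $h<0$ and $|P|$ strictly decreases to $0$.

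With $x_1=\tau/2$ and $x_2=(D-\tau-\alpha)/2$, which move at opposite rates as $\sigma$ varies, the logarithmic derivative of $\Phi_{D,0}^{(\alpha)}(\tau)$ equals $\tfrac12(h(x_1)-h(x_2))$. For $\tau>0$ both $x_1,x_2>0$ and $\Phi_{D,0}^{(\alpha)}(\tau)>0$; concavity of $\psi$ on $(0,\infty)$ makes $h$ strictly decreasing there, so $h(x_1)>h(x_2)$ precisely when $x_1<x_2$, i.e.\ when $\tau<(D-\alpha)/2$. For $\tau\in(-\alpha,0)$ we have $x_1\in(-\alpha/2,0)$ and $x_2>0$, whence $h(x_1)<0<h(x_2)$ and $\Phi_{D,0}^{(\alpha)}(\tau)<0$, again forcing strict increase. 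Continuity at $\tau=0$ (where $\Phi_{D,0}^{(\alpha)}=0$) joins the two regions and yields strict monotonicity on $(-\alpha,(D-\alpha)/2]$.

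The strict inequality \eqref{eq:sigmagammaellneg} is the main obstacle. After the same reduction it becomes $\Phi_{D,0}^{(\alpha)}(\tau)>\Phi_{D',0}^{(\alpha)}(\tau')$ with $D-D'=2(\ell-\ell')$ and $\tau-\tau'=\ell-\ell'>0$; now both $x_1,x_2$ exceed $x_1',x_2'$ by $(\ell-\ell')/2$. Since $P$ is strictly increasing and $x_2,x_2'>0$, every sign combination except $x_1,x_1'\in(-\alpha/2,0)$ is immediate. In that remaining case both quantities are negative and
\[
  \log|\Phi_{D,0}^{(\alpha)}(\tau)|-\log|\Phi_{D',0}^{(\alpha)}(\tau')|=\int_0^{(\ell-\ell')/2}\bigl[h(x_1'+s)+h(x_2'+s)\bigr]\,\rd s,
\]
so everything hinges on the key inequality $h(a)+h(b)<0$ for $a\in(-\alpha/2,0)$, $b>0$, $a+b\ge 0$. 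I prove it by noting that $b\mapsto h(a)+h(b)$ is strictly decreasing on $(0,\infty)$ (its derivative $\psi'(b+\alpha/2)-\psi'(b)<0$ because $\psi'$ is strictly decreasing), which reduces matters to $b=|a|=:u\in(0,\alpha/2)$. Using $\psi(u)=\psi(1+u)-1/u$ and $\psi(-u)=\psi(1-u)+1/u$ to cancel the $\pm1/u$ singular terms yields
\[
  h(u)+h(-u)=\bigl[\psi(\tfrac{\alpha}{2}+u)+\psi(\tfrac{\alpha}{2}-u)\bigr]-\bigl[\psi(1+u)+\psi(1-u)\bigr],
\]
which is negative because $c\mapsto\psi(c+u)+\psi(c-u)$ has positive derivative $\psi'(c+u)+\psi'(c-u)$ on $\{c>u\}$ and $\alpha/2<1$.
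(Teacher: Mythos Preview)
Your argument is correct. Both you and the paper start from the reduction $\Phi_{d,\ell}^{(\alpha)}(\sigma)=\Phi_{d+2\ell,0}^{(\alpha)}(\sigma+\ell)$, but then diverge. The paper delegates the basic properties of $\sigma\mapsto\Phi_{D,0}^{(\alpha)}(\sigma)$ (symmetry, strict increase, limits) to the literature and proves \eqref{eq:sigmagammaellneg} by combining that $\sigma$-monotonicity with a separate monotonicity in the dimension parameter, the latter reduced to the single observation that $x\mapsto\Gamma(x)/\Gamma(x-a)$ is increasing on $(a,\infty)$, verified via the series for $\psi$. You instead factor $\Phi_{D,0}^{(\alpha)}(\tau)=2^{\alpha}P(\tau/2)P((D-\tau-\alpha)/2)$ with $P(x)=\Gamma(x+\alpha/2)/\Gamma(x)$ and observe that passing from $\ell'$ to $\ell$ shifts \emph{both} arguments of $P$ up by the same amount $(\ell-\ell')/2$, so the monotonicity of $P$ settles all cases simultaneously. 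What this buys you is a fully self-contained treatment: you derive the $\sigma$-monotonicity from scratch via concavity of $\psi$, and you give an explicit resolution of the genuinely delicate regime $\tau,\tau'\in(-\alpha,0)$ (which occurs, e.g., for $\ell=\ell'+1$ and $\alpha>1$) by your digamma inequality $h(a)+h(b)<0$. The paper's two-step chain is slicker in the generic positive range but leans on the cited references for the full interval; your symmetric-shift argument is more uniform and leaves no case implicit.
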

Note that $\kappa_{\rm c}^{(\alpha)}=\kappa_{\rm c}^{(\alpha)}(0)$ and $\Phi_{d,0}^{(\alpha)}(\sigma)$ is the prefactor in the first term on the right-hand sides of \eqref{eq:orggstransform}--\eqref{e.gs2}.

\begin{proof}
  Since $\Phi_{d,\ell}^{(\alpha)}(\sigma)=\Phi_{d+2\ell,0}^{(\alpha)}(\sigma+\ell)$ and the right-hand side is well-studied, e.g., in \cite[Lemma~3.2]{Franketal2008H}, \cite[p.~237]{Bogdanetal2016} and \cite[p.~1004---1005]{JakubowskiWang2020}, we only prove the monotonicity statement in \eqref{eq:sigmagammaellneg} when $\alpha<2$. To this end, we use $\Phi_{d,\ell}^{(\alpha)}(\sigma)=\Phi_{d+2\ell,0}^{(\alpha)}(\sigma+\ell)$ and the monotonicity of the functions $\sigma\mapsto\Phi_{d,0}^{(\alpha)}(\sigma)$ and $d\mapsto\Phi_{d,0}^{(\alpha)}(\sigma)$ for all $-\alpha<\sigma\leq(d-\alpha)/2$. As mentioned above, the former monotonicity is well known. To prove the latter, it suffices to show that, for arbitrary $a>0$, the function $(a,\infty)\ni x\mapsto \frac{\Gamma(x)}{\Gamma(x-a)}$ is increasing. But this follows from
  \begin{align*}
    \frac{d}{dx}\left(\log \frac{\Gamma(x)}{\Gamma(x-a)}\right)
    = \frac{\Gamma'(x)}{\Gamma(x)}-\frac{\Gamma'(x-a)}{\Gamma(x-a)}
    = \sum_{k\geq0}\left(\frac{1}{x-a+k}-\frac{1}{x+k}\right) > 0,
  \end{align*}
  where we used \cite[(5.7.6)]{NIST:DLMF}. This concludes the proof of Proposition~\ref{propertiesphi}.
\end{proof}

Since we regard $d\in\N$ and $\alpha\in(0,2]\cap(0,d+2\ell)$ as given, we shall write $\ce$\index{$\ce$}, $P(t,x,y)$\index{$P(t,x,y)$}, $\Phi_\ell(\sigma)$\index{$\Phi_\ell(\sigma)$}, $\kappa_{\rm c}(\ell)$\index{$\kappa_{\rm c}(\ell)$}, and $\kappa_{\rm c}$\index{$\kappa_{\rm c}$} instead of $\ce^{(\alpha)}$, $P^{(\alpha)}(t,x,y)$, $\Phi_{d,\ell}^{(\alpha)}(\sigma)$, $\kappa_{\rm c}^{(\alpha)}(\ell)$, and $\kappa_{\rm c}^{(\alpha)}$, respectively, if there is no danger of confusion. When $\ell=0$, we further abbreviate $\Phi(\sigma):=\Phi_0(\sigma)$\index{$\Phi(\sigma)$}.

\subsection{Main result}
Proposition~\ref{propertiesphi} implies, in particular, that for each $\kappa\leq\kappa_{\rm c}(\ell)$ there is a unique number $\sigma \leq (d-\alpha)/2$ such that $\kappa=\Phi_\ell(\sigma)$. Therefore, from now on, instead of \eqref{eq:defhardyop}, we mostly consider
\begin{align}
  \label{eq:hardyforml}
  \ce[f] - \Phi_\ell(\sigma) \int_{\R^d}\frac{|f(x)|^2}{|x|^\alpha}\,dx, \quad f\in C_c^\infty(\R^d\setminus\{0\}),
\end{align}
where $\ell\in L_d$, $\alpha\in(0,2]\cap(0,d+2\ell)$, and $\sigma\leq(d-\alpha)/2$. In fact, although the case of $\sigma<-\ell$ (corresponding to $\Phi_\ell(\sigma)<0$) is also of much interest (see, e.g., \cite{JakubowskiWang2020,Choetal2020}), we will exclusively deal with the \emph{attractive}, that is, positive (or rather \emph{non-repulsive}, that is, non-negative) coupling constants; more precisely, from now on we assume $\sigma\in[-\ell,(d-\alpha)/2]$, i.e., $\Phi_\ell(\sigma)\in[0,\kappa_c(\ell)]$.

\smallskip
Our main result is the following ground state representation or Hardy identity for the quadratic form \eqref{eq:hardyforml} in $V_{\ell,m}$.

\begin{theorem}
  \label{gstransformreformulated}
  Let $d\in\N$, $\ell\in L_d$, $m\in M_\ell$, $\alpha\in(0,2]\cap(0,d+2\ell)$, $\sigma\in [-\ell,(d-\alpha)/2]$, $h(r):=r^{-\ell-\sigma}$, and $u\in C_c^\infty(\R_+)$.
  For $\alpha<2$, we let $\nu_{\frac{d-1}{2}+\ell}(r,s)>0$ be as defined in Proposition~\ref{potentialexpansionsphericalharmonics} below. Then
  \begin{align}
    \label{eq:nuellmainthm}
    \nu_{\frac{d-1}{2}+\ell}(r,s)=(rs)^{-\ell}\iint_{\bs^{d-1}\times\bs^{d-1}}\overline{Y_{\ell,m}(\omega_x)}Y_{\ell,m}(\omega_y)\nu(r\omega_x,s\omega_y)\,d\omega_x\,d\omega_y
  \end{align}\index{$\nu_{\frac{d-1}{2}+\ell}(r,s)$}and we let
  \begin{subequations}
    \label{eq:defiellsigma}
    \begin{align}
      \label{eq:defiellsigmaalphal2}
      \ci_{\ell,\sigma}[u]
      & := \frac{1}{2}\iint_{\R_+\times \R_+} \left|u(r) - u(s)\right|^2 \, \nu_{\frac{d-1}{2}+\ell}(r,s)\, h(r)h(s)\, (rs)^{d-1+2\ell}\,dr\,ds.
    \end{align}
    For $\alpha=2$, we let
    \begin{align}
      \ci_{\ell,\sigma}[u]
      & := \int_{\R_+}|u'(r)|^2h(r)^2r^{d-1+2\ell}dr.
    \end{align}
  \end{subequations}\index{$\ci_{\ell,\sigma}$}Then,
  \begin{align}
    \label{eq:gstransformreformulatedcor}
    \ce[[h\cdot u]_{\ell,m}]
    & = \ci_{\ell,\sigma}[u]
      + \Phi_\ell(\sigma)\int_{\R_+} \frac{|h(r) \cdot u(r)|^2}{r^\alpha}\,r^{d-1+2\ell}\,dr.
  \end{align}
\end{theorem}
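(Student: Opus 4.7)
The plan is to reduce the $d$-dimensional form $\ce[[h\cdot u]_{\ell,m}]$ to a one-dimensional form on $\R_+$ by integrating out the angular variables, and then to mimic on the half-line the algebraic rearrangement behind the classical ground state representation \eqref{eq:orggstransform}. Writing $H(r):=h(r)r^{\ell}=r^{-\sigma}$, so that $[h\cdot u]_{\ell,m}(x)=H(|x|)u(|x|)Y_{\ell,m}(\omega_x)$, I would use Proposition~\ref{potentialexpansionsphericalharmonics} together with the orthonormality of $\{Y_{\ell,m}\}$ and the rotation invariance of $\nu$ to rewrite, at least in a principal-value sense,
\begin{align*}
\ce[[h\cdot u]_{\ell,m}]
=\tfrac12\iint_{\R_+\times\R_+}|H(r)u(r)-H(s)u(s)|^2\,\nu_{\frac{d-1}{2}+\ell}(r,s)\,(rs)^{d-1+2\ell}\,dr\,ds,
\end{align*}
the angular integration producing exactly the kernel from \eqref{eq:nuellmainthm} (with a residual ``diagonal'' contribution that combines with the one-variable cross term produced in the next step).

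Next I would apply the elementary real-valued identity
\begin{align*}
|Hu-H'u'|^2=HH'|u-u'|^2+(H-H')\bigl(Hu^2-H'(u')^2\bigr),
\end{align*}
with $H=H(r),H'=H(s),u=u(r),u'=u(s)$. The $HH'|u-u'|^2$ piece integrated against the kernel yields $\ci_{\ell,\sigma}[u]$, since $H(r)H(s)=h(r)h(s)(rs)^{\ell}$ matches \eqref{eq:defiellsigmaalphal2}. After symmetrization in $(r,s)$, the remaining piece collapses to
\begin{align*}
\int_{\R_+}|u(r)|^2 H(r)\biggl(\int_{\R_+}(H(r)-H(s))\,\nu_{\frac{d-1}{2}+\ell}(r,s)\,s^{d-1+2\ell}\,ds\biggr)\,r^{d-1+2\ell}\,dr,
\end{align*}
and so the proof reduces to showing that the inner integral equals $\Phi_\ell(\sigma)\,r^{-\sigma-\alpha}=\Phi_\ell(\sigma)H(r)/r^{\alpha}$.

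This inner identity is the half-line manifestation of the generalized eigenfunction relation $|D|^{\alpha}[|x|^{-\sigma}Y_{\ell,m}]=\Phi_\ell(\sigma)|x|^{-\sigma-\alpha}Y_{\ell,m}$. Following the strategy announced in the abstract, I would derive it by subordination: restricted to the channel $V_{\ell,m}$, the semigroup $\me{-t|D|^{\alpha}}$ is the Bochner subordination in $t$ of the Bessel heat semigroup generated by $\tilde L:=-r^{1-d-2\ell}\partial_r(r^{d-1+2\ell}\partial_r)$ on $\R_+$. For $\tilde L$ the eigenvalue identity $\tilde L h=\Phi_\ell^{(2)}(\sigma)h/r^2$ with $h(r)=r^{-\ell-\sigma}$ is a direct calculation, and subordination then transports it to the required $r^{-\sigma-\alpha}$-identity, with the Gamma-function combinatorics in \eqref{eq:defsigmagammal} emerging from the subordinator moments.

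The $\alpha=2$ case can bypass subordination: from $-\Delta[h\cdot u]_{\ell,m}=[\tilde L(hu)]_{\ell,m}$ and the product rule $\tilde L(hu)=(\tilde L h)u+h\tilde L u-2h'u'$, together with $\tilde L h=\Phi_\ell^{(2)}(\sigma)h/r^2$, integration by parts in $r$ on $\R_+$ produces \eqref{eq:gstransformreformulatedcor}; boundary terms vanish since $u\in C_c^\infty(\R_+)$. The main obstacle lies in the $\alpha<2$ case: one must carry out the angular reduction and the principal-value eigenvalue identity simultaneously and rigorously, tracking the singular behavior of $\nu_{\frac{d-1}{2}+\ell}(r,s)$ near $r=s$ inherited from $\nu$. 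This is where the subordinated Bessel heat kernel becomes indispensable, because it gives an explicit formula for $\nu_{\frac{d-1}{2}+\ell}$ in which the cancellations underlying both the angular reduction and the eigenvalue identity are transparent and uniform in $\alpha\in(0,2)$.
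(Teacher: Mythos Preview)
Your first displayed identity is where the approach breaks down. With $H(r)=r^{-\sigma}$ and $f(x)=H(|x|)u(|x|)Y_{\ell,m}(\omega_x)$, direct angular integration of $\tfrac12\iint|f(x)-f(y)|^2\nu(x-y)\,dx\,dy$ does \emph{not} give $\tfrac12\iint|Hu(r)-Hu(s)|^2\nu_{\frac{d-1}{2}+\ell}(r,s)(rs)^{d-1+2\ell}\,dr\,ds$. By \eqref{eq:nuellmainthm} the cross term $\overline{f(x)}f(y)$ integrates to $(Hu)(r)(Hu)(s)\,(rs)^{d-1+\ell}\,\nu_{\frac{d-1}{2}+\ell}(r,s)$ (one power of $(rs)^\ell$, not two), while the diagonal terms $|f(x)|^2$ and $|f(y)|^2$ integrate, by rotation invariance, to $|(Hu)(r)|^2(rs)^{d-1}\nu_{\frac{d-1}{2}}(r,s)$ and its symmetric counterpart --- i.e.\ the \emph{$\ell=0$} kernel, not $\nu_{\frac{d-1}{2}+\ell}$. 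This mismatch is exactly the obstruction the paper isolates in Section~\ref{s:discussion}, item~(b): one cannot ``complete the square'' after angular integration. Your ``residual diagonal contribution'' is therefore not a lower-order correction but the entire difference between two distinct L\'evy kernels, and there is no mechanism in your subsequent algebraic step to absorb it; indeed, with your weight $(rs)^{d-1+2\ell}$, even the $HH'|u-u'|^2$ piece produces an extra factor $(rs)^{\ell}$ relative to $\ci_{\ell,\sigma}[u]$.

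The paper circumvents this by never performing the angular reduction at the L\'evy-kernel level. Instead, Proposition~\ref{relationformheatkernel} shows, via the Fourier--Bessel representation of the \emph{heat kernel}, that $\ce[[v]_{\ell,m}]=\ce_\zeta[v]$ for $\zeta=\tfrac{d-1}{2}+\ell$; this is the correct radial reduction and reads $\ce[[hu]_{\ell,m}]=\tfrac12\iint|h(r)u(r)-h(s)u(s)|^2\nu_\zeta(r,s)(rs)^{2\zeta}\,dr\,ds$ with the \emph{lowercase} $h(r)=r^{-\ell-\sigma}$, not $H$. From here your algebraic splitting would work (with $h$ in place of $H$), and the resulting eigenvalue identity for $h$ is what the paper proves as Theorem~\ref{hbetagammatransformed} by invoking the supermedian-function machinery of \cite{Bogdanetal2016} at the semigroup level, which also takes care of the principal-value issues you flag. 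Your $\alpha=2$ sketch is correct and matches the paper's direct computation.
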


\begin{remarks}
  \label{remarksmainresult}
  (1) Since the integral on the right-hand side of \eqref{eq:gstransformrereformulated} vanishes if $u$ is constant, by considering \eqref{eq:gstransformreformulatedcor}, we call the function
  \begin{align}
    \label{eq:defcapitalh}
    H(x) := [h]_{\ell,m}(x) = |x|^{-\sigma} Y_{\ell,m}(\omega_x)
  \end{align}
  a generalized ground state for $|D|^\alpha-\Phi_\ell(\sigma)|x|^{-\alpha}$ restricted to $V_\ell$, cf.~the discussion of generalized ground states after \eqref{eq:orggstransformtogether}.
  \\
  (2) The right-hand side of \eqref{eq:gstransformreformulatedcor} is the sum of two forms on $\R_+$,  but the isomorphism $[\cdot]_{\ell,m}$ converts \eqref{eq:gstransformreformulatedcor} into a statement about forms on $V_{\ell,m}$.
  Indeed, by \eqref{eq:defeqclassellm},
  \begin{align}
    \label{eq:potentialrdrplusformulations}
    \int_{\R_+} \frac{|h(r) \cdot u(r)|^2}{r^\alpha}\,r^{d-1+2\ell}\,dr
    = \int_{\R^d} \frac{|[h \cdot u]_{\ell,m}(x)|^2}{|x|^\alpha}\,dx.
  \end{align}
  Moreover, by \eqref{eq:nuellmainthm}, we have for $\alpha<2$,
  \begin{align}
    \label{eq:gstransformrereformulated}
    \ci_{\ell,\sigma}[u]
    = \frac12\iint\limits_{\R^d\times\R^d} \nu(x,y) \cdot \left| u(|x|) - u(|y|)\right|^2 \, \overline{H(x)}\,H(y)\,dx\,dy,
  \end{align}
  and the integral converges absolutely since $2\sigma<d$ and $u\in C_c^\infty(\R_+)$. If $\alpha=2$, then
  \begin{align}
    \ci_{\ell,\sigma}[u] = \int_{\R^d}|\nabla u(|x|)|^2 \cdot \left|H(x)\right|^2\,dx.
  \end{align}
  \\
  (3) Proposition~\ref{potentialexpansionsphericalharmonics} contains an explicit formula and sharp bounds for $\nu_{\frac{d-1}{2}+\ell}(r,s)$. Interestingly, by \cite[(15.4.7), (15.4.9)]{NIST:DLMF},
  \begin{subequations}
    \begin{align}
      \nu_{0}(r,s) & = \ca_{1,-\alpha} \left(|r-s|^{-1-\alpha} + (r+s)^{-1-\alpha}\right) \quad \text{and} \\
      \nu_{1}(r,s) & = \ca_{3,-\alpha} \cdot \frac{2\pi}{1+\alpha}\cdot\frac{|r-s|^{-1-\alpha}-(r+s)^{-1-\alpha}}{rs}.
    \end{align}
  \end{subequations}
  Note that
  $(d-1)/2+\ell=0$ corresponds to
  $d=1$ with $\ell=0$ (subordinate Brownian motion on symmetric functions) and
  $(d-1)/2+\ell=1$ corresponds to
  $d=1$ with $\ell=1$ (subordinate Brownian motion on antisymmetric functions) \emph{and}
  $d=3$ with $\ell=0$ (three-dimensional subordinate Brownian motion on radial functions).
  \\
  (4) When $\sigma=-\ell$, we have $\ce[[u]_{\ell,m}]=\ci_{\ell,-\ell}[u]$; cf.~\eqref{eq:ezetalimit} and Proposition~\ref{relationformheatkernel}.
  \\
  (5) By a density argument, \eqref{eq:defiellsigma} and \eqref{eq:gstransformreformulatedcor} extend to all functions $u$ on $\R_+$ for which we have $[h\cdot u]_{\ell,m}\in\dot H^{\alpha/2}(\R^d)$, provided $\alpha<d$. If $r^{\frac{d-1}{2}-\sigma}u(r) \in L^2(\R_+,dr)$, this is equivalent to $\int_0^\infty k^\alpha \left|\F_\ell\left(r^{\frac{d-1}{2}-\sigma}u\right)\right|^2\,dk<\infty$, with the Fourier--Bessel transform $\F_\ell$ introduced in~\eqref{eq:deffourierbessel}. See also the paragraph following~\eqref{eq:orggstransformtogether} and Lemma~\ref{fourierbessel} below.
  \\
  (6) It transpires from the proof of Theorem~\ref{gstransformreformulated} that \eqref{eq:gstransformreformulatedcor} for $\alpha<2$ also holds for $u\in L^2(\R_+,r^{d-1+2\ell}dr)$, see Theorem~\ref{hbetagammatransformed} and \cite[Proposition~5]{Bogdanetal2016}.
  \\
  (7) Jakubowski and Maciocha \cite{JakubowskiMaciocha2022} recently proved a ground state representation for the fractional Laplacian with Hardy potential on $\R_+$ with Dirichlet boundary condition on $(\R_+)^c$. As in our analysis, they use the abstract results of \cite{Bogdanetal2016}, but, in contrast to our approach, they cannot rely on subordination since the Dirichlet fractional Laplacian is only form-bounded from above by the fractional Dirichlet Laplacian, see, e.g., \cite[Lemma~6.4]{FrankGeisinger2016}. In particular, the results of \cite{JakubowskiMaciocha2022} are inequivalent to ours as can be readily seen by comparing the parameterizations of the coupling constants in \cite[(1.8)]{JakubowskiMaciocha2022} and ours in \eqref{eq:defsigmagammal}.
\end{remarks}

The proof of Theorem~\ref{gstransformreformulated} is given in Section~\ref{s:proofsgstransformreformulated}.
In view of Lemma~\ref{fourierbesselcor}, Theorem~\ref{gstransformreformulated} yields the following corollary, an \emph{angular momentum synthesis}.

\begin{corollary}
  \label{gstransformreformulatedalll}
  Let $d\in\N$, $\alpha\in(0,2]$, and $\ci$ be as in \eqref{eq:defiellsigma}.
  Let $f\in C_c^\infty(\R^d\setminus\{0\})$ have the $L^2$-expansion \eqref{eq:l2expansionsphericalharmonics} with coefficients $f_{\ell,m}=[h_{\ell,m}\cdot u_{\ell,m}]_{\ell,m}$, where $\ell\in L_d$, $m\in M_\ell$, $\alpha\in(0,d+2\ell)$, $\sigma_{\ell,m}\in[-\ell,(d-\alpha)/2]$, and $h_{\ell,m}(r):=r^{-\ell-\sigma_{\ell,m}}$. Then
  \begin{align}
    \label{eq:gstransformreformulatedalll}
    \begin{split}
      \ce[f]
      & = \sum_{\ell\in L_d, m\in M_\ell}\ce[f_{\ell,m}] \\
        & = \sum_{\ell\in L_d, m\in M_\ell} \left(\ci_{\ell,\sigma_{\ell,m}}[u_{\ell,m}] + \Phi_\ell(\sigma_{\ell,m})\int_{\R^d} dx\, \frac{|f_{\ell,m}(x)|^2}{|x|^\alpha}\right).
    \end{split}
  \end{align}
\end{corollary}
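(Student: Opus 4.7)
The plan is to assemble the corollary as a direct synthesis of the two ingredients already proved: the angular-momentum decomposition of the form $\ce$ in Lemma~\ref{fourierbesselcor}, and the per-channel ground state representation in Theorem~\ref{gstransformreformulated}. Conceptually, the only content is that the decompositions in \eqref{eq:directsumvell} are compatible with both sides of the identity \eqref{eq:gstransformreformulatedcor} applied in each channel.

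First, since $f\in C_c^\infty(\R^d\setminus\{0\})\subset H^{\alpha/2}(\R^d)$, Lemma~\ref{fourierbesselcor} applies and yields the first equality
\[
\ce[f]=\sum_{\ell\in L_d}\sum_{m\in M_\ell}\ce[f_{\ell,m}],
\]
as well as the analogous identity for the potential term $\int_{\R^d}|f(x)|^2|x|^{-\alpha}\,dx$ that will be used implicitly via \eqref{eq:potentialrdrplusformulations}.

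Next, I would verify that each coordinate function $u_{\ell,m}$ falls in the regularity class required by Theorem~\ref{gstransformreformulated}. From $f\in C_c^\infty(\R^d\setminus\{0\})$ and \eqref{eq:expressioncoordinatefunction}, the function $\tilde u_{\ell,m}(r):=r^{-\ell}\int_{\bs^{d-1}}\overline{Y_{\ell,m}(\omega)}f(r\omega)\,d\omega$ lies in $C_c^\infty(\R_+)$; since the weight $h_{\ell,m}(r)=r^{-\ell-\sigma_{\ell,m}}$ is smooth and nonvanishing on $\R_+$, the rescaled coefficient $u_{\ell,m}=\tilde u_{\ell,m}/h_{\ell,m}$ also belongs to $C_c^\infty(\R_+)$. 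Theorem~\ref{gstransformreformulated} then applies to every summand with parameters $(\ell,m,\sigma_{\ell,m})$, giving
\[
\ce[f_{\ell,m}]=\ce[[h_{\ell,m}\cdot u_{\ell,m}]_{\ell,m}]=\ci_{\ell,\sigma_{\ell,m}}[u_{\ell,m}]+\Phi_\ell(\sigma_{\ell,m})\int_{\R_+}\frac{|h_{\ell,m}(r)u_{\ell,m}(r)|^2}{r^\alpha}r^{d-1+2\ell}\,dr.
\]
Using \eqref{eq:potentialrdrplusformulations} to rewrite the potential term in terms of $f_{\ell,m}$ on $\R^d$ and summing over $\ell\in L_d$, $m\in M_\ell$ yields the second equality in \eqref{eq:gstransformreformulatedalll}.

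The main (and essentially only) subtlety is that the expansion $f=\sum f_{\ell,m}$ is in general an infinite sum, and one must justify passing to the limit in the non-negative quadratic forms. This is immediate: Lemma~\ref{fourierbesselcor} already establishes the interchange for $\ce$ and for the potential form on the left-hand side, and on the right-hand side all terms are non-negative (since $\sigma_{\ell,m}\in[-\ell,(d-\alpha)/2]$ implies $\Phi_\ell(\sigma_{\ell,m})\in[0,\kappa_c(\ell)]$ by Proposition~\ref{propertiesphi}, and $\ci_{\ell,\sigma_{\ell,m}}[u_{\ell,m}]\geq 0$ by the defining formulas \eqref{eq:defiellsigma}), so monotone convergence allows unrestricted summation. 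No further estimate is needed; the corollary follows.
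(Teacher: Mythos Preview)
Your proposal is correct and follows exactly the route the paper intends: the corollary is stated as an immediate consequence of Lemma~\ref{fourierbesselcor} and Theorem~\ref{gstransformreformulated}, and you have simply spelled out the details (in particular the useful verification that each $u_{\ell,m}\in C_c^\infty(\R_+)$). The closing paragraph about monotone convergence is harmless but superfluous, since the termwise identity $\ce[f_{\ell,m}]=\ci_{\ell,\sigma_{\ell,m}}[u_{\ell,m}]+\Phi_\ell(\sigma_{\ell,m})\int|f_{\ell,m}|^2|x|^{-\alpha}\,dx$ together with the already established $\sum_{\ell,m}\ce[f_{\ell,m}]=\ce[f]<\infty$ makes the summation automatic.
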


\begin{remark}
  Corollary~\ref{gstransformreformulatedalll} remains valid for all $f\in \dot H^{\alpha/2}(\R^d)$ when $\alpha<d$. If $\alpha<2$, then it also holds for all $f\in L^2(\R^d)$. Compare with Remarks~\ref{remarksmainresult}(4) and (5).
\end{remark}

\smallskip
The rest of this Introduction is structured as follows. In the next three subsections, we motivate Theorem \ref{gstransformreformulated}. Thus, in Section~\ref{s:yafaev}, we emphasize that \eqref{eq:hardy} even holds for $\kappa$ in the larger interval $(-\infty,\kappa_{\rm c}(\ell)]$, if, in addition, $f\in \bigoplus_{\ell'\geq\ell}V_{\ell'}$.
In Sections~\ref{s:discussion}, we compare the ground state representations in Theorem~\ref{gstransformreformulated} with those in \eqref{eq:orggstransform}--\eqref{e.gs2}.
In Section~\ref{s:applications}, we indicate possible applications of Theorem~\ref{gstransformreformulated}.
In Section~\ref{s:ideaproof}, we outline our proof of Theorem~\ref{gstransformreformulated}.
The Introduction ends with Section~\ref{s:organization}, where we give an overview of the rest of the paper and collect commonly used notation.

\subsection{Improved Hardy inequality functions with angular oscillations}
\label{s:yafaev}

In view of the previous discussions, it is natural to consider \eqref{eq:hardy} for functions in $V_{\ell,m}$. For $\ell=0$ such functions are radial, but they have angular oscillatory behavior for $\ell>0$, which leads to an improvement of \eqref{eq:hardy}. This is easily seen when $\alpha=2$. Recall that $\{Y_{\ell,m}\}_{\ell,m}$ are eigenfunctions of the Laplace--Beltrami operator $-\Delta_{\bs^{d-1}}$, with eigenvalues $\ell(\ell+d-2)$. By the representation of the Laplacian in spherical coordinates and the original inequality of Hardy, i.e., $\int_0^\infty |v'(r)|^2\,dr\geq4^{-1}\int_0^\infty |v(r)|^2r^{-2}\,dr$, we get
\begin{align}
  \label{eq:yafaev2}
  \begin{split}
    & \langle[u]_{\ell,m},(-\Delta)[u]_{\ell,m}\rangle_{L^2(\R^d)} \\
    & \quad = \left\langle Uu,\left(-\frac{d^2}{dr^2} + \frac{(d-1)(d-3)+4\ell(\ell+d-2)}{4r^2}\right)Uu\right\rangle_{L^2(\R_+,dr)} \\
    & \quad \geq \kappa_{\rm c}(\ell) \int_0^\infty \frac{|(Uu)(r)|^2}{r^2}\,dr
      = \kappa_{\rm c}(\ell) \int_{\R^d}\frac{|[u]_{\ell,m}(x)|^2}{|x|^2}\,dx.
  \end{split}
\end{align}
Here $U:L^2(\R_+,r^{d-1+2\ell}dr)\to L^2(\R_+,dr)$ is the unitary operator
\begin{align}
  \label{eq:defdoob}
  L^2(\R_+,r^{d-1+2\ell}dr) \ni u \mapsto (Uu)(r)=r^{(d-1+2\ell)/2}u(r) \in L^2(\R_+,dr).
\end{align}
More generally, for $\ell\in L_d$ and $\alpha\in(0,d)$, the improved Hardy inequality
\begin{align}
  \label{eq:yafaev3}
  \ce[[u]_{\ell,m}]
  \geq \kappa\int_{\R^d}\frac{|[u]_{\ell,m}(x)|^2}{|x|^{\alpha}}\,dx
\end{align}
is known to hold for all $u\in C_c^\infty(\R_+)$ if and only if $\kappa\leq\kappa_{\rm c}(\ell)$. 
While \eqref{eq:yafaev2} and \eqref{eq:yafaev3} for $\ell=0$ merely confirm \eqref{eq:hardy} with $\kappa\leq\kappa_c$ and radial functions, they are stronger when $\ell>0$ since $\kappa_c(\ell)>\kappa_c$ (see Proposition~\ref{propertiesphi}).
For $d=3$ and $\alpha=1$, Inequality \eqref{eq:yafaev3} was first shown by Le Yaouanc, Oliver, and Raynal \cite[Section~VI]{LeYaouancetal1997}, while the general case was proved by Yafaev \cite[(2.4), (2.26)]{Yafaev1999}. We note that his proof works for all $\alpha\in(0,d)$. Since our proof of Theorem~\ref{gstransformreformulated} is based on the subordination of $\me{t\Delta}$, we restrict our considerations to $\alpha\in(0,2]$, in fact to $\alpha\in(0,2]\cap(0,d+2\ell)$.
Incidentally, for $d\in\{1,2\}$ and $\ell=1$, this interval contains $\alpha\in(0,d)$.

\subsection{Comparison between the ground state representations}
\label{s:discussion}

Let us compare the ground state representations in \eqref{eq:orggstransform}--\eqref{e.gs2} with those in Theorem~\ref{gstransformreformulated} and Corollary~\ref{gstransformreformulatedalll}. For brevity, we only consider $\alpha\in(0,2)$.
\\
(1) Both representations agree for $\ell=0$.
Indeed, suppose $\sigma\in[0,(d-\alpha)/2]$, $u\in C_c^\infty(\R_+)$, and $f(x)=|\bs^{d-1}|^{-1/2}u(|x|)|x|^{-\sigma}$. By Theorem~\ref{gstransformreformulated},
\begin{align*}
  \ce[f] = \Phi_0(\sigma)\int_{\R^d}\frac{|f(x)|^2}{|x|^\alpha}\,dx + \frac12 \int_{\R_+^2}dr\,ds\, (rs)^{d-1-\sigma}\nu_{\frac{d-1}{2}}(r,s)|u(r)-u(s)|^2,
\end{align*}
where, indeed, by \eqref{eq:nuellmainthm}, the rightmost integral equals the rightmost integral in \eqref{eq:orggstransform}. Therefore, our principal interest is in the analysis of $\ell>0$.
\\
(2) Suppose $f=\sum_{\ell,m}f_{\ell,m}$, as in \eqref{eq:l2expansionsphericalharmonics}. Then both the ground state representations in Corollary~\ref{gstransformreformulatedalll} and, by Lemma~\ref{fourierbesselcor}, those in \eqref{eq:orggstransformtogether} break down into ground state representations corresponding to different spaces $V_{\ell,m}$.
To compare the representations of $\ce[f]$ by \eqref{eq:orggstransform} or \eqref{eq:gstransformreformulatedcor} and \eqref{eq:gstransformreformulatedalll} in more detail, we record that
\begin{align*}
  (rs)^{-\ell}\int_{\bs^{d-1}\times\bs^{d-1}}d\omega_x\,d\omega_y\, \overline{Y_{\ell,m}(\omega_x)}Y_{\ell',m'}(\omega_y)\nu(r\omega_x-s\omega_y)
  = \nu_{\frac{d-1}{2}+\ell}(r,s)\delta_{\ell,\ell'}\delta_{m,m'}
\end{align*}
holds for $\ell,\ell'\in L_d$, $m\in M_\ell$, and $m'\in M_{\ell'}$ by the expression of $\nu_{\frac{d-1}{2}+\ell}$ in Theorem~\ref{gstransformreformulated} and the spherical symmetry; see Corollary~\ref{kalfcor}. Here, $\delta_{\ell,\ell'}$ denotes the Kronecker delta. Therefore, if
$f_{\ell,m}(x)=u_{\ell,m}(|x|)|x|^{-\sigma}Y_{\ell,m}(\omega_x)$,
then the ground state representation \eqref{eq:orggstransform} is equivalent to
\begin{align}
  \label{eq:orggstransformanglesintegrated}
  & \ce[f] - \Phi(\sigma)\int_{\R^d}|f(x)|^2\,|x|^{-\alpha}\,dx \\
  \begin{split}
    \notag
    & \ = \sum_{\ell,m;\ell',m'}\int_{\R_+\times\R_+}dr\,ds\, (rs)^{d-1-\sigma} \int_{\bs^{d-1}\times\bs^{d-1}}d\omega_x\,d\omega_y\, \nu(r\omega_x-s\omega_y) \\
    & \quad \times \left[\overline{u_{\ell,m}(r)}u_{\ell',m'}(r)\overline{Y_{\ell,m}(\omega_x)}\,Y_{\ell',m'}(\omega_x) - \re\left(\overline{u_{\ell,m}(r)}u_{\ell',m'}(s)\overline{Y_{\ell,m}(\omega_x)}\,Y_{\ell',m'}(\omega_y)\right)\right] \\
    & \ = \sum_{\ell,m}\int_{\R_+\times\R_+}\!\! dr\,ds\, (rs)^{d-1-\sigma}\left[|u_{\ell,m}(r)|^2\nu_{\frac{d-1}{2}}(r,s) \!-\! \re\!\left(\overline{u_{\ell,m}(r)}\,u_{\ell,m}(s)\right)\!\nu_{\frac{d-1}{2}+\ell}(r,s)\right].
  \end{split}
\end{align}
We see the following two differences between \eqref{eq:orggstransform} and \eqref{eq:gstransformreformulatedcor} for any fixed $\ell>0$:

\begin{enumerate}
\item[(a)] The Hardy potential has a prefactor of $\Phi(\sigma)$ in \eqref{eq:orggstransform}, while it has a prefactor of $\Phi_\ell(\sigma)$ in \eqref{eq:gstransformreformulatedcor}, which is greater than $\Phi(\sigma)$ (see Proposition~\ref{propertiesphi}). In particular, for functions $f\in C_c^\infty\cap V_{\ell}$ with fixed $\ell\in L_d$, Theorem~\ref{gstransformreformulated} yields the Hardy inequality \eqref{eq:hardy} with the larger, and optimal, constant $\kappa_{\rm c}(\ell)$.
  
\item[(b)] As the last line of \eqref{eq:orggstransformanglesintegrated} reveals, the integrand in the ground state representation \eqref{eq:orggstransform} is
  \begin{align}
    \label{eq:orggstransformkernelcomparison}
    (rs)^{d-1-\sigma}\left[|u_{\ell,m}(r)|^2\nu_{\frac{d-1}{2}}(r,s) - \re\left(\overline{u_{\ell,m}(r)}\,u_{\ell,m}(s)\right)\nu_{\frac{d-1}{2}+\ell}(r,s)\right].
  \end{align}
  In particular, the term $|u_{\ell,m}(r)|^2$ is multiplied by $\nu_{\frac{d-1}{2}}(r,s)$, while that involving $\overline{u_{\ell,m}(r)}\,u_{\ell,m}(s)$ is multiplied by $\nu_{\frac{d-1}{2}+\ell}(r,s)$ which is different and prevents us from completing the squares. On the other hand, after expanding the square, the integrand in the ground state representation \eqref{eq:gstransformreformulatedcor} is
  \begin{align}
    \label{eq:newgstransformkernelcomparison}
    (rs)^{d-1-\sigma} \nu_{\frac{d-1}{2}+\ell}(r,s) \left[|u_{\ell,m}(r)|^2-\re\left(\overline{u_{\ell,m}(r)}\,u_{\ell,m}(s)\right)\right] h(r)h(s).
  \end{align}
  Thus, in contrast to the expression in \eqref{eq:orggstransformkernelcomparison}, also the term $|u_{\ell,m}(r)|^2$ is multiplied by $\nu_{\frac{d-1}{2}+\ell}(r,s)$, and not by $\nu_{\frac{d-1}{2}}(r,s)$, so we can complete squares.

\end{enumerate}

In summary, Theorem~\ref{gstransformreformulated} and Corollary~\ref{gstransformreformulatedalll} can be seen as refinements of the ground state representations \eqref{eq:orggstransform}--\eqref{e.gs2} since they take into account angular oscillations.

\subsection{Implications of Theorem~\ref{gstransformreformulated}}
\label{s:applications}

Let us comment on a broader impact of Theorem~\ref{gstransformreformulated} for $\alpha<2$.
Bogdan, Grzywny, Jakubowski, and Pilarczyk \cite{Bogdanetal2019} proved that, for $h(r)=r^{-\sigma}$, $\exp\left(-t(|D|^\alpha-\Phi_0(\sigma)/|x|^\alpha)\right)[h]_{0,0}(x)=[h]_{0,0}(x)$, which
further motivates calling $[h]_{0,0}$ a ground state. This invariance was crucial to prove pointwise bounds for $\exp(-t(|D|^\alpha-\Phi_0(\sigma)/|x|^\alpha))$. In turn, these bounds were fundamental for analyzing the $L^p$-Sobolev spaces generated by powers of $|D|^\alpha-\Phi_0(\sigma)/|x|^\alpha$, see \cite{Franketal2021,Merz2021,BuiDAncona2023,BuiNader2022}, and for the proof of the strong Scott conjecture for relativistic atoms \cite{Franketal2020P,Franketal2023,Franketal2023T}. The latter works concern the analysis of the electron density of a large atom described by a multi-particle analog of Chandrasekhar's operator $\sqrt{-\Delta+1}-1-\kappa/|x|$ in $L^2(\R^3)$. There, the heat kernel bounds of \cite{Bogdanetal2019} were also used to give upper bounds for the sums of squares of the eigenfunctions of the Chandrasekhar operator. On the other hand, Jakubowski, Kaleta, and Szczypkowski \cite{Jakubowskietal2022} used heat kernel bounds to investigate single eigenfunctions of the Chandrasekhar operator and, in particular, proved lower bounds for the ground state. As in these works, the results of the present paper give a starting point for a deeper analysis of $\exp(-t(|D|^\alpha-\Phi_\ell(\sigma)/|x|^\alpha))$, powers of $|D|^\alpha-\Phi_\ell(\sigma)/|x|^\alpha$, and eigenfunctions and their sums for the Chandrasekhar operator projected onto $V_{\ell,m}$.

\subsection{Idea of the proof of Theorem~\ref{gstransformreformulated}}
\label{s:ideaproof}

We will use the results of Bogdan, Dyda, and Kim~\cite[Sections~2--3]{Bogdanetal2016} to study $|D|^\alpha -\Phi_\ell(\sigma)|x|^{-\alpha}$ in $V_{\ell,m}$. In that paper, the authors consider a symmetric sub-probability transition density $p_t(x,y)$ on some $\sigma$-finite measure space $(X,\cm,m)$ and the quadratic form $\gE[u]=\lim_{t\searrow0}t^{-1}\langle u,(1-p_t)u\rangle$, $u\in L^2(X,m)$.
They construct positive, supermedian functions $h$ by integrating $p_t$ against suitable functions in space and time, and use $h$ to derive a Hardy inequality and a ground state representation for $\gE$.
Formally, the ground state $|x|^{-\sigma}Y_{\ell,m}(\omega_x)$ of $|D|^\alpha-\Phi_\ell(\sigma)|x|^{-\alpha}$ is an analogue of $h$, but it is an oscillating function. However, since $V_{\ell,m}=L^2(\R_+,r^{d-1+2\ell}dr)\otimes\spann\{Y_{\ell,m}\}$, we see that the positive function $r^{-\ell-\sigma}$ is the radial part of the ground state of the restriction of $|D|^\alpha-\Phi_\ell(\sigma)|x|^{-\alpha}$ to $V_{\ell,m}$. Thus, we use \cite{Bogdanetal2016} to find ground states of and a ground state representation for the radial part of $|D|^\alpha-\Phi_\ell(\sigma)|x|^{-\alpha}$ restricted to $V_{\ell,m}$.

Specifically, our approach to prove Theorem~\ref{gstransformreformulated} is as follows.
First, we use the semigroup representation \eqref{eq:fractlaplaceheatkernel}, integrate over the unit spheres, and obtain the identity
\begin{align}
  \label{eq:besselemerges}
  \langle[u]_{\ell,m},P^{(\alpha)}(t,\cdot,\cdot)[u]_{\ell,m}\rangle_{L^2(\R^d)}
  = \langle u,p_{(d-1+2\ell)/2}^{(\alpha)}(t,\cdot,\cdot)u\rangle_{L^2(\R_+,r^{d-1+2\ell}dr)},
\end{align}
see Proposition~\ref{relationformheatkernel}. Here $p_\zeta^{(2)}(t,r,s)$ is the Bessel heat kernel of order $\zeta-1/2$ for any $\zeta\in(-1/2,\infty)$ and, when $\alpha\in(0,2)$, the kernels $p_\zeta^{(\alpha)}(t,r,s)$ are defined by the usual $\alpha/2$-stable subordination of $p_\zeta^{(2)}(t,r,s)$, see Definitions~\ref{defsemigroupptwise}--\ref{defsemigroupptwisealpha}.
We prove that $p_\zeta^{(\alpha)}(t,r,s)$ is a probability transition density (Section~\ref{s:semigroupproperties}) and compute the corresponding L\'evy kernel $\nu_{\zeta}(r,s):=\lim_{t\searrow0}t^{-1} p_\zeta^{(\alpha)}(t,r,s)$ (Section~\ref{s:kinenheatkernel}). Using the heat and L\'evy kernels, we define corresponding quadratic forms on $\R_+$, denoted by $\ce_\zeta$ (Section~\ref{s:defquadformsemigroup}). It is to these quadratic forms that we apply the abstract results of \cite{Bogdanetal2016} and prove a ground state representation (Theorem~\ref{hbetagammatransformed}). Finally, using \eqref{eq:besselemerges}, we show that $\ce[[u]_{\ell,m}]=\ce_\zeta[u]$ (Proposition~\ref{relationformheatkernel}).
This allows us to conclude the proof of Theorem~\ref{gstransformreformulated}.

\smallskip
Let us also note that there are further methods, similar in spirit, to construct ground states (or supersolutions) and Hardy (in)equalities, e.g., by Devyver, Fraas, and Pinchover \cite{Devyveretal2014}. Their techniques work well for second-order elliptic operators. It is an interesting problem to see if their approach can be adapted to treat also other operators, e.g., those that we discuss in the present paper.
We note that Dyda \cite[p.~550]{Dyda2012} offers an approach to Hardy inequalities exploiting signed (antisymmetric) functions, which is much in spirit of our development, at least for $\ell=1$. We refer to Kulczycki and Ryznar \cite{KulczyckiRyznar2016} for a related notion of \emph{difference process}, too.
We were also inspired by intertwining of operators, studied, e.g., by Patie and Savov \cite{PatieSavov2021}, which motivates a philosophy to escape the restriction of non-negativity when studying the semigroup $P^{(\alpha)}$ and the form $\ce^{(\alpha)}$ on $V_{\ell,m}$.

\subsection{Organization and notation}
\label{s:organization}

Here is the structure of the paper.
In Section~\ref{s:pointwise}, we define and analyze the Bessel heat kernels and their $\alpha/2$-subordinated kernels. Moreover, we study the associated L\'evy kernels and use them to define the corresponding quadratic forms on $\R_+$.
In Section~\ref{s:optimalhardyalpha2}, we prove a ground state representation for the quadratic forms defined in Section~\ref{s:pointwise} (Theorem~\ref{hbetagammatransformed}).
In Section~\ref{s:notation}, we study $|D|^\alpha$ and $\me{-t|D|^\alpha}$ when restricted to $ V_{\ell,m}$ and establish the link to the quadratic forms and semigroups defined in Section~\ref{s:pointwise}.
With this connection at hand, in Section~\ref{s:proofsgstransformreformulated}, we apply Theorem~\ref{hbetagammatransformed} to prove Theorem~\ref{gstransformreformulated}.
The Appendix contains auxiliary results.

\smallskip
Below we denote generic (real positive) constants by $c\in(0,\infty)$. The values of constants may change from place to place. We mark the dependence of $c$ on some parameter $\tau$ by $c_\tau$. We write $A\lesssim B$ for functions $A,B\geq0$ to indicate that there is a constant $c$ such that $A\leq c B$. If $c$ depends on $\tau$, we may write $A\lesssim_\tau B$, but the dependence on $d$ and $\alpha$ is usually ignored in the notation. The notation $A\sim B$ means that $A\lesssim B\lesssim A$ and then we say \emph{$A$ is comparable, or, equivalent to $B$}.\index{$\lesssim$}\index{$\sim$} We abbreviate $A\wedge B:=\min\{A,B\}$ and $A\vee B:=\max\{A,B\}$.\index{$\wedge$}\index{$\vee$} For $x\in\R$, the Gauss bracket $\floor{x}$ denotes the integer part of~$x\in\R$.\index{$\floor{x}$}
The function spaces, e.g., $L^2(\R^d)$, are complex.
For $z\in\C\setminus(-\infty,0]$ we denote the ordinary and modified Bessel functions of the first kind of order $\nu\in\C$ by $J_\nu(z)$ and $I_\nu(z)$, respectively \cite[(10.2.2), (10.25.2)]{NIST:DLMF}.\index{$J_\nu$}\index{$I_\nu$} We write $_2\tilde F_1(a,b;c;z) =\, _2F_1(a,b;c;z)/\Gamma(c)$, with $a,b,c\in\C$ and $z\in\{w\in\C:\,|w|<1\}$, for the regularized hypergeometric function \cite[(15.2.1)]{NIST:DLMF} and $_2F_1(a,b;c;z)$ for the (usual) hypergeometric function \cite[(15.2.2)]{NIST:DLMF} when $c\notin\{0,-1,-2,...\}$.\index{$_2F_1(a,b;c;z)$}\index{$_2\tilde F_1(a,b;c;z)$} Moreover, $_1\tilde F_1(a;b;z)\!=\!\, _1F_1(a;b;z)/\Gamma(b)$, with $a,b,z\!\in\!\C$, denotes the regularized confluent hypergeometric function, where $_1F_1(a;b;z)$ denotes the Kummer confluent hypergeometric function \cite[(13.2.2)]{NIST:DLMF}, defined for $b\notin\{0,-1,-2,...\}$.\index{$_1F_1(a;b;z)$}\index{$_1\tilde F_1(a;b;z)$} We introduce further notation as we proceed. The notation is summarized in the \hyperref[index]{\indexname} at the end of this paper.

\section{Subordinated Bessel kernels}
\label{s:pointwise}

\subsection{Properties of heat kernels}
\label{s:semigroupproperties}

Our goal is to investigate the Gaussian heat kernel
\begin{align}
  \me{t\Delta}(x,y)
  = P^{(2)}(t,x,y)
  = \frac{1}{(4\pi t)^{d/2}}\exp\left(-\frac{|x-y|^2}{4t}\right), \quad t>0,\ x,y\in\R^d
\end{align}
and its $\frac\alpha2$-subordinated semigroups $\me{-t|D|^\alpha}$, restricted to $V_{\ell,m}$ for each $\ell\in L_d$, $m\in M_\ell$. According to Proposition~\ref{relationformheatkernel} below, this leads to kernels defined on $\R_+$, which depend on $\alpha\in(0,2]$ and $(d-1)/2+\ell$ with $d\in\N$ and $\ell\in L_d$, but not on $m\in M_\ell$. For the sake of future reference, we consider general kernels \emph{indexed} by $\alpha\in(0,2]$ and an \emph{arbitrary} parameter $\zeta\in(-1/2,\infty)$. In the context of Theorem~\ref{gstransformreformulated}, we may think of $\zeta$ as equal to $(d-1)/2+\ell$. We first define and analyze these kernels for $\alpha=2$. Afterwards, we use subordination to study the case $\alpha<2$.

\begin{definition}
  \label{defsemigroupptwise}
  Let $\zeta\in(-1/2,\infty)$.
  We consider the reference (speed) measure $r^{2\zeta}dr$ on $\R_+$ and, for $r,s,t>0$, define
  \begin{align}
    \label{eq:defpheatalpha2}
    \begin{split}
      p_\zeta^{(2)}(t,r,s) & : = \frac{(rs)^{1/2-\zeta}}{2t}\exp\left(-\frac{r^2+s^2}{4t}\right)I_{\zeta-1/2}\left(\frac{rs}{2t}\right),
    \end{split}
  \end{align}
  where $I_{\zeta-1/2}$ is the modified Bessel function of the first kind.
\end{definition}
The kernel $p_\zeta^{(2)}$ is the heat kernel associated to the Bessel operator of order $\zeta-1/2$, namely to $-\frac{d^2}{dx^2}-2\zeta x^{-1}\frac{d}{dx}$ in $L^2(\R_+,r^{2\zeta}dr)$.
The notation for Bessel kernels varies in the literature. Our notation is close, e.g., to \cite[Section~2.2]{Maleckietal2016}, with $\nu$ and $t$ therein equal to $\zeta-1/2$ and $2t$ here, respectively.

Of essential importance for us is the fact that $p_\zeta^{(2)}(t,r,s)$ is a probability transition density. That is, for all $\zeta\in(-1/2,\infty)$, $t,t',r,s>0$,
\begin{enumerate}
\item $p_\zeta^{(2)}(t,r,s)>0$, 
\item $\int_0^\infty p_\zeta^{(2)}(t,r,s)\,s^{2\zeta}ds = 1$, and 
\item $\int_0^\infty p_\zeta^{(2)}(t,r,z) p_\zeta^{(2)}(t',z,s) z^{2\zeta}\,dz = p_\zeta^{(2)}(t+t',r,s)$. 
\end{enumerate}
The positivity is immediate from the positivity of the modified Bessel function, see \cite[(10.25.2)]{NIST:DLMF}. For convenience, we provide computations proving the other two properties (normalization and Chapman--Kolmogorov equations) in the Appendices~\ref{s:normalized} and \ref{s:chapman}. We also note the scaling
\begin{align}
  \label{eq:scalingalpha2}
  p_\zeta^{(2)}(t,r,s)
  = t^{-\frac{2\zeta+1}{2}} p_\zeta^{(2)}\left(1,\frac{r}{t^{1/2}},\frac{s}{t^{1/2}}\right),
\end{align}
which is immediate from \eqref{eq:defpheatalpha2}.
For a much more detailed analysis of $p_\zeta^{(2)}$, we refer, e.g., to the textbooks \cite[Part I, Section~IV.6 or Appendix~1.21]{BorodinSalminen2002} or \cite[Chapter~XI]{RevuzYor1999}.

\smallskip
Now we subordinate $p_\zeta^{(2)}$. Recall that for $\alpha\in(0,2)$ and $t>0$, by Bernstein's theorem, the completely monotone function $[0,\infty)\ni\lambda\mapsto\me{-t\lambda^{\alpha/2}}$  is the Laplace transform of a probability density function $\R_+\ni\tau\mapsto\sigma_t^{(\alpha/2)}(\tau)$. That is,
\begin{align}
  \label{eq:subordination}
  \me{-t\lambda^{\alpha/2}} = \int_0^\infty \me{-\tau\lambda}\,\sigma_t^{(\alpha/2)}(\tau)\,d\tau, \quad t>0,\,\lambda\geq0,
\end{align}
see, e.g., \cite[Chapter~5]{Schillingetal2012}.
We record some useful properties of $\sigma_t^{(\alpha/2)}(\tau)$ in Appendix~\ref{s:propertiessubordinator}. We now introduce the $\tfrac\alpha2$-subordinated Bessel kernels.

\begin{definition}
  \label{defsemigroupptwisealpha}
  Let $\zeta\in(-1/2,\infty)$ and $\alpha\in(0,2)$.
  We consider the reference measure $r^{2\zeta}dr$ on $\R_+$ and, for $r,s,t>0$, define
  \begin{align}
    \label{eq:defpheatalpha}
    \begin{split}
      p_\zeta^{(\alpha)}(t,r,s) & : = \int_0^\infty p_\zeta^{(2)}(\tau,r,s)\,\sigma_t^{(\alpha/2)}(\tau)\,d\tau.
    \end{split}
  \end{align}\index{$p_\zeta^{(\alpha)}(t,r,s)$}
\end{definition}

Note that the integral operators with kernel $p_\zeta^{(\alpha)}(t,\cdot,\cdot)$, $\alpha\in(0,2]$, $t>0$, act with respect to the measure $r^{2\zeta}dr$, i.e.,
$$
(p_\zeta^{(\alpha)}(t,\cdot,\cdot)u)(r) = \int_0^\infty p_\zeta^{(\alpha)}(t,r,s)u(s) s^{2\zeta}\,ds.
$$

Since $\sigma_t^{(\alpha/2)}$ is a convolution semigroup of probability transition density functions \cite[p.~48--49]{Schillingetal2012} (see also \eqref{eq:subordinatornormalized} and \eqref{eq:subordinatorconvolution} in Appendix~\ref{s:propertiessubordinator}), the above analysis for $\alpha=2$ implies that $p_\zeta^{(\alpha)}(t,\cdot,\cdot)$ is a probability transition density for all $\zeta\in(-1/2,\infty)$ and $\alpha\in(0,2]$. We summarize this in the following proposition.

\begin{proposition}
  \label{summarypropertiespzeta}
  Let $\zeta\in(-1/2,\infty)$, $\alpha\in(0,2]$, and $t,t',r,s>0$. Then we have $p_\zeta^{(\alpha)}(t,r,s)>0$ and
  \begin{align}
    \label{eq:normalizedalpha}
    & \int_0^\infty p_\zeta^{(\alpha)}(t,r,s) s^{2\zeta}\,ds = 1, \\
    \label{eq:chapman}
    & \int_0^\infty p_\zeta^{(\alpha)}(t,r,z) p_\zeta^{(\alpha)}(t',z,s) z^{2\zeta}\,dz = p_\zeta^{(\alpha)}(t+t',r,s), \\
    \label{eq:scalingalpha}
    & p_\zeta^{(\alpha)}(t,r,s) = t^{-\frac{2\zeta+1}{\alpha}} p_\zeta^{(\alpha)}\left(1,\frac{r}{t^{1/\alpha}},\frac{s}{t^{1/\alpha}}\right).
  \end{align}
\end{proposition}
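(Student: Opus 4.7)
The plan is to reduce everything to the case $\alpha=2$, which has been recorded just above the statement (and whose computational details, for normalization and Chapman--Kolmogorov, are deferred to Appendices~\ref{s:normalized} and \ref{s:chapman}), and then to transfer those properties through subordination by $\sigma_t^{(\alpha/2)}$.

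I would first dispatch the case $\alpha=2$. Positivity of $p_\zeta^{(2)}(t,r,s)$ is immediate from $I_{\zeta-1/2}>0$ on $\R_+$ via \cite[(10.25.2)]{NIST:DLMF}. The normalization $\int_0^\infty p_\zeta^{(2)}(t,r,s)\,s^{2\zeta}ds=1$ and the Chapman--Kolmogorov identity are the two properties verified in Appendices~\ref{s:normalized} and \ref{s:chapman}, so I would simply invoke those. Finally, the scaling \eqref{eq:scalingalpha}, which for $\alpha=2$ reduces to \eqref{eq:scalingalpha2}, is a direct check from the closed form \eqref{eq:defpheatalpha2}.

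Next I would handle $\alpha\in(0,2)$ by unwinding Definition~\ref{defsemigroupptwisealpha}. Positivity is immediate, since $p_\zeta^{(2)}(\tau,r,s)>0$ and $\sigma_t^{(\alpha/2)}(\tau)>0$. For normalization, I apply Fubini (all terms are non-negative) to exchange the $s$- and $\tau$-integrals:
\begin{align*}
\int_0^\infty p_\zeta^{(\alpha)}(t,r,s)\,s^{2\zeta}\,ds
= \int_0^\infty \sigma_t^{(\alpha/2)}(\tau)\left(\int_0^\infty p_\zeta^{(2)}(\tau,r,s)\,s^{2\zeta}ds\right)d\tau
=\int_0^\infty \sigma_t^{(\alpha/2)}(\tau)\,d\tau=1,
\end{align*}
the inner integral being $1$ by the $\alpha=2$ case and the outer by $\sigma_t^{(\alpha/2)}$ being a probability density (see \eqref{eq:subordinatornormalized}). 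For the Chapman--Kolmogorov identity, I substitute Definition~\ref{defsemigroupptwisealpha} twice, apply Fubini, use the $\alpha=2$ Chapman--Kolmogorov identity in the $z$-integral, and then use the convolution semigroup property \eqref{eq:subordinatorconvolution} of $\sigma_\cdot^{(\alpha/2)}$:
\begin{align*}
\int_0^\infty p_\zeta^{(\alpha)}(t,r,z)p_\zeta^{(\alpha)}(t',z,s)z^{2\zeta}dz
&=\iint_{\R_+^2}\sigma_t^{(\alpha/2)}(\tau_1)\sigma_{t'}^{(\alpha/2)}(\tau_2)p_\zeta^{(2)}(\tau_1+\tau_2,r,s)\,d\tau_1d\tau_2\\
&=\int_0^\infty p_\zeta^{(2)}(\tau,r,s)\,\sigma_{t+t'}^{(\alpha/2)}(\tau)\,d\tau=p_\zeta^{(\alpha)}(t+t',r,s).
\end{align*}

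For the scaling relation \eqref{eq:scalingalpha} with $\alpha<2$, I would combine the scaling \eqref{eq:scalingalpha2} for $p_\zeta^{(2)}$ with the scaling of $\sigma_t^{(\alpha/2)}$ (which, as collected in Appendix~\ref{s:propertiessubordinator}, reads $\sigma_t^{(\alpha/2)}(\tau)=t^{-2/\alpha}\sigma_1^{(\alpha/2)}(\tau t^{-2/\alpha})$). Changing variables $\tau=t^{2/\alpha}\tau'$ in \eqref{eq:defpheatalpha} absorbs a factor $t^{2/\alpha}$ against the $t^{-2/\alpha}$ from $\sigma_t^{(\alpha/2)}$, while \eqref{eq:scalingalpha2} applied to $p_\zeta^{(2)}(t^{2/\alpha}\tau',r,s)$ produces the factor $t^{-(2\zeta+1)/\alpha}$ and replaces $r,s$ by $r/t^{1/\alpha}, s/t^{1/\alpha}$, yielding exactly \eqref{eq:scalingalpha}.

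There is no real obstacle here: the only non-formal ingredients are the $\alpha=2$ Chapman--Kolmogorov identity (whose verification via the integral representation of $I_{\zeta-1/2}$ is what actually requires work, but is deferred to Appendix~\ref{s:chapman}) and the elementary properties of the stable subordinator $\sigma_t^{(\alpha/2)}$. The rest is Fubini and bookkeeping.
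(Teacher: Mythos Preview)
Your proposal is correct and follows essentially the same approach as the paper: reduce to the $\alpha=2$ case (positivity from the modified Bessel function, normalization and Chapman--Kolmogorov from the appendices, scaling from the explicit formula), then transfer all four properties through subordination using Tonelli/Fubini, the convolution semigroup property of $\sigma_t^{(\alpha/2)}$, and its scaling. The only cosmetic difference is that the paper says ``Tonelli'' where you say ``Fubini,'' but since everything is non-negative this is immaterial.
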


\begin{proof}
  We discussed these claims already for $\alpha\!=\!2$, so let $\alpha\in(0,2)$.
  The positivity of $p_\zeta^{(\alpha)}(t,r,s)$ for $\alpha\!\in\!(0,2)$ follows from subordination \eqref{eq:defpheatalpha} and the positivity of $p_\zeta^{(2)}(t,r,s)$ and $\sigma_t^{(\alpha/2)}(\tau)$.
  The normalization \eqref{eq:normalizedalpha} for $\alpha\!\in\!(0,2)$ follows from subordination \eqref{eq:defpheatalpha}, the normalization when $\alpha\!=\!2$, Tonelli's theorem, and the fact that $\sigma_t^{(\alpha/2)}$ is a probability density.
  The Chapman--Kolmogorov equation \eqref{eq:chapman} for $\alpha\!\in\!(0,2)$ follows from subordination \eqref{eq:defpheatalpha} and the corresponding equation for $\alpha\!=\!2$ since
  \begin{align}
    \begin{split}
      & \int_0^\infty p_\zeta^{(\alpha)}(t,r,z) p_\zeta^{(\alpha)}(t',z,s)z^{2\zeta}\,dz \\
      & \quad = \int_0^\infty  d\tau \int_0^\infty d\tau'\, \sigma_t^{(\alpha/2)}(\tau)\, \sigma_{t'}^{(\alpha/2)}(\tau')\int_0^\infty p_\zeta^{(2)}(\tau,r,z) p_\zeta^{(2)}(\tau',z,s)z^{2\zeta}\,dz \\
      & \quad = \int_0^\infty d\tau \int_0^\infty d\tau'\, \sigma_t^{(\alpha/2)}(\tau)\, \sigma_{t'}^{(\alpha/2)}(\tau')\, p_\zeta^{(2)}(\tau+\tau',r,s)
      = p_\zeta^{(\alpha)}(t+t',r,s).
    \end{split}
  \end{align}
  In the last step, we used that $\sigma_t^{(\alpha/2)}(\tau)$ is a convolution semigroup.
  Finally, the scaling relation \eqref{eq:scalingalpha} for $\alpha\in(0,2)$ follows from the corresponding relation \eqref{eq:scalingalpha2} for $\alpha=2$ and the scaling of $\sigma_t^{(\alpha/2)}$ (see \eqref{eq:subordinatorscaling}). Indeed,
  \begin{align*}
    p_\zeta^{(\alpha)}(t,r,s)
    & = \int_0^\infty p_\zeta^{(2)}(\tau,r,s)\,\sigma_t^{(\frac\alpha2)}(\tau)\,d\tau
      = t^{-\frac{2\zeta+1}{\alpha}}\int_0^\infty p_\zeta^{(2)}\left(\frac{\tau}{t^{2/\alpha}},\frac{r}{t^{1/\alpha}},\frac{s}{t^{1/\alpha}}\right)\,\sigma_t^{(\frac\alpha2)}(\tau)\,d\tau \\
    & = t^{-\frac{2\zeta+1}{\alpha}}\int_0^\infty p_\zeta^{(2)}\left(\tau,\frac{r}{t^{1/\alpha}},\frac{s}{t^{1/\alpha}}\right)\,\sigma_1^{(\frac\alpha2)}(\tau)\,d\tau
      = t^{-\frac{2\zeta+1}{\alpha}} p_\zeta^{(\alpha)}\left(1,t^{-1/\alpha} r,t^{-1/\alpha} s\right).
  \end{align*}
\end{proof}

\subsection{Analysis of the integral kernel of $\ci_{\ell,\sigma}$}
\label{s:kinenheatkernel}

In the following, we explicitly compute the integral kernel $\nu_{\frac{d-1}{2}+\ell}(r,s)$ in the functional $\ci_{\ell,\sigma}$ in \eqref{eq:defiellsigmaalphal2} and provide sharp bounds for it.
To these ends, we recall and use the pointwise limit \eqref{eq:subordinatorquotientexplicit}, i.e.,
\begin{align}
  \label{eq:derivativesubordinator}
  \lim_{t\to0}\frac{\sigma_t^{(\alpha/2)}(\tau)}{t}
  = \frac{\Gamma(\alpha/2+1)\, \sin\left(\pi \alpha/2\right)}{\pi \, \tau^{1+\alpha/2}},
  \quad \tau>0,
\end{align}
and the uniform bound \eqref{eq:subordinatorbounds2}, i.e.,
\begin{align}
  \label{eq:uniformboundsubordinator}
  t^{-1} \sigma_t^{(\alpha/2)}(\tau) \lesssim_\alpha \tau^{-1-\alpha/2}, \quad t,\tau>0.
\end{align}

\begin{proposition}
  \label{potentialexpansionsphericalharmonics}
  Let $\zeta\in(-1/2,\infty)$, $\alpha\in(0,2)$, and $r,s>0$ with $r\neq s$. Then,
  \begin{align}
    \label{eq:defnuell1}
    \nu_{\zeta}(r,s)
    & := \lim_{t\searrow0}\frac{p_\zeta^{(\alpha)}(t,r,s)}{t}
      = \lim_{t\searrow0} \int_0^\infty p_\zeta^{(2)}(\tau,r,s)\frac{\sigma_t^{(\alpha/2)}(\tau)}{t}\,d\tau \\
    \label{eq:defnuell2}
    \begin{split}
      & \ = 2^{1+\alpha} \frac{\Gamma\left(\frac\alpha2+1\right)\,\sin\left(\frac{\pi\alpha}{2}\right)}{\pi}\, \Gamma\left(\zeta+\frac\alpha2+\frac12\right)\, (r^2+s^2)^{-(\zeta+\frac\alpha2+\frac12)} \\
      & \qquad \times\, _2\tilde F_1\left(\frac{\zeta+\alpha/2+1/2}{2}, \frac{\zeta+\alpha/2+3/2}{2}; \zeta+\frac12; \frac{4(rs)^2}{(r^2+s^2)^2}\right)
    \end{split} \\
    \label{eq:defnuell3}
    & \ \sim_{\zeta,\alpha} \frac{1}{(r+s)^{2\zeta} \cdot |r-s|^{1+\alpha}}.
  \end{align}\index{$\nu_\zeta(r,s)$}Moreover,
  \begin{align}
    \label{eq:defnuell4}
    \frac{p_\zeta^{(\alpha)}(t,r,s)}{t}
    \lesssim_{\zeta,\alpha} \nu_{\zeta}(r,s), \quad t>0,
  \end{align}
  and, for $\zeta\geq\zeta'>-1/2$,
  \begin{align}
    \label{eq:nuellmonotony}
    \nu_{\zeta}(r,s)\lesssim_{\zeta,\zeta',\alpha}\nu_{\zeta'}(r,s).
  \end{align}
\end{proposition}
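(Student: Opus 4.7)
The plan is to justify the interchange of limit and integral in \eqref{eq:defnuell1} by dominated convergence, to evaluate the resulting Laplace--Bessel integral in closed form to obtain \eqref{eq:defnuell2}, and then to read off \eqref{eq:defnuell3}--\eqref{eq:nuellmonotony} from the explicit hypergeometric expression. Starting from the subordination identity \eqref{eq:defpheatalpha}, I would write
\begin{align*}
  \frac{p_\zeta^{(\alpha)}(t,r,s)}{t}
  = \int_0^\infty p_\zeta^{(2)}(\tau,r,s)\,\frac{\sigma_t^{(\alpha/2)}(\tau)}{t}\,d\tau,
\end{align*}
invoke the pointwise convergence \eqref{eq:derivativesubordinator}, and dominate the integrand by a constant multiple of $p_\zeta^{(2)}(\tau,r,s)\,\tau^{-1-\alpha/2}$ using the uniform bound \eqref{eq:uniformboundsubordinator}. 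Integrability of this majorant is verified in two regimes via classical asymptotics of the modified Bessel function: $I_{\zeta-1/2}(x)\sim(x/2)^{\zeta-1/2}/\Gamma(\zeta+1/2)$ as $x\to 0$ yields $p_\zeta^{(2)}(\tau,r,s)\sim c_\zeta\,\tau^{-\zeta-1/2}$ as $\tau\to\infty$, so the majorant decays like $\tau^{-\zeta-3/2-\alpha/2}$ at infinity (integrable since $\zeta>-1/2$ and $\alpha>0$); meanwhile $I_{\zeta-1/2}(x)\sim e^x/\sqrt{2\pi x}$ as $x\to\infty$, combined with the Gaussian factor in \eqref{eq:defpheatalpha2}, produces the decisive short-time factor $e^{-(r-s)^2/(4\tau)}$ as $\tau\searrow 0$, where the hypothesis $r\neq s$ enters crucially.

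After passing the limit inside, the substitution $u=1/(4\tau)$ brings the integrand into the form
\begin{align*}
  \frac{2^{1+\alpha}\,\Gamma(\alpha/2+1)\sin(\pi\alpha/2)}{\pi}\,(rs)^{1/2-\zeta}\int_0^\infty u^{\alpha/2}\,e^{-(r^2+s^2)u}\,I_{\zeta-1/2}(2rs\,u)\,du,
\end{align*}
which is a Laplace transform of $u^{\mu-1}I_\nu(\beta u)$ with $\mu=\alpha/2+1$, $\nu=\zeta-1/2$, $\beta=2rs$, evaluated at $p=r^2+s^2$. A standard Laplace--Bessel identity, which can be derived by termwise integration of the series for $I_\nu$ followed by the Legendre duplication formula, reads
\begin{align*}
  \int_0^\infty u^{\mu-1}e^{-pu}I_\nu(\beta u)\,du
  = \frac{(\beta/2)^\nu\,\Gamma(\mu+\nu)}{p^{\mu+\nu}\,\Gamma(\nu+1)}\,{}_2F_1\!\left(\tfrac{\mu+\nu}{2},\tfrac{\mu+\nu+1}{2};\nu+1;\tfrac{\beta^2}{p^2}\right),
\end{align*}
valid when $p>\beta>0$ and $\Re(\mu+\nu)>0$. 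The condition $\beta<p$ is exactly $2rs<r^2+s^2$, i.e.~$r\neq s$. Consolidating the prefactors from \eqref{eq:derivativesubordinator} and the substitution produces \eqref{eq:defnuell2}.

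For \eqref{eq:defnuell3}, set $z:=(\beta/p)^2=4(rs)^2/(r^2+s^2)^2\in[0,1)$ and let $a,b,c$ denote the parameters of ${}_2\tilde F_1$ in \eqref{eq:defnuell2}, so that $c-a-b=-(1+\alpha)/2<0$. Near the diagonal ($z\to 1^-$, i.e.~$r\to s$), the standard Kummer connection formula at $z=1$ (see \cite[\S15.8]{NIST:DLMF}) gives ${}_2\tilde F_1(a,b;c;z)\sim\Gamma((1+\alpha)/2)(1-z)^{-(1+\alpha)/2}/(\Gamma(a)\Gamma(b))$; since $(1-z)=(r^2-s^2)^2/(r^2+s^2)^2$, multiplying by $(r^2+s^2)^{-(\zeta+(1+\alpha)/2)}$ yields the behaviour $(r+s)^{-2\zeta}|r-s|^{-1-\alpha}$. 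Away from the diagonal (say $r\ll s$), $z$ is bounded away from $1$, ${}_2\tilde F_1(a,b;c;z)\sim 1/\Gamma(c)$, and one obtains $\nu_\zeta(r,s)\sim(r^2+s^2)^{-(\zeta+(1+\alpha)/2)}\sim(r+s)^{-2\zeta}|r-s|^{-1-\alpha}$; positivity and monotonicity of $z\mapsto{}_2F_1(a,b;c;z)$ on $[0,1)$ interpolate the two regimes. The uniform bound \eqref{eq:defnuell4} is obtained by running the same computation \emph{without} passing to the limit, using \eqref{eq:uniformboundsubordinator} in place of \eqref{eq:derivativesubordinator}, while \eqref{eq:nuellmonotony} follows from the explicit formula \eqref{eq:defnuell2} via the monotonicity of the Gamma-ratio $\Gamma(\zeta+\alpha/2+1/2)/\Gamma(\zeta+1/2)$ in $\zeta$ combined with \eqref{eq:defnuell3}. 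The main obstacle is the closed-form evaluation of the Laplace--Bessel integral and the careful bookkeeping of all the gamma-function prefactors, together with the $z\to 1^-$ expansion of ${}_2F_1$, which acquires logarithmic corrections when $c-a-b$ hits a non-positive integer (notably at $\alpha=1$) and must be handled as a separate limiting case.
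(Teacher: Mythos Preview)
Your approach is essentially the same as the paper's: dominated convergence via \eqref{eq:derivativesubordinator}--\eqref{eq:uniformboundsubordinator}, the substitution $\tau\mapsto 1/\tau$, and the Laplace--Bessel identity (the paper quotes \cite[(2.15.3.2)]{PrudnikovetalVol21988}) all match the argument in Section~\ref{s:selfdomconvsubordinatordensity}, and \eqref{eq:defnuell4} indeed drops out of the same majorization.

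The one place where your route differs is in deriving the two-sided bound \eqref{eq:defnuell3}. The paper first applies the quadratic transformation \cite[(15.8.15)]{NIST:DLMF} to convert the argument $4(rs)^2/(r^2+s^2)^2$ into $(s/r)^2$ (assuming $r>s$), and only then invokes the connection formula \cite[(15.10.21)]{NIST:DLMF}; you instead analyze the original ${}_2\tilde F_1$ in \eqref{eq:defnuell2} directly near $z=1$. Both are correct. Your worry about logarithmic corrections at $\alpha=1$ (where $c-a-b=-(1+\alpha)/2=-1$) is unnecessary for the $\sim$ estimate: even when $c-a-b$ is a negative integer, the leading singular behaviour ${}_2F_1(a,b;c;z)\sim C\,(1-z)^{c-a-b}$ as $z\to 1^-$ persists (see \cite[(15.4.23)]{NIST:DLMF}), so no separate limiting case is needed. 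Incidentally, the paper's transformed parameters have $c-a-b=-(1+\alpha)$, which is an integer at $\alpha=1$ as well, and the paper does not treat it separately either.
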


\begin{proof}
  Formula~\eqref{eq:defnuell2} follows from dominated convergence using \eqref{eq:defpheatalpha2}, \eqref{eq:derivativesubordinator}, and \eqref{eq:uniformboundsubordinator}. The computation is contained in Section~\ref{s:selfdomconvsubordinatordensity}.
  In particular, \eqref{eq:defnuell4} follows.

  Now we prove \eqref{eq:defnuell3}. Without loss of generality, let $r>s$. First, we quote
  \begin{align}
    _2\tilde F_1\left(\frac a2,\frac{a+1}{2}; a-b+1;\frac{4z}{(1+z)^2}\right)
    = (1+z)^a \, _2\tilde F_1\left(a,b;a-b+1;z\right), \quad |z|<1,
  \end{align}
  with $a=\zeta+(\alpha+1)/2$, $b=(\alpha+2)/2$, and $z=(s/r)^2\in(0,1)$; see \cite[(15.8.15)]{NIST:DLMF}. Thus, with $c_{\zeta,\alpha}:= 2^{1+\alpha} \pi^{-1}\, \sin\left(\frac{\pi\alpha}{2}\right)\, \Gamma\left(\frac\alpha2+1\right)\, \Gamma\left(\zeta+\frac\alpha2+\frac12\right)$, we obtain
  \begin{align}
    \label{eq:defnuell3aux1}
    \nu_{\zeta}(r,s)
    = c_{\zeta,\alpha}\, (rs)^{-\zeta} \, \left(\frac{r\wedge s}{r\vee s}\right)^{\zeta} \, \frac{1}{(r\vee s)^{\alpha+1}} \, _2\tilde F_1\left(\zeta+\frac{\alpha+1}{2},\frac{\alpha+2}{2};\zeta+\frac12;\frac{s^2}{r^2}\right).
  \end{align}
  By distinguishing between $s/r\in(0,1/2]$ and $s/r\in[1/2,1)$, we have
  \begin{align}
    \label{eq:hypergeomboundclose1}
    _2\tilde F_1\left(\zeta+\frac{\alpha+1}{2},\frac{\alpha+2}{2};\zeta+\frac12;\frac{s^2}{r^2}\right) \sim_{\zeta,\alpha} |1-s^2/r^2|^{-1-\alpha}.
  \end{align}
  Indeed, for $s/r\leq1/2$, this follows from the definition of $_2\tilde F_1$ and for $s/r\in[1/2,1)$, we use \cite[(15.10.21)]{NIST:DLMF}, i.e.,
  \begin{align}
    \begin{split}
      & _2\tilde F_1\left(a,b;c,z\right)
      = \frac{\Gamma(c)\Gamma(c-a-b)}{\Gamma(c-a)\Gamma(c-b)} \cdot \,_2\tilde F_1\left(a,b;a+b-c+1,1-z\right) \\
      & \qquad + \frac{\Gamma(c)\Gamma(a+b-c) (1-z)^{c-a-b}}{\Gamma(a)\Gamma(b)} \cdot \,_2\tilde F_1\left(c-a,c-b;c-a-b+1,1-z\right)
    \end{split}
  \end{align}
  for all $z\in\C$ obeying $|\arg(z)|<\pi$ and $|\arg(1-z)|<\pi$, and the definition \cite[(15.2.2)]{NIST:DLMF} again. Combining \eqref{eq:defnuell3aux1} and \eqref{eq:hypergeomboundclose1} with
  $$
  (rs)^{-\zeta} \, \left(\frac{r\wedge s}{r\vee s}\right)^{\zeta}
  = \frac{1}{(r\vee s)^{2\zeta}}
  \sim \frac{1}{(r + s)^{2\zeta}}
  $$
  and the bound
  \begin{align}
    \begin{split}
      (r\vee s)^{-1-\alpha} |1-s^2/r^2|^{-1-\alpha}
      & = r^{1+\alpha}|r^2-s^2|^{-1-\alpha} \sim |r-s|^{-1-\alpha}
        \quad \text{for}\ 0 < s < r,
    \end{split}
  \end{align}
  we obtain \eqref{eq:defnuell3}.
  The monotonicity \eqref{eq:nuellmonotony} follows from \eqref{eq:defnuell3}.
\end{proof}

\subsection{Definition of quadratic forms on $\R_+$}
\label{s:defquadformsemigroup}

By general theory \cite[Section~1.3]{Fukushimaetal2011}, the quadratic form associated to $p_\zeta^{(\alpha)}(t,\cdot,\cdot)$,
\begin{align}
  \label{eq:defezeta}
  \begin{split}
    \ce_\zeta[u]
    & := \lim_{t\to0}\frac1t\langle u,u-p_\zeta^{(\alpha)}(t,\cdot,\cdot)u\rangle_{L^2(\R_+,r^{2\zeta}dr)},\quad u\in L^2(\R_+,r^{2\zeta}dr),
  \end{split}
\end{align}\index{$\ce_\zeta$}is well-defined. Moreover,
\begin{align}
  \label{eq:ezetalimit}
  \ce_\zeta[u] = \int_0^\infty dr\, r^{2\zeta} \int_0^\infty ds\, s^{2\zeta} |u(r)-u(s)|^2\,\nu_{\zeta}(r,s) \quad \text{for}\ \alpha<2,
\end{align}
by \eqref{eq:defnuell4} and dominated convergence, if the right-hand side is finite, or---in the opposite case---by Fatou's lemma.

\section{Ground state transformation for $\ce_\zeta$}
\label{s:optimalhardyalpha2}

The proof of our main result Theorem~\ref{gstransformreformulated} crucially relies on a ground state transform for $\ce_\zeta$, stated in Theorem~\ref{hbetagammatransformed} below, and the following computations.

\begin{proposition}
  \label{hbetagammatransformedcomputations}
  Let $\zeta\in(-1/2,\infty)$, $\alpha\in(0,2]$, $\gamma\in(-1,\infty)$, and $\beta\in(0,(2\zeta-\gamma)/\alpha)$.
  Then,
  \begin{align}
    \label{eq:defhbetagammaalphatransformed}
    \begin{split}
      h_{\beta,\gamma}(r)
      & := \int_0^\infty \frac{dt}{t}\, t^\beta \int_0^\infty ds\, s^\gamma p_\zeta^{(\alpha)}(t,r,s)
      = C^{(\alpha)}\left(\beta,\gamma,\zeta\right) r^{\alpha\beta+\gamma-2\zeta}, \quad r>0,
    \end{split}
  \end{align}
  where
  \begin{align}
    \label{eq:timeshiftconstant1resultalpha}
    C^{(\alpha)}(\beta,\gamma,\zeta)
    & := \frac{\Gamma(\beta)}{\Gamma(\frac{\alpha\beta}{2})} C\left(\frac{\alpha\beta}{2},\gamma,\zeta\right), \quad \text{with} \\
    \label{eq:timeshiftconstant1result}
    C(\beta,\gamma,\zeta)
    & := \frac{2^{-2\beta} \Gamma (\beta) \Gamma \left(\frac{1}{2} (\gamma+1)\right) \Gamma \left(\frac{1}{2} (2\zeta-2\beta-\gamma)\right)}{\Gamma \left(\frac{1}{2}(2\zeta-\gamma)\right) \Gamma\left(\frac{1}{2}(2\beta+\gamma+1)\right)}.
  \end{align}
  If we even have $\beta\in(1,(2\zeta-\gamma)/\alpha)$, then
  \begin{align}
    \label{eq:hardypottransformedalpha}
    \begin{split}
      q_{\beta,\gamma}(r)
      & := \frac{(\beta-1) h_{\beta-1,\gamma}(r)}{h_{\beta,\gamma}(r)} \\
      & \ = \frac{2^{\alpha} \Gamma \left(\frac{1}{2}(2\zeta+\alpha-\alpha\beta-\gamma)\right) \Gamma \left(\frac{1}{2}(\alpha\beta+\gamma+1)\right)}{\Gamma \left(\frac{1}{2}(2\zeta-\alpha\beta-\gamma)\right) \Gamma \left(\frac{1}{2}(\alpha\beta+\gamma+1-\alpha)\right)} r^{-\alpha}, \quad r>0.
    \end{split}
  \end{align}
\end{proposition}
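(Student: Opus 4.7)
The plan is to reduce the case $\alpha\in(0,2)$ to $\alpha=2$ via subordination, compute the $\alpha=2$ quantity explicitly using scaling together with classical Bessel and confluent hypergeometric integrals, and then deduce the ratio formula by elementary Gamma-function manipulations.

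First, for $\alpha<2$ I would insert \eqref{eq:defpheatalpha} into the definition of $h_{\beta,\gamma}$ and use Tonelli (all integrands are nonnegative) to perform the $t$-integral before the $\tau$- and $s$-integrals. Using the self-similarity of $\sigma_t^{(\alpha/2)}$ and the Mellin identity $\int_0^\infty u^{-s}\sigma_1^{(\alpha/2)}(u)\,du = \tfrac{2}{\alpha}\Gamma(2s/\alpha)/\Gamma(s)$ (obtained from \eqref{eq:subordination} by testing against $\lambda^{s-1}$), a change of variables yields
\[
  \int_0^\infty t^{\beta-1}\sigma_t^{(\alpha/2)}(\tau)\,dt = \frac{\Gamma(\beta)}{\Gamma(\alpha\beta/2)}\,\tau^{\alpha\beta/2-1}.
\]
Hence $h_{\beta,\gamma}(r)$ equals $\Gamma(\beta)/\Gamma(\alpha\beta/2)$ times its $\alpha=2$ analogue with $\beta$ replaced by $\alpha\beta/2$, which accounts for the prefactor in \eqref{eq:timeshiftconstant1resultalpha} and reduces the proof to the Gaussian case.

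For the $\alpha=2$ computation I would use \eqref{eq:scalingalpha2} together with the substitutions $u=s/\sqrt{t}$ and then $v=r/\sqrt{t}$ to factor out $r^{2\beta+\gamma-2\zeta}$, leaving the one-dimensional integral $C(\beta,\gamma,\zeta) = 2\int_0^\infty v^{2\zeta-2\beta-\gamma-1}F(v)\,dv$ with $F(v):=\int_0^\infty u^\gamma p_\zeta^{(2)}(1,v,u)\,du$. Substituting \eqref{eq:defpheatalpha2} into $F$ and applying the Weber--Sonine formula for $\int_0^\infty u^{\mu-1}\me{-pu^2}I_\nu(cu)\,du$ (obtained from Gradshteyn--Ryzhik 6.631.4 via $J_\nu(iz)=i^\nu I_\nu(z)$), followed by Kummer's transformation $\me{-w}\,{}_1F_1(a;b;w)={}_1F_1(b-a;b;-w)$, produces
\[
  F(v) = \frac{\Gamma((\gamma+1)/2)}{2^{2\zeta-\gamma}\,\Gamma(\zeta+1/2)}\,{}_1F_1\!\left(\tfrac{2\zeta-\gamma}{2};\zeta+\tfrac12;-\tfrac{v^2}{4}\right).
\]
The substitution $w=v^2/4$ then turns the outer integral into the Mellin transform $\int_0^\infty w^{s-1}\,{}_1F_1(a;b;-w)\,dw = \Gamma(s)\Gamma(a-s)\Gamma(b)/[\Gamma(a)\Gamma(b-s)]$ with $s=\zeta-\beta-\gamma/2$, $a=\zeta-\gamma/2$, $b=\zeta+1/2$; the $\Gamma(\zeta+1/2)$ factors cancel and \eqref{eq:timeshiftconstant1result} drops out.

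For the ratio \eqref{eq:hardypottransformedalpha}, I would insert the closed form of $C^{(\alpha)}$ into $(\beta-1)h_{\beta-1,\gamma}/h_{\beta,\gamma}$, use $(\beta-1)\Gamma(\beta-1)=\Gamma(\beta)$ to cancel the prefactor ratios, and observe that the $r$-power collapses to $r^{\alpha(\beta-1)-\alpha\beta}=r^{-\alpha}$; the surviving Gamma functions are exactly the four appearing in \eqref{eq:hardypottransformedalpha}. The main obstacle is the bookkeeping of Gamma prefactors and powers of $2$ during the explicit evaluation of $C$: obtaining the overall factor $2^{-2\beta}$ correctly requires tracking the two substitutions (inside $F$ and in the outer $v$-integral) together with the precise constants in the Weber--Sonine formula. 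One must also check that the hypotheses $\beta\in(0,(2\zeta-\gamma)/\alpha)$, $\gamma>-1$, $\zeta>-1/2$ place every Gamma argument in the positive range and the Mellin variable $s=\zeta-\beta-\gamma/2$ within its strip $0<s<a=\zeta-\gamma/2$; the Tonelli interchanges themselves are routine by nonnegativity.
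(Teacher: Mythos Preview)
Your proposal is correct and follows essentially the same route as the paper's proof: reduce $\alpha<2$ to $\alpha=2$ via the subordinator Mellin identity \eqref{eq:subordinatorpowerintegral}, extract the power of $r$ by scaling, evaluate the inner $s$-integral against the Bessel kernel to obtain a confluent hypergeometric ${}_1F_1$, and finish with the Mellin transform of ${}_1F_1$; the ratio \eqref{eq:hardypottransformedalpha} is then pure Gamma bookkeeping. The only cosmetic difference is that the paper scales $(s,t)\mapsto(sr,r^2t)$ to set $r=1$ and then integrates first in $s$ and then in $t$, whereas you scale via $u=s/\sqrt{t}$, $v=r/\sqrt{t}$ and integrate in $u$ then $v$; these are equivalent changes of variables leading to the same two classical integrals (the paper's Appendices~\ref{s:timeshiftconstantaux1}--\ref{s:timeshiftconstantaux2}).
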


\begin{theorem}
  \label{hbetagammatransformed}
  Let $\zeta\in(-1/2,\infty)$, $\alpha\in(0,2]\cap(0,2\zeta+1)$, and $\eta\in[0,\frac{2\zeta+1-\alpha}{2}]$.
  Furthermore, let 
  \begin{align}
    \label{eq:defpsietazeta}
    \Psi_\zeta(\eta) := \frac{2^\alpha\Gamma\left(\frac{2\zeta+1-\eta}{2}\right)\Gamma\left(\frac{\alpha+\eta}{2}\right)}{\Gamma\left(\frac{\eta}{2}\right)\Gamma\left(\frac{2\zeta+1-\eta-\alpha}{2}\right)}.
  \end{align}
  For $\gamma\in(-1,\infty)$ and $\beta\in(1,(2\zeta-\gamma)/\alpha)$, let $h_{\beta,\gamma}(r)$ and $q_{\beta,\gamma}(r)$ be as in \eqref{eq:defhbetagammaalphatransformed} and \eqref{eq:hardypottransformedalpha}, respectively.
  Then the following statements hold.
  \\
  {\rm(1)}
  If
  \begin{align}
    \label{eq:choicesigmabeta2resultalphatransformed}
    \beta
    & = \frac1\alpha\left(2\zeta-\gamma-\eta\right),
  \end{align}
  then, for all $r>0$,
  \begin{align}
    \label{eq:hardypottransformedalpha2}
    h_{\beta,\gamma}(r)
    = C^{(\alpha)}\left(\beta,\gamma,\zeta\right) r^{-\eta}
    \quad \text{and} \quad
    q_{\beta,\gamma}(r)
    = \Psi_\zeta(\eta)r^{-\alpha}.
  \end{align}
  \\
  {\rm(2)}
  Let $h(r):=r^{-\eta}$. Then, for every $u\in L^2(\R_+,r^{2\zeta}\,dr)$, we have the ground state representation
  \begin{align}
    \label{eq:hardyremainderagaintransformed}
    \begin{split}
      & \ce_\zeta[h\cdot u]
      = \Psi_\zeta(\eta)\int_0^\infty dr\, r^{2\zeta}\frac{|h(r)\cdot u(r)|^2}{r^\alpha} \\
      & \quad + \frac12\lim_{t\to0}\int_0^\infty dr\, r^{2\zeta}\int_0^\infty ds\, s^{2\zeta} \frac{p_\zeta^{(\alpha)}(t,r,s)}{t} |u(r) - u(s)|^2 h(r) h(s).
    \end{split}
  \end{align}
  If $\alpha<2$, then
  \begin{align}
    \label{eq:hardyremainderagaintransformedlimit}
    \begin{split}
      & \ce_\zeta[h\cdot u]
      = \Psi_\zeta(\eta)\int_0^\infty dr\, r^{2\zeta}\frac{|h(r)\cdot u(r)|^2}{r^\alpha} \\
      & \quad + \frac12\int_0^\infty dr\, r^{2\zeta}\int_0^\infty ds\, s^{2\zeta}\, \nu_{\zeta}(r,s) |u(r) - u(s)|^2 h(r) h(s).
    \end{split}
  \end{align}
\end{theorem}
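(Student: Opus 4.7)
The overall strategy is to apply the abstract ground state representation framework of Bogdan, Dyda, and Kim \cite[Proposition~5]{Bogdanetal2016} to the symmetric probability transition density $p_\zeta^{(\alpha)}$ on the space $(\R_+, r^{2\zeta}dr)$. In that framework, one constructs a positive supermedian function by time-integrating the heat semigroup against a suitable reference function; in our setting this role is played by $h_{\beta,\gamma}$ from \eqref{eq:defhbetagammaalphatransformed}, and the associated Schr\"odinger perturbation is $q_{\beta,\gamma}$. Part~(1) identifies the parameters $(\beta,\gamma)$ for which $h_{\beta,\gamma}$ has the prescribed power-law shape $r^{-\eta}$ and pins down the corresponding coupling $\Psi_\zeta(\eta)$; part~(2) is then the instantiation of the abstract representation, combined with a dominated-convergence step to pass from the time-regularized form to the explicit jump kernel $\nu_\zeta$ when $\alpha<2$.

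For part~(1), I would substitute $\beta=(2\zeta-\gamma-\eta)/\alpha$ directly into \eqref{eq:defhbetagammaalphatransformed}, which makes the exponent $\alpha\beta+\gamma-2\zeta$ collapse to $-\eta$, yielding $h_{\beta,\gamma}(r)=C^{(\alpha)}(\beta,\gamma,\zeta)r^{-\eta}$. For $q_{\beta,\gamma}$, substituting $\alpha\beta+\gamma=2\zeta-\eta$ into the Gamma-function arguments of \eqref{eq:hardypottransformedalpha} and comparing with \eqref{eq:defpsietazeta} gives $q_{\beta,\gamma}(r)=\Psi_\zeta(\eta)r^{-\alpha}$. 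The admissibility constraint $\beta\in(1,(2\zeta-\gamma)/\alpha)$ translates into $\gamma<2\zeta-\eta-\alpha$ (always achievable) and $\eta\in(0,(2\zeta+1-\alpha)/2)$; the boundary values $\eta\in\{0,(2\zeta+1-\alpha)/2\}$ can be recovered by continuity in $\eta$ after the representation in part~(2) is established (note that at $\eta=0$, where $\Psi_\zeta$ vanishes, the identity reduces to the definition of $\ce_\zeta$).

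For part~(2), I would first verify that $h_{\beta,\gamma}$ is positive and supermedian for $p_\zeta^{(\alpha)}$, the latter being a consequence of the Chapman--Kolmogorov identity \eqref{eq:chapman} and Tonelli's theorem applied to the defining integral \eqref{eq:defhbetagammaalphatransformed}. Invoking the abstract ground state representation of \cite[Proposition~5]{Bogdanetal2016} then yields \eqref{eq:hardyremainderagaintransformed} once the proportionality constant $C^{(\alpha)}(\beta,\gamma,\zeta)$ is absorbed into $u$. To deduce \eqref{eq:hardyremainderagaintransformedlimit} for $\alpha<2$, I would pass the limit inside the double integral by dominated convergence: the pointwise limit \eqref{eq:defnuell1} gives $t^{-1}p_\zeta^{(\alpha)}(t,r,s)\to\nu_\zeta(r,s)$, and the uniform pointwise bound \eqref{eq:defnuell4} supplies an integrable dominant provided the right-hand side of \eqref{eq:hardyremainderagaintransformed} is finite. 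I expect the main technical obstacle to be the careful verification that all hypotheses of \cite[Proposition~5]{Bogdanetal2016} are satisfied in our geometry---especially supermedianity and the finiteness conditions needed to legitimize the limit exchange---as well as extending the result to the full closed parameter interval $\eta\in[0,(2\zeta+1-\alpha)/2]$ by continuity; once this bookkeeping is settled, the argument is essentially algebraic.
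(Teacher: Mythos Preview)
Your approach is essentially the paper's: part~(1) is direct substitution, and part~(2) invokes the abstract ground state representation from \cite{Bogdanetal2016} followed by a limit passage. Two small corrections: the relevant abstract result is \cite[Theorem~2]{Bogdanetal2016} (Proposition~5 there is the $\R^d$ application, not the general framework), and for \eqref{eq:hardyremainderagaintransformedlimit} your dominated-convergence argument only handles the case where the right-hand side is finite---when it is infinite, one needs Fatou's lemma (applied to $t^{-1}p_\zeta^{(\alpha)}(t,r,s)\to\nu_\zeta(r,s)$) to conclude that both sides equal $+\infty$. The continuity argument for the endpoint $\eta=(2\zeta+1-\alpha)/2$ is unnecessary: with $\gamma$ chosen in $(-1,(2\zeta-1-\alpha)/2)$ one has $\beta>1$ for all $\eta\in(0,(2\zeta+1-\alpha)/2]$, so the abstract theorem applies directly; only $\eta=0$ is handled separately (trivially, as you note).
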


Before we prove Proposition~\ref{hbetagammatransformedcomputations} and Theorem~\ref{hbetagammatransformed}, some remarks are in order.
\begin{remarks}
  (1) The computations of \eqref{eq:defhbetagammaalphatransformed} and \eqref{eq:hardypottransformedalpha} require $\gamma>-1$. Then, of course, $\mu(dr)=r^\gamma\,dr$ is a $\sigma$-finite measure.
  \\
  (2) Let $\zeta=(d-1+2\ell)/2$ and $\eta=\ell+\sigma$ with $d\in\N$ and $\ell>-1/2$. Since $\beta \in (1,\frac{2\zeta-\gamma}{\alpha})$ and $\gamma>-1$, the choice \eqref{eq:choicesigmabeta2resultalphatransformed} implies the restriction $\sigma\in(-\ell,d+\ell-\alpha)$. This should be compared with the admissible range $\sigma\in(-\ell-\alpha,(d-\alpha)/2]$ (or $\sigma\in[(d-\alpha)/2,d+\ell-\alpha)$, by symmetry) for $\alpha<2$ and $\sigma\leq(d-2)/2$ for $\alpha=2$, see Proposition \ref{propertiesphi}. Therefore, we require $\sigma\in(-\ell,(d-\alpha)/2]$, i.e., an attractive Hardy potential.
  \\
  (3) As in \cite[p.~237]{Bogdanetal2016}, one can show that the choice \eqref{eq:choicesigmabeta2resultalphatransformed} maximizes the prefactor (which depends on $\beta$ and $\gamma$ only through $\alpha\beta+\gamma$) on the right-hand side of \eqref{eq:hardypottransformedalpha}.
  \\
  (4) In the context of Theorem~\ref{gstransformreformulated}, we have $\zeta=(d-1+2\ell)/2$ and $\eta=\ell+\sigma$, and thus $\Psi_\zeta(\eta)=\Phi_\ell(\sigma)$. In particular, in view of \eqref{eq:sigmagammaellneg}, $\eta\mapsto\Psi_\zeta(\eta)$ is increasing for $\eta\leq(2\zeta+1-\alpha)/2$.
  \\
  (5) For $\zeta=(d-1+2\ell)/2$ and $\eta=\ell+\sigma$ with $d=1$ and $\ell=0$, the choice \eqref{eq:choicesigmabeta2resultalphatransformed} coincides precisely with the choice made in \cite[Section~1.3]{JakubowskiMaciocha2022} upon replacing $\gamma$ with $-\gamma$, and $\sigma$ with $\beta_{\rm JM}-\alpha/2$, where $\beta_{\rm JM}$ stands for the variable $\beta$ used in \cite{JakubowskiMaciocha2022}.
\end{remarks}

To prepare the proof of Proposition~\ref{hbetagammatransformedcomputations} when $\alpha\in(0,2)$, we note that for $\beta>0$ and $\tau>0$, the subordinator density function $\sigma_t^{(\alpha/2)}(\tau)$ satisfies
\begin{align}
  \label{eq:subordinatorpowerintegral}
  & \int_0^\infty t^\beta \sigma_t^{(\alpha/2)}(\tau)\,\frac{dt}{t}
    = \frac{\Gamma(\beta)}{\Gamma(\alpha\beta/2)\tau^{1-\alpha\beta/2}}, \quad \tau>0.
\end{align}
A calculation leading to \eqref{eq:subordinatorpowerintegral} can be found in \cite[Proof of Proposition 5]{Bogdanetal2016}. Note that the assumption $\beta<d/\alpha$ therein is irrelevant to \eqref{eq:subordinatorpowerintegral},
see also \cite[p.~1003]{JakubowskiWang2020}.

\begin{proof}[Proof of Proposition~\ref{hbetagammatransformedcomputations}]
  We begin with the proof of \eqref{eq:defhbetagammaalphatransformed}.
  We first consider $\alpha=2$ and note $C^{(2)}(\beta,\gamma,\zeta)=C(\beta,\gamma,\zeta)$. By scaling $s\mapsto sr$ and $t\mapsto r^2t$, we obtain
  \begin{align}
    \label{eq:defhbetagamma}
    \begin{split}
      h_{\beta,\gamma}(r)
      & = r^{2\beta+\gamma-2\zeta}\int_0^\infty \frac{dt}{t}\, t^\beta\int_0^\infty ds\, s^\gamma p_\zeta^{(2)}(t,1,s)
      = C(\beta,\gamma,\zeta) r^{2\beta+\gamma-2\zeta},
    \end{split}
  \end{align}
  with
  \begin{align}
    \label{eq:timeshiftconstant1}
    \begin{split}
      C(\beta,\gamma,\zeta)
      & = \int_0^\infty \frac{dt}{t}\, t^\beta\int_0^\infty ds\, s^{\gamma-\zeta} \cdot \frac{s^{1/2}}{2t}\me{-\frac{s^2+1}{4t}} I_{\zeta-1/2}\left(\frac{s}{2t}\right).
    \end{split}
  \end{align}
  To compute $C(\beta,\gamma,\zeta)$, we first integrate over $s$ (see Appendix~\ref{s:timeshiftconstantaux1}) and obtain
  \begin{align}
    \label{eq:timeshiftconstantaux1}
    \begin{split}
      & \int_0^\infty ds\, s^{\gamma-\zeta} \cdot \frac{s^{1/2}}{2t}\me{-\frac{s^2+1}{4t}} I_{\zeta-1/2}\left(\frac{s}{2t}\right) \\
      & \quad = 2^{\gamma-2\zeta} \Gamma \left(\frac{\gamma+1}{2}\right) t^{\frac{\gamma-2\zeta}{2}} \, _1\tilde{F}_1\left(\frac{2\zeta-\gamma}{2};\zeta+\frac12;-\frac{1}{4t}\right),
    \end{split}
  \end{align}
  whenever $\gamma>-1$. Plugging this into \eqref{eq:timeshiftconstant1} gives \eqref{eq:timeshiftconstant1result}, that is,
  \begin{align}
    \label{eq:timeshiftconstantaux2}
    \begin{split}
      & \int_0^\infty \frac{dt}{t}\, t^\beta \cdot 2^{\gamma-2\zeta} \Gamma \left(\frac{\gamma+1}{2}\right) t^{\frac{\gamma-2\zeta}{2}} \, _1\tilde{F}_1\left(\frac{2\zeta-\gamma}{2};\zeta+\frac12;-\frac{1}{4t}\right)
        = C(\beta,\gamma,\zeta),
    \end{split}
  \end{align}
  whenever $\beta\in(0,(2\zeta-\gamma)/2)$ (see Appendix~\ref{s:timeshiftconstantaux2}). This concludes the proof for $\alpha=2$. For $\alpha<2$, we use subordination and obtain
   \begin{align}
    \begin{split}
      h_{\beta,\gamma}(r)
      & = \int_0^\infty \frac{dt}{t}\, t^\beta \int_0^\infty ds\, s^\gamma p_\zeta^{(\alpha)}(t,r,s) \\
      & = \int_0^\infty ds\, s^\gamma\int_0^\infty d\tau\, p_\zeta^{(2)}(\tau,r,s) \int_0^\infty \frac{dt}{t}\, t^\beta \frac{\sigma_t^{(\alpha/2)}(\tau)}{d\tau} \\
      & = \frac{\Gamma(\beta)}{\Gamma(\frac{\alpha\beta}{2})} \int_0^\infty ds\, s^\gamma \int_0^\infty \frac{d\tau}{\tau}\, p_\zeta^{(2)}(\tau,r,s)\tau^{\frac{\alpha\beta}{2}} \\
      & = \frac{\Gamma(\beta)}{\Gamma(\frac{\alpha\beta}{2})} \cdot C\left(\frac{\alpha\beta}{2},\gamma,\zeta\right) r^{\alpha\beta+\gamma-2\zeta}.
    \end{split}
  \end{align}
  Here we first used \eqref{eq:subordination}, then \eqref{eq:subordinatorpowerintegral}, and then \eqref{eq:defhbetagamma}. This puts the constraints $\gamma>-1$ and $\beta\in(0,\frac{2\zeta-\gamma}{\alpha})$. The proof of \eqref{eq:defhbetagammaalphatransformed} is complete.
  
  \smallskip
  Formula~\eqref{eq:hardypottransformedalpha} follows from the computation in Item (1).
\end{proof}

\begin{proof}[Proof of Theorem~\ref{hbetagammatransformed}]
  \emph{Item (1).} This is obvious.

  \smallskip
  \emph{Item (2).} By Item~(1), Formula~\eqref{eq:hardyremainderagaintransformed} follows from \cite[Theorem~2]{Bogdanetal2016} with $X=\R_+$, $m(dr)=r^{2\zeta}\,dr$, $\mu(dr)=r^\gamma\,dr$ with $\gamma\in(-1,\frac{2\zeta-1-\alpha}{2})$, and $f(t)=t_+^{\beta-1}$ with $\beta=(2\zeta-\gamma-\eta)/\alpha>1$ as in \eqref{eq:choicesigmabeta2resultalphatransformed}.
  Formula \eqref{eq:hardyremainderagaintransformedlimit} follows from taking the limit $t\searrow0$ in \eqref{eq:hardyremainderagaintransformed} by using dominated convergence along with the definition $\lim_{t\searrow0}t^{-1} p_\zeta^{(\alpha)}(t,r,s)=\nu_{\zeta}(r,s)$ and the fact $t^{-1}p_\zeta^{(\alpha)}(t,r,s)\lesssim\nu_{\zeta}(r,s)$ (see Proposition~\ref{potentialexpansionsphericalharmonics}) if the right-hand side of \eqref{eq:hardyremainderagaintransformed} is finite, or---in the opposite case---Fatou's lemma.
\end{proof}

\section{Connection between the forms $\ce_\zeta$ and $\ce^{(\alpha)}$}
\label{s:notation}

In this section, we establish the link between $\ce_\zeta[u]$ and $\ce^{(\alpha)}[[u]_{\ell,m}]$, in order to prove Theorem~\ref{gstransformreformulated} via Theorem~\ref{hbetagammatransformed}.
To that end, we use, for $d\in\N$ and $\ell\in L_d$, the Fourier--Bessel transform $\F_\ell$,
\begin{align}
  \label{eq:deffourierbessel}
  v\mapsto (\F_\ell v)(k) := i^{-\ell}\int_0^\infty dr\, \sqrt{kr}J_{\frac{d-2}{2}+\ell}(kr) v(r),
  \quad k\in\R_+,
\end{align}\index{$\F_\ell$}initially defined for functions $v\in L^1(\R_+,dr)$. Note that the transform depends both on $d$ and $\ell$, but we omit the dimension $d$ from the notation since we regard $d$ as fixed. Note also that this transform is well-defined for all $d$ and $\ell$, for which $(d-1)/2+\ell>-1/2$.
The same arguments used to extend the Fourier transform on $L^1(\R^d)$ to $L^2(\R^d)$ allow one to extend the Fourier--Bessel transform to a unitary operator on $L^2(\R_+,dr)$. By abuse of notation, we denote this extension by $\F_\ell$, too, and observe the symmetry $\F_\ell=\F_\ell^*$, which is apparent from the definition \eqref{eq:deffourierbessel}.
The unitary operator $U:L^2(\R_+,r^{d-1+2\ell}dr)\to L^2(\R_+,dr)$ defined in \eqref{eq:defdoob}, the plane wave expansion \eqref{eq:spherewave2}, and orthonormality of $\{Y_{\ell,m}\}_{\ell,m}$ yield the intertwining relation
\begin{align}
  \label{eq:fourierbesselintertwine2}
  \left([u]_{\ell,m}\right)^\wedge(\xi) = \left(U^*[\F_\ell Uu]_{\ell,m}\right)(\xi),
  \quad \xi\in\R^d,\, u\in L^2(\R_+,r^{d-1+2\ell}dr).
\end{align}

The following lemma  says that the Fourier--Bessel transform diagonalizes $F(|D|)$, when restricted to a fixed $V_{\ell}$ or $V_{\ell,m}$. In the following, we write $F(|D|)^{1/2}$ as short for $|F(|D|)|^{1/2}\sgn(F(|D|))$.

\begin{lemma}
  \label{fourierbessel}
  Let $d\in\N$, $\ell,\ell'\in L_d$, $m\in M_\ell$, and $m'\in M_{\ell'}$. Let $F\in L_\loc^1(\R_+)$ and $Q(F(|D|)):=\{g\in L^2(\R^d):\, \int_{\R^d}|F(|\xi|)||\hat g(\xi)|^2\,d\xi<\infty\}$. Then, for $[u]_{\ell,m},\,[v]_{\ell',m'}\in Q(F(|D|))$,
  \begin{align}
    \label{eq:fourierbessel1}
    \begin{split}
      & \langle |F(|D|)|^{1/2} [v]_{\ell',m'},F(|D|)^{1/2} [u]_{\ell,m}\rangle_{L^2(\R^d)} \\
      & \quad = \delta_{\ell,\ell'}\delta_{m,m'}\langle \F_\ell Uv,F \cdot \F_\ell Uu\rangle_{L^2(\R_+,dr)}
    \end{split}
  \end{align}
  with the unitary operator $U:L^2(\R_+,r^{d-1+2\ell}dr)\to L^2(\R_+,dr)$ defined in \eqref{eq:defdoob}.
\end{lemma}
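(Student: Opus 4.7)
The plan is to reduce the statement to a computation in Fourier space via Plancherel's theorem, use the intertwining relation \eqref{eq:fourierbesselintertwine2} to write the Fourier transforms of $[u]_{\ell,m}$ and $[v]_{\ell',m'}$ explicitly in terms of Fourier--Bessel transforms, and then peel off the angular variables via orthonormality of $\{Y_{\ell,m}\}$.

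First, by spectral calculus applied to $|D|$ (which is self-adjoint), one has $|F(|D|)|^{1/2} F(|D|)^{1/2} = F(|D|)$, so the left-hand side of \eqref{eq:fourierbessel1} equals $\langle [v]_{\ell',m'}, F(|D|)[u]_{\ell,m}\rangle_{L^2(\R^d)}$. Plancherel's theorem, together with the hypothesis $[u]_{\ell,m},[v]_{\ell',m'}\in Q(F(|D|))$, rewrites this as
\begin{equation*}
  \int_{\R^d} F(|\xi|)\,\overline{\widehat{[v]_{\ell',m'}}(\xi)}\,\widehat{[u]_{\ell,m}}(\xi)\,d\xi,
\end{equation*}
the absolute convergence of which follows from Cauchy--Schwarz on $Q(F(|D|))$.

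Second, I invoke the intertwining relation \eqref{eq:fourierbesselintertwine2}, unpacking the notation as $\widehat{[u]_{\ell,m}}(\xi) = w_u(|\xi|)\,|\xi|^\ell Y_{\ell,m}(\omega_\xi)$, where $w_u(k):=k^{-(d-1+2\ell)/2}(\F_\ell Uu)(k)$, and likewise for $v$ with $w_v$, $\ell'$, $m'$. Passing to polar coordinates $\xi = k\omega$ separates the integral as
\begin{equation*}
  \int_0^\infty F(k)\,k^{d-1+\ell+\ell'}\,\overline{w_v(k)}\,w_u(k)\,dk\, \cdot \int_{\bs^{d-1}} \overline{Y_{\ell',m'}(\omega)}\,Y_{\ell,m}(\omega)\,d\omega.
\end{equation*}

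Third, the angular integral yields $\delta_{\ell,\ell'}\delta_{m,m'}$ by orthonormality of the spherical harmonics, providing the Kronecker deltas. In the nonzero case $\ell=\ell'$, $m=m'$, the factor $k^{d-1+2\ell}\,\overline{w_v(k)}\,w_u(k)$ collapses to $\overline{(\F_\ell U v)(k)}\,(\F_\ell Uu)(k)$, exactly reproducing the right-hand side of \eqref{eq:fourierbessel1}. The main (and only) technical issue is justifying the use of Fubini in polar coordinates and ensuring that the Fourier--Bessel factors are genuinely integrable against $F(k)$; this is handled by the definition of $Q(F(|D|))$ and the unitarity of $\F_\ell$ and $U$, which together guarantee that $\F_\ell U u \in L^2(\R_+,dr)$ and that $F \cdot |\F_\ell Uu|^2 \in L^1(\R_+,dk)$.
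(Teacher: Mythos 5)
Your proof is correct and follows essentially the same route as the paper's: Plancherel's theorem, the intertwining relation \eqref{eq:fourierbesselintertwine2}, and the orthonormality of the spherical harmonics, with the Jacobian $k^{d-1+2\ell}$ cancelling the weights from $U$. You simply spell out the polar-coordinate bookkeeping and the Cauchy--Schwarz justification for absolute convergence that the paper leaves implicit.
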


\begin{proof}
  By the orthonormality of $\{Y_{\ell,m}\}_{\ell\in L_d,m\in M_\ell}$, it suffices to consider $\ell=\ell'$ and $m=m'$.
  Then the claim follows from Plancherel's theorem, i.e.,
  \begin{align}
    \begin{split}
      & \langle |F(|D|)|^{1/2} [v]_{\ell',m'},F(|D|)^{1/2} [u]_{\ell,m}\rangle_{L^2(\R^d)}
        = \langle ([v]_{\ell',m'})^\wedge,F\cdot ([u]_{\ell,m})^\wedge \rangle_{L^2(\R^d)},
    \end{split}
  \end{align}
  and the intertwining relation \eqref{eq:fourierbesselintertwine2}.
\end{proof}

For simplicity in what follows, we occassionally identify operators in $L^2$ with their integral kernels. For instance, by a Schur test and \eqref{eq:normalizedalpha}, the maps
$L^2(\R_+,r^{2\zeta}dr)\ni f \mapsto \int_{\R_+}p_\zeta^{(\alpha)}(t,\cdot,s)f(s)\,s^{2\zeta}ds$ and
$L^2(\R^d)\ni f \mapsto \int_{\R^d}P^{(\alpha)}(t,\cdot,y)f(y)\,dy$
define symmetric contractions on $L^2(\R_+,r^{2\zeta}dr)$ and $L^2(\R^d)$, respectively.
In this connection, see also Corollary~\ref{kalfcor} below.

The following proposition says that the kernel $p_\zeta^{(\alpha)}(t,r,s)$ arises through the Fourier--Bessel transform of $\me{-tk^\alpha}$ when $\ell=\zeta-(d-1)/2$.

\begin{proposition}
  \label{representationpzetafourierbessel}
  Let $\alpha\in(0,2]$ and $\zeta\in(-1/2,\infty)$. Then for all $r,s,t>0$, one has
  \begin{align}
    \label{eq:representationpzetafourierbessel}
    \begin{split}
      p_\zeta^{(\alpha)}(t,r,s)
      & = (rs)^{-\zeta} \int_0^\infty dk\, \me{-tk^\alpha}\sqrt{kr}\sqrt{ks}J_{\zeta-\frac12}(kr)J_{\zeta-\frac12}(ks).
    \end{split}
  \end{align}
\end{proposition}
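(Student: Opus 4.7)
The plan is to reduce everything to the case $\alpha=2$, where the claim becomes a classical Hankel-type identity, and then bootstrap to general $\alpha \in (0,2)$ by subordination.

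First I would treat $\alpha = 2$. Here the target is
\[
p_\zeta^{(2)}(t,r,s) \stackrel{?}{=} (rs)^{-\zeta}\int_0^\infty \me{-tk^2}\sqrt{kr}\sqrt{ks}\,J_{\zeta-\frac12}(kr)J_{\zeta-\frac12}(ks)\,dk.
\]
Pulling $\sqrt{rs}$ out of the integral and recalling the definition \eqref{eq:defpheatalpha2}, this is equivalent to Weber's second exponential integral:
\[
\int_0^\infty \me{-tk^2}\, k\, J_\nu(kr)J_\nu(ks)\,dk \;=\; \frac{1}{2t}\exp\!\left(-\frac{r^2+s^2}{4t}\right)I_\nu\!\left(\frac{rs}{2t}\right),
\]
valid for $\re\nu > -1$, $r,s>0$, $t>0$, which is a standard entry in Watson or the Bateman tables; in the present setting it applies with $\nu=\zeta-\tfrac12 > -1$. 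Substituting and multiplying by $(rs)^{-\zeta}$ reproduces \eqref{eq:defpheatalpha2} exactly, settling the case $\alpha=2$.

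For $\alpha \in (0,2)$, I would start from Definition~\ref{defsemigroupptwisealpha},
\[
p_\zeta^{(\alpha)}(t,r,s) = \int_0^\infty p_\zeta^{(2)}(\tau,r,s)\,\sigma_t^{(\alpha/2)}(\tau)\,d\tau,
\]
insert the representation just proved for $p_\zeta^{(2)}(\tau,r,s)$, and apply Fubini. Since every factor is nonneg\-ative (positivity of $\sigma_t^{(\alpha/2)}$, of the heat kernel, and of $I_{\zeta-1/2}$ on $(0,\infty)$), Tonelli's theorem legitimises interchanging the $\tau$- and $k$-integrals without any further integrability check. The inner $\tau$-integral then yields, by the subordination identity \eqref{eq:subordination},
\[
\int_0^\infty \me{-\tau k^2}\,\sigma_t^{(\alpha/2)}(\tau)\,d\tau \;=\; \me{-t k^\alpha},
\]
and the remaining $k$-integral is precisely the right-hand side of \eqref{eq:representationpzetafourierbessel}.

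The only subtle point is ensuring that the Fubini swap is genuinely justified when one wishes to work with signed versions of the integrand later; here, because the $\alpha = 2$ identity already holds as an absolutely convergent equality of positive quantities, no such subtlety arises and Tonelli suffices. I would therefore expect the proof to be short, with the main work being the correct citation of Weber's formula (e.g.\ via \cite[(10.32.18)]{NIST:DLMF} or an equivalent reference) rather than any analytic difficulty.
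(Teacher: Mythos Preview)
Your approach matches the paper's almost exactly: the paper simply cites a tabulated formula for $\alpha=2$ (Prudnikov et al., Formula~(2.12.39.3), i.e.\ Weber's second exponential integral) and then says ``the claim follows from subordination'' for $\alpha<2$. So the overall route is the same.

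There is, however, one slip in your justification of the interchange. After inserting the $k$-integral representation of $p_\zeta^{(2)}(\tau,r,s)$, the integrand of the resulting $(\tau,k)$ double integral contains the product $J_{\zeta-1/2}(kr)J_{\zeta-1/2}(ks)$, which oscillates and is \emph{not} nonnegative; the positivity of the heat kernel and of $I_{\zeta-1/2}$ is irrelevant at that point, because you have already traded the $I$-representation for the $J$-representation. Hence Tonelli does not apply as stated. What you actually need is Fubini with an absolute-integrability check: using $|J_\nu(x)|\lesssim_\nu x^\nu\wedge x^{-1/2}$ together with the Gaussian factor $\me{-\tau k^2}$, one sees that $\int_0^\infty k\,|J_\nu(kr)J_\nu(ks)|\,\me{-\tau k^2}\,dk<\infty$ for each $\tau>0$, with at most polynomial growth/decay in $\tau$; the rapid vanishing of $\sigma_t^{(\alpha/2)}(\tau)$ near $\tau=0$ and its $\tau^{-1-\alpha/2}$ decay at infinity then give finiteness of the iterated absolute integral. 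With that correction the argument is complete.
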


\begin{proof}
  For $\alpha=2$, this integral is tabulated, e.g., in \cite[Formula (2.12.39.3)]{PrudnikovetalVol21988}.
  For $\alpha<2$, the claim follows from subordination \eqref{eq:subordination}.
\end{proof}

We are now in a position to link the quadratic form $\ce_\zeta$ on $L^2(\R_+,r^{2\zeta}dr)$ to the form $\ce^{(\alpha)}$ on $V_{\ell,m}$. Moreover, we show that $\nu_{\zeta}$, defined in \eqref{eq:defnuell1}, equals the radial part of the projection of $\nu$ onto $V_\ell$ with $\ell=\zeta-(d-1)/2\in L_d$; see \eqref{eq:nuellmainthm}.

\begin{proposition}
  \label{relationformheatkernel}
  Let $d\in\N$, $\alpha\in(0,2]$, $\ell\in L_d$, $m\in M_\ell$, and $\zeta=(d-1)/2+\ell$.
  Then the following statements hold. \\
  \textup{(1)} For all $u\in L^2(\R_+,r^{d-1+2\ell}dr)$, we have
  \begin{align}
    \label{eq:linksemigroupsrdrplus}
    \begin{split}
      \langle[u]_{\ell,m},P^{(\alpha)}(t,\cdot,\cdot)[u]_{\ell,m}\rangle_{L^2(\R^d)}
      & = \langle u, p_\zeta^{(\alpha)}(t,\cdot,\cdot)u\rangle_{L^2(\R_+,r^{2\zeta}dr)},
        \quad t>0
    \end{split}
  \end{align}
  and
  \begin{align}
    \label{eq:relationformheatkernel3}
    \ce^{(\alpha)}[[u]_{\ell,m}] = \ce_\zeta[u].
  \end{align}
  \textup{(2)} For all $r,s,t>0$, we have
  \begin{align}
    \label{eq:linksemigroupsrdrpluspointwise}
     p_\zeta^{(\alpha)}(t,r,s)
    = (rs)^{-\ell} \iint\limits_{\bs^{d-1}\times\bs^{d-1}} \overline{Y_{\ell,m}(\omega_x)} Y_{\ell,m}(\omega_y) P^{(\alpha)}(t,r\omega_x,s\omega_y)\,d\omega_x\,d\omega_y.
  \end{align}
  \textup{(3)} Let further $\alpha\in(0,2)$ and $\nu_{\zeta}(r,s)$ be defined in \eqref{eq:defnuell1}. Then for all $r,s>0$ with $r\neq s$, we have
  \begin{align}
    \label{eq:defnuell}
    \begin{split}
      \nu_{\frac{d-1}{2}+\ell}(r,s)
      & = (rs)^{-\ell} \iint\limits_{\bs^{d-1}\times\bs^{d-1}}\overline{Y_{\ell,m}(\omega_x)}Y_{\ell,m}(\omega_y)\nu(r\omega_x,s\omega_y)\,d\omega_x\,d\omega_y.
    \end{split}  
  \end{align}
\end{proposition}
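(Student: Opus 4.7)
I would prove item (2) first from the plane wave / Fourier representation, then deduce (1) by integration in $\omega_x,\omega_y$, and finally (3) by passing to the limit $t\searrow 0$ in (2).

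For (2) with $\alpha=2$, starting from the Fourier representation \eqref{eq:defusualheatkernel}, write $\xi=k\omega_\xi$ in polar coordinates and expand each factor $e^{\pm i\xi\cdot r\omega_x}$, $e^{\mp i\xi\cdot s\omega_y}$ via the plane wave expansion \eqref{eq:spherewave2} as a series in spherical harmonics $Y_{\ell',m'}(\omega_x)$, $Y_{\ell',m'}(\omega_y)$ with radial coefficients proportional to $(kr)^{-(d-2)/2}J_{\ell'+(d-2)/2}(kr)$ and an analogous factor in $ks$. Inserting both expansions into the right-hand side of \eqref{eq:linksemigroupsrdrpluspointwise} and using orthonormality of $\{Y_{\ell',m'}\}$ in $L^2(\bs^{d-1})$ to perform the $d\omega_x$ and $d\omega_y$ integrations collapses both double sums to the single index $(\ell',m')=(\ell,m)$; the surviving integral over $\omega_\xi$ produces $\|Y_{\ell,m}\|_{L^2(\bs^{d-1})}^2=1$, and the remaining one-dimensional integral in $k$ is precisely the right-hand side of \eqref{eq:representationpzetafourierbessel} multiplied by $(rs)^\ell$. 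By Proposition~\ref{representationpzetafourierbessel} this equals $(rs)^\ell p_\zeta^{(2)}(t,r,s)$, which is \eqref{eq:linksemigroupsrdrpluspointwise} for $\alpha=2$. The case $\alpha\in(0,2)$ follows from the subordination formula \eqref{eq:defpheatalpha} applied termwise and Tonelli's theorem, exploiting the same identity for the Gaussian kernel.

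Item (1) is a direct consequence of (2). Writing the inner product on the left of \eqref{eq:linksemigroupsrdrplus} in polar coordinates $x=r\omega_x$, $y=s\omega_y$, substituting $[u]_{\ell,m}(x)=u(r)r^\ell Y_{\ell,m}(\omega_x)$, and using \eqref{eq:linksemigroupsrdrpluspointwise} to carry out the double spherical integral produces the factor $(rs)^\ell p_\zeta^{(\alpha)}(t,r,s)$; combined with the Jacobian $(rs)^{d-1}$ and the two factors $r^\ell s^\ell$ from $[u]_{\ell,m}$, one obtains the reference measure $(rs)^{d-1+2\ell}\,dr\,ds=(rs)^{2\zeta}\,dr\,ds$, giving exactly the right-hand side of \eqref{eq:linksemigroupsrdrplus}. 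The form identity \eqref{eq:relationformheatkernel3} then follows by subtracting \eqref{eq:linksemigroupsrdrplus} from the equality $\|[u]_{\ell,m}\|_{L^2(\R^d)}^2=\|u\|_{L^2(\R_+,r^{2\zeta}dr)}^2$ provided by the isometry \eqref{eq:isometry} together with the normalization \eqref{eq:normalizedalpha}, dividing by $t$, and invoking the defining limits \eqref{eq:fractlaplaceheatkernel} and \eqref{eq:defezeta}. (A quicker alternative is to apply Lemma~\ref{fourierbessel} with $F(k)=1-e^{-tk^\alpha}$ and use Proposition~\ref{representationpzetafourierbessel} in reverse; both routes are formally equivalent.)

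For (3), divide \eqref{eq:linksemigroupsrdrpluspointwise} by $t$ and send $t\searrow 0$. On the left, Proposition~\ref{potentialexpansionsphericalharmonics} yields the pointwise limit $\nu_\zeta(r,s)$ together with the majorization \eqref{eq:defnuell4}. On the right, for fixed $r\neq s$ the integrand $t^{-1}P^{(\alpha)}(t,r\omega_x,s\omega_y)$ converges pointwise in $(\omega_x,\omega_y)$ to $\nu(r\omega_x-s\omega_y)=\ca_{d,-\alpha}|r\omega_x-s\omega_y|^{-d-\alpha}$, and is uniformly in $t$ dominated by a constant multiple of the same quantity thanks to the classical sharp heat-kernel bounds for the subordinated Gaussian. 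Since $Y_{\ell,m}$ is bounded on $\bs^{d-1}$ and the singularity $|r\omega_x-s\omega_y|^{-d-\alpha}$ is integrable on $\bs^{d-1}\times\bs^{d-1}$ whenever $r\neq s$, dominated convergence gives \eqref{eq:defnuell}.

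The main obstacle I anticipate is the careful bookkeeping in step~(2): tracking the powers of $k$, $r$, $s$, the factors $i^\ell$, $(-i)^\ell$ arising from the Bessel expansion, and the prefactor $(2\pi)^{d/2}|\xi x|^{-(d-2)/2}$, and checking that they combine to exactly the integrand $\sqrt{kr}\sqrt{ks}\,J_{\zeta-1/2}(kr)J_{\zeta-1/2}(ks)$ times $(rs)^{-\ell}$ on the right-hand side of \eqref{eq:linksemigroupsrdrpluspointwise}. Using Lemma~\ref{fourierbessel} (which already contains the necessary identifications via $\F_\ell$ and $U$) as an intermediate tool essentially reduces this to verifying the single radial identity in Proposition~\ref{representationpzetafourierbessel}.
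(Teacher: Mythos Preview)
Your proposal is correct and essentially mirrors the paper's argument. The only cosmetic difference is the order: the paper obtains \eqref{eq:linksemigroupsrdrplus} directly from Lemma~\ref{fourierbessel} (with $F(k)=\me{-tk^\alpha}$) together with Proposition~\ref{representationpzetafourierbessel}, rather than first establishing the pointwise identity \eqref{eq:linksemigroupsrdrpluspointwise} and then integrating; but you already note this shortcut as your ``quicker alternative'', and the proofs of (2) and (3) match the paper's exactly (plane wave expansion via Corollary~\ref{kalfcor} and Proposition~\ref{representationpzetafourierbessel} for (2), dominated convergence with the bound $t^{-1}P^{(\alpha)}(t,x,y)\lesssim\nu(x,y)$ for (3)).
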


\begin{proof}
  (1) Formula~\eqref{eq:linksemigroupsrdrplus} follows from Lemma~\ref{fourierbessel} and Proposition~\ref{representationpzetafourierbessel}.
  Recalling the definitions \eqref{eq:fractlaplaceheatkernel} and \eqref{eq:defezeta} of $\ce^{(\alpha)}[[u]_{\ell,m}]$ and $\ce_\zeta[u]$ through the semigroups $P^{(\alpha)}$ and $p_\zeta^{(\alpha)}$, Formula~\eqref{eq:relationformheatkernel3} follows from \eqref{eq:linksemigroupsrdrplus}.
  \\
  (2) Formula \eqref{eq:linksemigroupsrdrpluspointwise} follows from the Fourier representation of $P^{(\alpha)}(t,x,y)$, the plane wave expansion for $\me{ix\cdot\xi}$ (Corollary~\ref{kalfcor}), and the observation \eqref{eq:representationpzetafourierbessel}.
  \\
  (3) Formula~\eqref{eq:defnuell} follows from the pointwise limit $\lim_{t\searrow0}t^{-1}P^{(\alpha)}(t,x,y) = \nu(x,y)$, \eqref{eq:linksemigroupsrdrpluspointwise}, and the dominated convergence theorem to interchange the limit $t\to0$ with the integral over $\bs^{d-1}\times\bs^{d-1}$. The application of the dominated convergence theorem is justified because $t^{-1}P^{(\alpha)}(t,x,y)\lesssim\nu(x,y)<\infty$ (see \cite{Bogdanetal2014}), and because the integration in \eqref{eq:defnuell} is over a bounded set.
\end{proof}

\section{Proof of Theorem~\ref{gstransformreformulated}}
\label{s:proofsgstransformreformulated}

For $\alpha\in(0,2)$, Theorem~\ref{gstransformreformulated} follows from \eqref{eq:relationformheatkernel3} in Proposition~\ref{relationformheatkernel} and \eqref{eq:hardyremainderagaintransformedlimit} in Theorem~\ref{hbetagammatransformed} with $\zeta=(d-1+2\ell)/2$ and $\eta=\ell+\sigma$. Formula~\eqref{eq:nuellmainthm} is the content of Proposition~\ref{relationformheatkernel}(3).

\smallskip
It remains to consider $\alpha=2$. With $h(r)=r^{-\ell-\sigma}$, we have
\begin{align}
  \begin{split}
    |(\nabla [h \cdot u]_{\ell,m})(x)|^2
    & = \left||x|^{-\sigma}u'(|x|)-\sigma|x|^{-\sigma-1}u(|x|)\right|^2|Y_{\ell,m}(\omega_x)|^2 \\
    & \quad + |x|^{-2\sigma-2}|u(|x|)|^2\, |x|^2|\nabla Y_{\ell,m}(\omega_x)|^2.
  \end{split}
\end{align}
Since the eigenvalues of $-\Delta_{\bs^{d-1}}$ are $\ell(d+\ell-2)$, we have
\begin{align}
  \int_{\bs^{d-1}}|x|^2|\nabla Y_{\ell,m}(\omega_x)|^2\,d\omega_x
  = \ell(d+\ell-2).
\end{align}
Thus, recalling $\Phi_\ell(\sigma)=\ell(d+\ell-2)+\sigma(d-\sigma-2)$, it suffices to show
\begin{align}
  \begin{split}
    & \int_0^\infty dr\, r^{d-1}\left[\sigma^2 r^{-2\sigma-2}|u(r)|^2 - 2\sigma \re\left(r^{-2\sigma-1}\overline{u(r)}u'(r)\right)\right] \\
    & \quad = \sigma(d-\sigma-2)\int_0^\infty dr\, r^{d-1-2\sigma}\, \frac{|u(r)|^2}{r^2}.
  \end{split}
\end{align}
Indeed, from an integration by parts, we get
\begin{align}
  \begin{split}
    & -\int_0^\infty dr\, r^{d-1-2\sigma-1}\overline{u(r)}u'(r) \\
    & \quad = \int_0^\infty dr\, r^{d-2-2\sigma}\overline{u'(r)}u(r) + (d-2-2\sigma)\int_0^\infty dr\, r^{d-3-2\sigma}|u(r)|^2
  \end{split}
\end{align}
and therefore
\begin{align}
  \begin{split}
    & \int_0^\infty dr\, r^{d-1}\left[\sigma^2 r^{-2\sigma-2}|u(r)|^2 - 2\sigma \re\left(r^{-2\sigma-1}\overline{u(r)}u'(r)\right)\right] \\
    & \quad = \int_0^\infty dr\, r^{d-3-2\sigma}|u(r)|^2 \cdot\left[\sigma^2 + \sigma(d-2-2\sigma)\right] \\
    & \quad = \sigma(d-\sigma-2)\int_0^\infty dr\, r^{d-1-2\sigma}\, \frac{|u(r)|^2}{r^2},
  \end{split}
\end{align}
as claimed. This concludes the proof of Theorem \ref{gstransformreformulated} for $\alpha=2$. \qed

\appendix

\section{Plane wave expansion}
\label{s:fourierbessel}

We begin with an expansion result for sufficiently smooth functions on $\bs^{d-1}$. See Kalf \cite{Kalf1995} for an extensive review about this and related results.

\begin{proposition}
  \label{kalfreview}
  Let $d,n\in\N$ and $f\in C^{n}(\bs^{d-1})$. Then the Laplace series representation
  \begin{align}
    \label{eq:kalfreview}
    f(\omega) = \sum_{\ell\in L_d}\sum_{m\in M_\ell}\langle Y_{\ell,m},f\rangle_{L^2(\bs^{d-1})}\, Y_{\ell,m}(\omega)
  \end{align}
  {\rm (1)} converges uniformly with respect to $\omega\in\bs^{d-1}$, if $n=\floor{(d-1)/2}\vee1$; and \\
  {\rm (2)} converges absolutely, uniformly with respect to $\omega\in\bs^{d-1}$, if $n=2\floor{(d+3)/4}$.
\end{proposition}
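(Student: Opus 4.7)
My plan is to combine three classical ingredients of spherical harmonic analysis: (a) the addition formula, which yields the uniform identity $\sum_{m\in M_\ell}|Y_{\ell,m}(\omega)|^2 = N_{d,\ell}/|\bs^{d-1}|$ with $N_{d,\ell}\lesssim\ell^{d-2}$ for $\ell\ge1$ being the dimension of the space of degree-$\ell$ spherical harmonics; (b) the spectral identity $-\Delta_{\bs^{d-1}}Y_{\ell,m}=\lambda_\ell Y_{\ell,m}$ with $\lambda_\ell=\ell(\ell+d-2)\sim\ell^2$, so that integration by parts yields $\langle Y_{\ell,m},f\rangle_{L^2(\bs^{d-1})}=\lambda_\ell^{-k}\langle Y_{\ell,m},(-\Delta_{\bs^{d-1}})^k f\rangle_{L^2(\bs^{d-1})}$ whenever $f\in C^{2k}(\bs^{d-1})$; and (c) Parseval's identity on $L^2(\bs^{d-1})$.

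For claim~(2) I would fix $k:=\floor{(d+3)/4}$, so that $2k\leq n$, and estimate, by Cauchy--Schwarz in $m$ combined with (a) and (b),
\[
\sum_{m\in M_\ell}\bigl|\langle Y_{\ell,m},f\rangle Y_{\ell,m}(\omega)\bigr|\lesssim \ell^{(d-2)/2}\lambda_\ell^{-k}\Bigl(\sum_{m\in M_\ell}|\langle Y_{\ell,m},(-\Delta_{\bs^{d-1}})^k f\rangle|^2\Bigr)^{1/2}
\]
uniformly in $\omega\in\bs^{d-1}$. A second Cauchy--Schwarz in $\ell$ together with (c) then controls the $\ell$-sum by $\bigl(\sum_\ell \ell^{d-2-4k}\bigr)^{1/2}\|(-\Delta_{\bs^{d-1}})^k f\|_{L^2(\bs^{d-1})}$; the choice of $k$ ensures $4k>d-1$, so the series converges, giving absolute and uniform convergence.

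For claim~(1) the above estimate is wasteful and essentially forces the stronger smoothness of~(2). My plan is to exploit a dyadic Littlewood--Paley-type decomposition, grouping the harmonics into annuli $\ell\in[2^j,2^{j+1})$ and controlling each block by a \emph{single} Cauchy--Schwarz over the joint index $(\ell,m)$. This saves a factor of order $\ell^{1/2}$ per block relative to the two-step bound used for~(2), and a blockwise Parseval then yields a bound of the form $2^{j((d-1)/2-n)}\|f\|_{C^n(\bs^{d-1})}$ for the $L^\infty$-norm of each block, summable in $j$ whenever $n>(d-1)/2$. For odd $n$ one passes to fractional powers $(-\Delta_{\bs^{d-1}})^{n/2}$, equivalently Bessel-potential Sobolev norms, to justify the formal integration by parts.

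The hard part will be the sharp endpoint $n=\floor{(d-1)/2}\vee 1$ in~(1), at which the $L^2$-based summation just fails to converge by one exponent. Here the classical remedy is to replace crude partial sums by Ces\`{a}ro or Riesz means of appropriate order on $\bs^{d-1}$, which are uniformly bounded on $L^\infty$; an Abel summation then transfers the regularity of $f$ onto the kernel and closes the small gap. For the detailed execution of these delicate summability arguments I would follow the comprehensive review by Kalf~\cite{Kalf1995}.
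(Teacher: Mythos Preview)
The paper does not prove Proposition~\ref{kalfreview} at all: it is stated as a known result, with the sentence ``See Kalf \cite{Kalf1995} for an extensive review about this and related results'' serving as the sole justification. So there is no ``paper's own proof'' to compare against; your proposal already goes well beyond what the paper offers.

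That said, your sketch is reasonable. The argument for part~(2) is the standard one and is correct: with $k=\lfloor(d+3)/4\rfloor$ one has $n=2k$, and the convergence criterion $4k>d-1$ is easily checked case by case (indeed $4\lfloor(d+3)/4\rfloor\ge d$ for all $d\in\N$). For part~(1) you correctly diagnose that the naive two-step Cauchy--Schwarz loses half a derivative and that the sharp endpoint $n=\lfloor(d-1)/2\rfloor\vee1$ requires a summability argument; your plan to pass through Ces\`aro or Riesz means and then refer to Kalf~\cite{Kalf1995} for the details is exactly in the spirit of the paper's own citation. In short, both you and the paper ultimately lean on \cite{Kalf1995} for the delicate part, but you have supplied the easy half of the argument that the paper omits.
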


In the following, let $d\in\N$, $\ell\in L_d$, and $m\in M_\ell$.
For $x,\xi\in\R^d\setminus\{0\}$, we have
\begin{align}
  \frac{1}{(2\pi)^{d/2}}\int_{\bs^{d-1}}d\omega_x \overline{Y_{\ell,m}(\omega_x)}\, \me{ix\cdot\xi}
  = i^\ell\frac{J_{\frac{d-2}{2}+\ell}(|x||\xi|)}{(|x||\xi|)^{(d-2)/2}} \cdot\overline{Y_{\ell,m}(\omega_\xi)}.
\end{align}
This is contained, e.g., in Lapidus \cite{Lapidus1982} for $d=2$, in Messiah \cite[(B.105)]{Messiah1969} for $d=3$, and in Avery and Avery \cite[(2.17), (3.1), (3.25), (3.50), (3.68), (4.24)-(4.25)]{AveryAvery2018} for $d\geq4$. Thus, by Proposition~\ref{kalfreview}, for any fixed $x\in\R^d\setminus\{0\}$, we have the Laplace series representation of $\me{i\xi\cdot x}$, often called \emph{plane wave expansion},
\begin{align}
  \label{eq:spherewave2}
  \begin{split}
    \frac{\me{i a \xi\cdot x}}{(2\pi)^{d/2}}
    & = \sum_{\ell\in L_d}i^\ell \frac{J_{\frac{d-2}{2}+\ell}(|a||x||\xi|)}{(|a||x||\xi|)^{(d-2)/2}}\sum_{m\in M_\ell} \overline{Y_{\ell,m}(\sgn(a)\omega_\xi)}Y_{\ell,m}(\omega_x) \\
    & \quad \text{for all}\ a\in\R\setminus\{0\}, \quad x,\xi\in\R^d\setminus\{0\},
  \end{split}
\end{align}
which is absolutely and uniformly convergent with respect to $\omega_\xi\in\bs^{d-1}$.

Formula \eqref{eq:spherewave2} has the following useful consequence for spherically symmetric Fourier symbols $\ck(\xi)=\ck(|\xi|)$. For simplicity, below we suppose $\ck\in L^1(\R^d)$, but the idea readily generalizes to more general $\ck$ under suitable conditions. 

\begin{corollary}
  \label{kalfcor}
  Let $d\in\N$ and $\ck\in L^1(\R^d)$ be spherically symmetric. Let $K(-i\nabla)=\F^{-1}\ck\F$ be the Fourier multiplier corresponding to $\ck$. Then the integral kernel $K(x-y)$ of $K(-i\nabla)$ has the uniformly and absolutely convergent expansion
  \begin{align}
    \label{eq:kalfcor1}
    K(x-y)
    = \sum_{\ell\in L_d}\sum_{m\in M_\ell} K_{\ell}(|x|,|y|)Y_{\ell,m}(\omega_x)\,\overline{Y_{\ell,m}(\omega_y)}
  \end{align}
  for all $x,y\in\R^d\setminus\{0\}$ with $x\neq y$, with coefficients
  \begin{align}
    \label{eq:kalfcor0}
      K_{\ell}(r,s)
      & := (rs)^{-(d-2)/2}\int_0^\infty k\, \ck(k) J_{\frac{d-2}{2}+\ell}(kr)J_{\frac{d-2}{2}+\ell}(ks)\,dk \\
      \label{eq:kalfcor2}
      & \ = \int_{\bs^{d-1}\times\bs^{d-1}}d\omega_x\,d\omega_y\, \overline{Y_{\ell,m}(\omega_x)}\, Y_{\ell,m}(\omega_y) K(r\omega_x-s\omega_y),
    \quad m\in M_\ell.
  \end{align}
\end{corollary}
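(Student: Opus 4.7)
The plan is to insert the plane wave expansion \eqref{eq:spherewave2} into the Fourier representation of the kernel and collapse the resulting double angular sum via the orthonormality of $\{Y_{\ell,m}\}$ on $\bs^{d-1}$.

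Since $\ck\in L^1(\R^d)$, Fourier inversion yields $K(x-y)=(2\pi)^{-d}\int_{\R^d}\ck(\xi)\me{i(x-y)\cdot\xi}\,d\xi$. Passing to polar coordinates $\xi=k\omega_\xi$ and factoring $\me{ik\omega_\xi\cdot(x-y)}=\me{ik\omega_\xi\cdot x}\cdot\overline{\me{ik\omega_\xi\cdot y}}$, I would apply \eqref{eq:spherewave2} to the first factor and the complex conjugate of \eqref{eq:spherewave2} to the second (noting that the Bessel functions are real on $\R_+$, so only the spherical harmonic factors get conjugated). Both series converge absolutely and uniformly in $\omega_\xi\in\bs^{d-1}$ by Proposition~\ref{kalfreview}, so I can multiply them and integrate termwise over $\omega_\xi$. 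In the resulting quadruple sum over $(\ell,m,\ell',m')$, only the diagonal terms $(\ell,m)=(\ell',m')$ survive by orthonormality, the phase prefactors $i^\ell(-i)^\ell$ cancel, and the radial factor simplifies via $k^{d-1}/[(k|x|)(k|y|)]^{(d-2)/2}=k\,(|x||y|)^{-(d-2)/2}$; the $(2\pi)^d$ from the two expansions cancels the $(2\pi)^{-d}$ from inversion. This delivers \eqref{eq:kalfcor1} with the coefficients $K_\ell(|x|,|y|)$ in the form \eqref{eq:kalfcor0}.

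The alternative expression \eqref{eq:kalfcor2} then follows at once from \eqref{eq:kalfcor1}: multiply both sides by $\overline{Y_{\ell,m}(\omega_x)}\,Y_{\ell,m}(\omega_y)$, integrate over $\bs^{d-1}\times\bs^{d-1}$, and apply orthonormality once more on each sphere. The independence of $K_\ell(r,s)$ from $m\in M_\ell$ is then a direct manifestation of the rotational invariance of $K$.

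The main technical obstacle is justifying the interchange of the radial integration with the series over $\ell$. The $\omega_\xi$-integration is harmless because the plane wave expansion converges absolutely and uniformly on the compact sphere. For the $k$-integration against $k^{d-1}\ck(k)\,dk$, I would first localize to a compact shell $[c_1,c_2]\subset(0,\infty)$, on which the Bessel factors are uniformly bounded and $\int_{c_1}^{c_2}k^{d-1}|\ck(k)|\,dk<\infty$, so that Fubini applies directly; then take $c_1\downarrow0$ and $c_2\uparrow\infty$, using $\ck\in L^1(\R^d)$ together with the standard estimates $J_\nu(z)\lesssim(z/2)^\nu/\Gamma(\nu+1)$ near zero and $|J_\nu(z)|\lesssim z^{-1/2}$ at infinity to dominate the partial sums and pass to the limit in each term. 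The excluded sets $\{x=0\}$, $\{y=0\}$, and $\{x=y\}$ enter only because $\omega_x,\omega_y$ need to be defined and because the termwise convergence of the expansion is stated pointwise off the diagonal.
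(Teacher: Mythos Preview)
Your proposal is correct and follows essentially the same route as the paper: Fourier inversion, insertion of the plane wave expansion \eqref{eq:spherewave2} for each of the factors $\me{ik\omega_\xi\cdot x}$ and $\overline{\me{ik\omega_\xi\cdot y}}$, collapse of the double sum via orthonormality of $\{Y_{\ell,m}\}$ over $\bs^{d-1}$, and then \eqref{eq:kalfcor2} by integrating \eqref{eq:kalfcor1} against $\overline{Y_{\ell,m}(\omega_x)}\,Y_{\ell,m}(\omega_y)$. The paper's proof is in fact terser than yours---it records the computation without dwelling on the interchange of the $k$-integral with the $\ell$-sum---so your discussion of the localization argument and Bessel-function bounds is additional rigor rather than a different strategy.
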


\begin{proof}
  Formula \eqref{eq:kalfcor1} follows from Proposition~\ref{kalfreview} since
  \begin{align}
    \begin{split}
      & K(x-y)
      = \frac{1}{(2\pi)^d}\int_{\R^d}\me{i\xi\cdot(x-y)} \ck(\xi)\,d\xi \\
      & \quad = \sum_{\ell,\ell';m,m'} Y_{\ell,m}(\omega_x)\,\overline{Y_{\ell',m'}(\omega_y)}\, \int_0^\infty k^{d-1}\, \ck(k) \frac{J_{\frac{d-2}{2}+\ell}(k|x|)}{(k|x|)^{(d-2)/2}} \frac{J_{\frac{d-2}{2}+\ell'}(k|y|)}{(k|y|)^{(d-2)/2}}\,dk \\
      & \qquad \times i^{\ell-\ell'} \int_{\bs^{d-1}}d\omega_\xi\, \overline{Y_{\ell,m}(\omega_\xi)}\,Y_{\ell',m'}(\omega_\xi) \\
      & \quad = \sum_{\ell,m}Y_{\ell,m}(\omega_x)\,\overline{Y_{\ell,m}(\omega_y)} (|x||y|)^{-(d-2)/2}\int_0^\infty k\, \ck(k) J_{\frac{d-2}{2}+\ell}(kr)J_{\frac{d-2}{2}+\ell}(ks)\,dk.
    \end{split}
  \end{align}
  Formula~\eqref{eq:kalfcor2} follows from this and the orthonormality of $\{Y_{\ell,m}\}_{\ell,m}$.
\end{proof}

\section{Properties of the $\alpha/2$-stable subordination density}
\label{s:propertiessubordinator}
For $\alpha\in(0,2)$ and $t>0$ the function $[0,\infty)\ni\lambda\mapsto \me{-\lambda^{\alpha/2}}$ is completely monotone and, therefore, is the Laplace transform
\begin{align}
  \label{eq:laplacetransform}
  \me{-t\lambda^{\alpha/2}} = \int_0^\infty \me{-\tau\lambda}\, \sigma_t^{(\alpha/2)}(\tau)\,d\tau
\end{align}
of a 
non-negative density $\sigma_t^{(\alpha/2)}(\tau)\,d\tau$, which is called the $\frac{\alpha}{2}$-stable subordination transition density. We now collect some properties of $\sigma_t^{(\alpha/2)}(\tau)$ needed for our analysis. See, e.g., \cite{Pollard1946,Bergstrom1952,IbragimovLinnik1971,Schillingetal2012} for references.

\smallskip
\noindent
(1) The measure $\sigma_t^{(\alpha/2)}(\tau)\,d\tau$ is normalized, that is,
\begin{align}
  \label{eq:subordinatornormalized}
  \int_0^\infty \sigma_t^{(\alpha/2)}(\tau)\,d\tau = 1, \quad t>0.
\end{align}
\\
\smallskip
(2) By \cite[(4)--(5)]{Pollard1946} (see also \cite[(2)]{Bergstrom1952}), we have the convergent expansions
\begin{align}
  \label{eq:subordinatorexplicit}
  \begin{split}
    \sigma_t^{(\frac\alpha2)}(\tau)
    & = \frac{t^{-\frac2\alpha}}{\pi} \int_0^\infty \me{-\frac{\tau}{t^{2/\alpha}}u}\exp\left(-u^{\alpha/2}\cos\left(\frac{\alpha\pi}{2}\right)\right)\cdot \sin\left(u^{\alpha/2} \sin\left(\frac{\alpha\pi}{2}\right)\right)\,du \\
    & = \frac{1}{\pi}\sum_{k=1}^\infty (-1)^{k+1}\frac{\Gamma(k \alpha/2+1)}{k!}\cdot\frac{t^k}{\tau^{1+k\alpha/2}}\cdot\sin\left(\frac{\pi k \alpha}{2}\right) \geq0, \quad t,\tau>0.
  \end{split}
\end{align}
In particular,
\begin{align}
  \label{eq:subordinatorquotientexplicit}
  \lim_{t\searrow0}\frac{\sigma_t^{(\frac\alpha2)}(\tau)}{t} = \frac{\Gamma(\alpha/2+1)\,\sin(\pi\alpha/2)}{\pi\,\tau^{1+\alpha/2}}, \quad \tau>0.
\end{align}
The second line in \eqref{eq:subordinatorexplicit} was first derived (formally) by Humbert \cite{Humbert1945} and later proved by Pollard \cite[(4)]{Pollard1946}, see also Bergstr\"om \cite{Bergstrom1952} and \cite{Skorohod1954,Skorohod1961,Gawronski1988,MontrollBendler1984,MontrollWest1979} for further classic treatments, as well as \cite[(5.18)-(5.19)]{Bogdanetal2009} and \cite[(A.15)-(A.19)]{PaulBaschnagel2013} (and the references therein) for textbook references.

\smallskip
\noindent
(3) From \eqref{eq:subordinatorexplicit}, we read off the scaling
\begin{align}
  \label{eq:subordinatorscaling}
  t^{2/\alpha} \sigma_t^{(\alpha/2)}(t^{2/\alpha}\tau) = \sigma_1^{(\alpha/2)}(\tau).
\end{align}
\smallskip
(4) The density function $\sigma_t^{(\alpha/2)}(\tau)$ is a convolution semigroup \cite[p.~48--49]{Schillingetal2012}, i.e.,
\begin{align}
  \label{eq:subordinatorconvolution}
  \begin{split}
    \int_0^\infty d\tau \int_0^\infty d\tau'\, \sigma_t^{(\alpha/2)}(\tau)\, \sigma_{t'}^{(\alpha/2)}(\tau')\, f(\tau+\tau')
    & = \int_0^\infty d\tau\, (\sigma_t^{(\alpha/2)}\ast\sigma_{t'}^{(\alpha/2)})(\tau)f(\tau) \\
    & = \int_0^\infty d\tau\, \sigma_{t+t'}^{(\alpha/2)}(\tau)f(\tau), \quad f\in L^1.
  \end{split}
\end{align}
\\
\smallskip
(5) Finally, we recall sharp estimates for $\sigma_t^{(\alpha/2)}(\tau)$. From \eqref{eq:subordinatorexplicit},
\begin{align}
  \label{eq:subordinatorupperbounds}
  \sigma_{1}^{(\alpha/2)}(\tau)
  \sim \tau^{-1-\alpha/2}, \quad \tau\gtrsim1.
\end{align}
On the other hand,
\begin{align}
  \label{eq:subordinatorsmalltimes}
  \sigma_{1}^{(\alpha/2)}(\tau)
  \sim \sqrt{\frac{(\alpha/2)^{1/(1-\alpha/2)}}{2\pi(1-\alpha/2)}} \cdot \frac{\exp\left(-(1-\frac\alpha2)\left(\frac{\alpha}{2\tau}\right)^{\alpha/(2-\alpha)}\right)}{\tau^{(2-\alpha/2)/(2-\alpha)}}, \quad \tau\lesssim1.
\end{align}
This appeared first in Ibragimov and Linnik \cite[Theorem~2.4.6]{IbragimovLinnik1971} (see also their formula (2.4.1) which is the second line in \eqref{eq:subordinatorexplicit}) and Hawkes \cite[Lemma 1]{Hawkes1971}, and was recently rederived by Ercolani, Jansen, and Ueltschi \cite[(4.6)-(4.8), Theorem 4.4]{Ercolanietal2019} and Grzywny, Lezaj, and Trojan \cite[Theorems A and 4.17]{Grzywnyetal2023} in greater generality. In particular, $\sigma_{1}^{(\alpha/2)}(\tau)$ vanishes at $\tau=0$ faster than any power of $\tau$, which can also be derived from the first line in \eqref{eq:subordinatorexplicit} and a non-stationary phase computation. We summarize Formulae \eqref{eq:subordinatorupperbounds}--\eqref{eq:subordinatorsmalltimes} as follows.

\begin{proposition}
  \label{subordinatorbounds}
  Let $\alpha\in(0,2)$ and $D(\alpha):=(1-\frac\alpha2)\left(\frac{\alpha}{2}\right)^{\alpha/(2-\alpha)}$. Then,
  \begin{align}
    \label{eq:subordinatorbounds}
    \begin{split}
      \sigma_{1}^{(\alpha/2)}(\tau)
      & \sim_{\alpha} \frac{\exp\left(-D(\alpha)\cdot\tau^{-\alpha/(2-\alpha)}\right)}{\tau^{(2-\alpha/2)/(2-\alpha)}}\one_{\tau<1} + \tau^{-1-\alpha/2}\one_{\tau>1}, \quad \tau>0
    \end{split}
  \end{align}
  and
  \begin{align}
    \label{eq:subordinatorbounds2}
    \begin{split}
      \frac{\sigma_t^{(\alpha/2)}(\tau)}{t}
      & \sim_\alpha \frac{t^{\frac{\alpha-1}{2-\alpha}}\,\exp\left(-D(\alpha)\left(\frac{t^{2/\alpha}}{\tau}\right)^{\alpha/(2-\alpha)}\right)}{\tau^{(2-\alpha/2)/(2-\alpha)}}\one_{\tau<t^{2/\alpha}} + \tau^{-1-\frac\alpha2}\one_{\tau>t^{2/\alpha}} \\
      & \lesssim_\alpha \tau^{-1-\frac\alpha2}, \quad t,\tau>0.
    \end{split}
  \end{align}
\end{proposition}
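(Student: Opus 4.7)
The plan is to deduce Proposition~\ref{subordinatorbounds} from the two asymptotic regimes recorded in \eqref{eq:subordinatorupperbounds}--\eqref{eq:subordinatorsmalltimes} and the scaling identity \eqref{eq:subordinatorscaling}. The argument is a matter of careful bookkeeping; no new analytic input is required.

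First, I would derive \eqref{eq:subordinatorbounds} by verifying that the two expressions on its right-hand side match the known asymptotics \eqref{eq:subordinatorupperbounds} for large $\tau$ and \eqref{eq:subordinatorsmalltimes} for small $\tau$, and that they are both $\sim_\alpha 1$ at the transition point $\tau=1$ (so the two-part expression is comparable to $\sigma_1^{(\alpha/2)}(\tau)$ uniformly in $\tau>0$, up to a harmless adjustment of the constant in the exponent, which is absorbed by the symbol $\sim_\alpha$). Since $\sigma_1^{(\alpha/2)}$ is continuous and strictly positive on compact subsets of $(0,\infty)$, matching at $\tau=1$ is automatic.

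Next, to pass from $\sigma_1^{(\alpha/2)}$ to $\sigma_t^{(\alpha/2)}/t$, I would apply the scaling relation \eqref{eq:subordinatorscaling}, which yields
\begin{equation*}
  \frac{\sigma_t^{(\alpha/2)}(\tau)}{t}
  = t^{-1-2/\alpha}\, \sigma_1^{(\alpha/2)}\bigl(t^{-2/\alpha}\tau\bigr),
\end{equation*}
and then plug in \eqref{eq:subordinatorbounds}. The condition $t^{-2/\alpha}\tau\lessgtr1$ translates to $\tau\lessgtr t^{2/\alpha}$. On the regime $\tau>t^{2/\alpha}$, the exponents of $t$ cancel exactly, giving $\tau^{-1-\alpha/2}$. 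On the regime $\tau<t^{2/\alpha}$, substituting $t^{-2/\alpha}\tau$ into the small-argument bound and collecting powers of $t$ yields $t^{(\alpha-1)/(2-\alpha)}\tau^{-(2-\alpha/2)/(2-\alpha)}$, and the exponential term becomes $\exp\bigl(-D(\alpha)(t^{2/\alpha}/\tau)^{\alpha/(2-\alpha)}\bigr)$, as asserted in the first line of \eqref{eq:subordinatorbounds2}.

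Finally, for the uniform upper bound $\sigma_t^{(\alpha/2)}(\tau)/t\lesssim_\alpha\tau^{-1-\alpha/2}$ in the second line of \eqref{eq:subordinatorbounds2}, the regime $\tau>t^{2/\alpha}$ is immediate from the above. For $\tau<t^{2/\alpha}$, write $\tau=ut^{2/\alpha}$ with $u\in(0,1)$; the ratio of the small-$\tau$ bound to $\tau^{-1-\alpha/2}$ is then
\begin{equation*}
  u^{\alpha(1-\alpha)/(2(2-\alpha))}\,\exp\bigl(-D(\alpha)\,u^{-\alpha/(2-\alpha)}\bigr),
\end{equation*}
a function of $u\in(0,1)$ only. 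The polynomial factor is bounded on $(0,1)$ when $\alpha\le1$, while for $\alpha>1$ the exponential decay as $u\searrow0$ overwhelms the negative power of $u$; in either case the ratio is $\lesssim_\alpha 1$. The only subtlety is this last case distinction, but it is settled by the standard observation that $e^{-cy}y^N\lesssim 1$ for $y\ge0$ and any $N\ge0$, $c>0$. This completes the plan. \qed
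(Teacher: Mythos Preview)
Your proposal is correct and follows exactly the paper's approach: the paper presents this proposition merely as a summary of the cited asymptotics \eqref{eq:subordinatorupperbounds}--\eqref{eq:subordinatorsmalltimes} together with the scaling \eqref{eq:subordinatorscaling}, and you have simply written out the bookkeeping (including the uniform bound on $\tau<t^{2/\alpha}$) that the paper leaves implicit.
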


\section{Auxiliary computations}
\label{s:omitted}

\subsection{Proof of \eqref{eq:normalizedalpha} for $\alpha=2$}
\label{s:normalized}
Writing $\beta:=r/(2t)^{1/2}$, we obtain
\begin{align}
  \begin{split}
    & \int_0^\infty ds\, s^{2\zeta} p_\zeta^{(2)}(t,r,s)
    = (2t)^{-\frac12}r^{-\zeta}\int_0^\infty ds\, s^{\zeta} \cdot \sqrt{\frac{rs}{2t}} \me{-\frac{r^2+s^2}{4t}} \cdot I_{\zeta-1/2}\left(\frac{rs}{2t}\right) \\
    & \quad = \beta^{-2\zeta-1}\int_0^\infty ds\, s^{\zeta+1/2} \exp\left(-\frac{\beta^2+s^2/\beta^2}{2}\right) \cdot I_{\zeta-1/2}(s)
    = 1
  \end{split}
\end{align}
by scaling $s\mapsto s\cdot 2t/r$. The last equality follows from \cite[(10.43.23)]{NIST:DLMF}, i.e.,
\begin{align}
  \int_0^\infty t^{\nu+1} \exp(-p^2t^2) \cdot I_{\nu}(bt)\,dt
  = \frac{b^\nu}{(2p^2)^{\nu+1}} \exp\left(\frac{b^2}{4p^2}\right)
\end{align}
for $\re(\nu)>-1$ and $\re(p^2),b>0$. Here $b=1$, $\nu=\zeta-1/2$, and $p^2=1/(2\beta^2)$.

\subsection{Proof of \eqref{eq:chapman} for $\alpha=2$}
\label{s:chapman}

We recall \cite[(10.43.28)]{NIST:DLMF}, i.e.,
\begin{align*}
  \int_0^\infty z\,\me{-p^2z^2}I_\rho(az)I_\rho(bz)\,dz
  = \frac{1}{2p^2}\exp\left(\frac{a^2+b^2}{4p^2}\right)I_\rho\left(\frac{ab}{2p^2}\right),
  \quad \rho>-1, \quad \re(p^2)>0.
\end{align*}
Using this formula with $\rho=\zeta-1/2$, $p=\sqrt{\frac{t+t'}{4tt'}}$, $a=\frac{r}{2t}$, and $b=\frac{s}{2t'}$, we obtain
\begin{align*}
  & \int_0^\infty p_\zeta^{(2)}(t,r,z)p_{\zeta}^{(2)}(t',z,s)z^{2\zeta}\,dz \\
  & \ = \frac{(rs)^{1/2-\zeta}}{4tt'}\exp\left(-\frac{r^2}{4t}-\frac{s^2}{4t'}\right) \int_0^\infty z\, \exp\left(-z^2\cdot\frac{t+t'}{4tt'}\right)I_{\zeta-1/2}\left(\frac{rz}{2t}\right) I_{\zeta-1/2}\left(\frac{sz}{2t'}\right)\,dz \\
  & \ = \frac{(rs)^{1/2-\zeta}}{2(t+t')}\exp\left(-\frac{r^2+s^2}{4(t+t')}\right)\cdot I_{\zeta-1/2}\left(\frac{rs}{2(t+t')}\right)
    = p_\zeta^{(2)}(t+t',r,s),
\end{align*}
as desired.

\subsection{Proof of \eqref{eq:defnuell2}}
\label{s:selfdomconvsubordinatordensity}

For $r,s>0$ with $r\neq s$, we will verify that
\begin{align}
  \begin{split}
    & I_\zeta^{(\alpha)}(r,s)
      := \int_0^\infty \frac{\Gamma(\tfrac\alpha2+1)\sin(\tfrac{\pi\alpha}{2})}{\pi\,\tau^{1+\alpha/2}}\cdot \frac{(rs)^{\frac12-\zeta}}{2\tau}\exp\left(-\frac{r^2+s^2}{4\tau}\right)I_{\zeta-\frac12}\left(\frac{rs}{2\tau}\right)\,d\tau
  \end{split}
\end{align}
equals the right-hand side of \eqref{eq:defnuell2}. By reflecting $\tau\mapsto1/\tau$, it suffices to compute
\begin{align}
  \int_0^\infty \frac{d\tau}{\tau}\, \tau^{1+\alpha/2} \cdot \exp\left(-\tau\, \frac{r^2+s^2}{4}\right)I_{\zeta-\frac12}\left(\tau\cdot \frac{rs}{2}\right)\,d\tau.
\end{align}
To that end, we use \cite[(2.15.3.2)]{PrudnikovetalVol21988}, i.e.,
\begin{align}
  \begin{split}
    & \int_0^\infty \frac{d\tau}{\tau}\, \tau^{\tilde\alpha}\,\me{-p\tau}\,I_\nu(c\tau) \\
    & \quad = p^{-\tilde\alpha-\nu}\, \left(\frac{c}{2}\right)^{\nu}\, \frac{\Gamma(\nu+\tilde\alpha)}{\Gamma(\nu+1)} \, _2F_1\left(\frac{\tilde\alpha+\nu}{2}, \frac{\tilde\alpha+\nu+1}{2}; \nu+1; \frac{c^2}{p^2}\right)
  \end{split}
\end{align}
with $\re(\tilde\alpha+\nu)>0$, $|\re(c)|<\re(p)$.
In our situation, $p=(r^2+s^2)/4$, $c=rs/2$, $\tilde\alpha=1+\alpha/2$, and $\nu=\zeta-1/2>-1$. Thus, the conditions of \cite[(2.15.3.2)]{PrudnikovetalVol21988} are satisfied and we obtain
\begin{align}
  \begin{split}
    I_\zeta^{(\alpha)}(r,s)
    & = 2^{1+\alpha} \frac{\Gamma\left(\frac\alpha2+1\right)\,\sin\left(\frac{\pi\alpha}{2}\right)}{\pi} \, \frac{\Gamma(\zeta+\frac\alpha2+\frac12)}{\Gamma(\zeta+\frac12)}\, (r^2+s^2)^{-(\zeta+\frac\alpha2+\frac12)} \\
    & \quad \times\, _2F_1\left(\frac{\zeta+\alpha/2+1/2}{2}, \frac{\zeta+\alpha/2+3/2}{2}; \zeta+\frac12; \frac{4(rs)^2}{(r^2+s^2)^2}\right),
  \end{split}
\end{align}
as desired.

\subsection{Proof of \eqref{eq:timeshiftconstantaux1}}
\label{s:timeshiftconstantaux1}

To prove \eqref{eq:timeshiftconstantaux1}, we scale $s\mapsto 2ts$ and obtain
\begin{align}
  \label{eq:timeshiftconstantaux1aux1}
  \begin{split}
    & \int_0^\infty ds\, s^{\gamma-\zeta} \cdot \frac{s^{\frac12}}{2t} \me{-\frac{s^2+1}{4t}} I_{\zeta-\frac12}\left(\frac{s}{2t}\right)
    = \me{-\frac{1}{4t}}(2t)^{\frac12+\gamma-\zeta}\int_0^\infty ds\, \me{-ts^2} s^{\gamma-\zeta+\frac12}I_{\zeta-\frac12}(s).
  \end{split}
\end{align}
Now we use \cite[(10.25.2)]{NIST:DLMF}, i.e.,
\begin{align}
  \label{eq:besseliseries}
  I_{\zeta-1/2}(\tau)
  = \left(\frac{\tau}{2}\right)^{\zeta-1/2} \sum_{k=0}^\infty \frac{(\tau^2/4)^k}{k!\Gamma(\zeta+1/2+k)}, \quad \tau\in\C\setminus(-\infty,0],
\end{align}
and obtain, for $\gamma>-1$,
\begin{align}
  \int_0^\infty ds\, \me{-ts^2} s^{\gamma-\zeta+\frac12}I_{\zeta-1/2}(s)
  = 2^{-(\zeta+1/2)}t^{-(\gamma+1)/2}\sum_{k=0}^\infty \frac{2^{-2k}\Gamma\left(\frac{\gamma+1+2k}{2}\right)t^{-k}}{k! \Gamma\left(\zeta+\frac12+k\right)}.
\end{align}
Therefore, the left-hand side of \eqref{eq:timeshiftconstantaux1aux1} is
\begin{align}
  \begin{split}
    & \me{-\frac{1}{4t}}(2t)^{\frac12+\gamma-\zeta} \cdot 2^{-(\zeta+\frac12)}t^{-(\gamma+1)/2}\sum_{k=0}^\infty \frac{2^{-2k}\Gamma\left(\frac{\gamma+1+2k}{2}\right)t^{-k}}{k! \Gamma\left(\zeta+\frac12+k\right)} \\
    & \quad = 2^{\gamma-2\zeta}t^{\frac12(\gamma-2\zeta)} \cdot \me{-\frac{1}{4t}}\sum_{k=0}^\infty \frac{\Gamma\left(\frac{\gamma+1+2k}{2}\right)}{k! \Gamma\left(\zeta+\frac12+k\right)}\left(\frac{1}{4t}\right)^k.
  \end{split}
\end{align}
Thus, it suffices to show
\begin{align}
  \label{eq:timeshiftconstantaux1aux2}
  \begin{split}
    & \me{-\frac{1}{4t}}\sum_{k=0}^\infty \frac{\Gamma\left(\frac{\gamma+1+2k}{2}\right)}{k! \Gamma\left(\zeta+\frac12+k\right)}\left(\frac{1}{4t}\right)^k
      = \Gamma \left(\frac{\gamma+1}{2}\right) \cdot \, _1{\tilde F}_1\left(\frac{2\zeta-\gamma}{2};\zeta+\frac12;-\frac{1}{4t}\right).
  \end{split}
\end{align}
Using \cite[(13.2.4), (13.2.39)]{NIST:DLMF}, i.e., $_1\tilde F_1(a;b;z)=\,_1\tilde F_1(b-a;b;-z)\me{z}$ for all $z\in\C$, we see that the right-hand side equals
\begin{align}
  \label{eq:timeshiftconstantaux1aux3}
  \Gamma \left(\frac{1}{2} (\gamma+1)\right) \cdot \, _1{\tilde F}_1\left(\frac{1}{2}(\gamma+1);\zeta+\frac12;\frac{1}{4t}\right)\me{-1/(4t)}.
\end{align}
Combining this with \eqref{eq:timeshiftconstantaux1aux2} shows that it suffices to prove
\begin{align}
  \label{eq:timeshiftconstantaux1aux4}
  \begin{split}
    & \sum_{k=0}^\infty \frac{\Gamma\left(\frac{\gamma+1+2k}{2}\right)}{k! \Gamma\left(\zeta+\frac12+k\right)}\left(\frac{1}{4t}\right)^k
      = \Gamma \left(\frac{1}{2}(\gamma+1)\right) \cdot \, _1{\tilde F}_1\left(\frac{1}{2}(\gamma+1);\zeta+\frac12;\frac{1}{4t}\right).
  \end{split}
\end{align}
To this end, we recall \cite[(13.2.2)]{NIST:DLMF}
\begin{align}
  _1\tilde F_1(a;b;z)
  = \frac{1}{\Gamma(a)}\sum_{k=0}^\infty\frac{\Gamma(a+k)}{k!\Gamma(b+k)}z^k,
  \quad z\in\C.
\end{align}
Thus, the right-hand side of \eqref{eq:timeshiftconstantaux1aux4} is
\begin{align}
  \sum_{k=0}^\infty\frac{\Gamma(\frac{\gamma+1+2k}{2})}{k!\Gamma(\zeta+\frac12+k)}\left(\frac{1}{4t}\right)^k,
\end{align}
which indeed equals the left-hand side of \eqref{eq:timeshiftconstantaux1aux4}.

\subsection{Proof of \eqref{eq:timeshiftconstantaux2}}
\label{s:timeshiftconstantaux2}

To prove \eqref{eq:timeshiftconstantaux2}, it suffices to show the last line in
\begin{align}
  \label{eq:timeshiftconstantaux2aux1}
  \scriptsize
  \begin{split}
    & \int_0^\infty \frac{dt}{t}\, t^{\beta} \cdot 2^{\gamma-2\zeta} t^{\frac12(\gamma-2\zeta)}\cdot \Gamma \left(\frac{1}{2}(\gamma+1)\right) \cdot \, _1\tilde{F}_1\left(\frac{1}{2}(2\zeta-\gamma); \zeta+\frac12;-\frac{1}{4t}\right) \\
    & \quad = 2^{\gamma-2\zeta} \Gamma\left(\frac{1}{2}(\gamma+1)\right) \int_0^\infty \frac{dt}{t} t^{\beta-\zeta+\frac{\gamma}{2}} \cdot \, _1\tilde{F}_1\left(\frac{1}{2}(2\zeta-\gamma);\zeta+\frac12;-\frac{1}{4t}\right) \\
    & \quad = 2^{-2\beta} \cdot \Gamma \left(\frac{1}{2}(\gamma+1)\right) \int_0^\infty \frac{dt}{t} t^{\zeta-\beta-\frac{\gamma}{2}} \cdot \, _1\tilde{F}_1\left(\frac{1}{2}(2\zeta-\gamma);\zeta+\frac12;-t\right) \\
    & \quad = \frac{2^{-2(\beta+1)} \Gamma (\beta ) \Gamma \left(\frac{1}{2}(\gamma+1)\right) \Gamma \left(\frac{1}{2}(2\zeta-2\beta-\gamma)\right)}{\Gamma \left(\frac{1}{2}(2\zeta-\gamma)\right) \Gamma \left(\frac{1}{2}(2\beta+\gamma+1)\right)}.
  \end{split}
\end{align}
Here we substituted $t\mapsto t^{-1}$ and then $t\mapsto4t$ to deduce the third from the second line. To prove the last line, we use \cite[(13.10.10)]{NIST:DLMF}, i.e.,
\begin{align}
  \label{eq:intmonomialconfluenthypergeometric}
  \int_0^\infty \frac{dt}{t}t^\delta\ _1\tilde{F}_1(a;b;-t)\,dt
  = \frac{\Gamma(a-\delta)\Gamma(\delta)}{\Gamma(a)\Gamma(b-\delta)},
  \quad 0<\re(\delta)<\re(a),
\end{align}
and set
$\delta=\frac12(2\zeta-2\beta-\gamma)$,
$a=\frac{1}{2}(2\zeta-\gamma)$, and
$b=\zeta+\frac12$.
Thus, our parameters must satisfy
$0<2\zeta-2\beta-\gamma<\zeta-\gamma$, that is,
$\beta\in(0,(2\zeta-\gamma)/2)$.
This ends the proof.

\printindex


\def\cprime{$'$}

\end{document}